\newcommand\ylw{\Yfillcolour{yellow}}
\newcommand\ygreen{\Yfillcolour{green!80!black}}
\newcommand\ywhite{\Yfillcolour{white}}
\newcommand\rouge{\textcolor{red}}
\newcommand\bleu{\textcolor{blue}}
\newcommand\vertfonce{\textcolor{green!70!black}}
\theoremstyle{plain}
\newtheorem{theorem}{Theorem}[section]
\newtheorem{proposition}[theorem]{Proposition}
\newtheorem{conjecture}[theorem]{Conjecture}
\newtheorem{lemma}[theorem]{Lemma}
\newtheorem{corollary}[theorem]{Corollary}
\theoremstyle{definition}
\newtheorem{definition}[theorem]{Definition}
\newtheorem{problem}[theorem]{Problem}
\newtheorem{example}[theorem]{Example}
\newtheorem{remark}[theorem]{Remark}
\numberwithin{equation}{section}
\renewcommand*{\eqref}[1]{%
 \hyperref[{#1}]{\textup{\tagform@{\ref*{#1}}}}%
}
\Crefname{definition}{\textcolor{magenta}{Definition}}{\textcolor{magenta}{Definitions}}
\newcommand{\Z}{\boldsymbol{z}}
\newcommand{\Y}{\boldsymbol{y}}
\newcommand{\YY}{\mathcal{Y}}
\newcommand{\XX}{\mathcal{X}}
\newcommand{\X}{\boldsymbol{x}}
\newcommand{\N}{\mathbb{N}}
\renewcommand{\S}{\mathbb{S}}
\newcommand{\Circle}{\raisebox{-2.5pt}{\Huge{$\circ$}}}
\DeclareMathOperator{\DHn}{DH}
\DeclareMathOperator{\area}{area}
\DeclareMathOperator{\dinv}{dinv}
\DeclareMathOperator{\sign}{sign}
\DeclareMathOperator{\maj}{maj}
\DeclareMathOperator{\pmaj}{pmaj}
\DeclareMathOperator{\comaj}{comaj}
\DeclareMathOperator{\revmaj}{revmaj}
\DeclareMathOperator{\revcomaj}{revcomaj}
\DeclareMathOperator{\rw}{rw}
\DeclareMathOperator{\Des}{Des}
\DeclareMathOperator{\asc}{asc}
\DeclareMathOperator{\Asc}{Asc}
\DeclareMathOperator{\D}{D}
\DeclareMathOperator{\weight}{weight}
\DeclareMathOperator{\Ht}{\widetilde{H}}
\DeclareMathOperator{\PR}{PR}
\DeclareMathOperator{\R}{R}
\DeclareMathOperator{\CC}{CC}
\DeclareMathOperator{\ith}{th}
\DeclareMathOperator{\st}{st}
\DeclareMathOperator{\bC}{{\bf CC}}
\DeclareMathOperator{\LC}{{LC}}
\DeclareMathOperator{\aPF}{{aPF}}
\DeclareMathOperator{\WV}{{WV}}
\DeclareMathOperator{\LW}{{LW}}
\DeclareMathOperator{\spl}{{split}}
\DeclareMathOperator{\join}{{join}}
\DeclareMathOperator{\PP}{\mathcal{P}}
\DeclareMathOperator{\LD}{LD}
\DeclareMathOperator{\LLD}{LLD}
\DeclareMathOperator{\RLD}{{RLD}}
\DeclareMathOperator{\RLLD}{{RLLD}}
\DeclareMathOperator{\SR}{{SR}}
\DeclareMathOperator{\bSM}{{SM}}
\DeclareMathOperator{\ret}{{ret}}
\DeclareMathOperator{\RP}{{RP}}
\DeclareMathOperator{\SP}{{SP}}
\DeclareMathOperator{\SC}{{SC}}
\DeclareMathOperator{\LSP}{{LSP}}
\DeclareMathOperator{\MLD}{{MLD}}
\DeclareMathOperator{\wt}{wt}
\DeclareMathOperator{\lwt}{lwt}
\DeclareMathOperator{\col}{col}
\let\oldXi\Xi
\let\Xi\undefined
\DeclareMathOperator{\Xi}{\oldXi}
\title{The Super Nabla Operator}
\date{\today} 		
\author{Fran\c{c}ois Bergeron}
\address{Universit\'{e} du Qu\'{e}bec \`{a} Montr\'{e}al \\ LACIM}
\email{bergeron.francois@uqam.ca}
\author{Jim Haglund}
\address{University of Pennsylvania \\ Department of Mathematics}
\email{jhaglund@upenn.edu}
\author{Alessandro Iraci}
\address{Universit\`{a} di Pisa \\ Dipartimento di Matematica}
\email{alessandro.iraci@unipi.it}
\author{Marino Romero}
\address{Universit\"{a}t Wien \\ Fakult\"{a}t f\"{u}r Mathematik}
\email{marino.romero@univie.ac.at}
\begin{document}

\begin{abstract}
	We consider here a new operator, called ``super nabla'', which is shown to be generic among operators for which the modified Macdonald polynomials are joint eigenfunctions. All previously known Macdonald eigenoperators can readily be obtained from super nabla, including the usual nabla operator, the Delta operators, and other operators that have appeared in the literature. Thus, the super nabla operator furnishes an overall unified viewpoint on this family of operators, as well as opening up new possibilities. We prove several new identities arising from specializations of the parameters $q$ and $t$ involved in the specification of these operators, as well as unifying combinatorial interpretations.
\end{abstract}

\maketitle
{\setcounter{tocdepth}{1}\parskip=0pt\footnotesize \tableofcontents}

\noindent{\bf Keywords}: Macdonald Operators; Catalan Combinatorics; Symmetric Functions.

\noindent{\bf 2020 Mathematics Subject Classification}: 05A17; 05E10; 05E05

\section{Introduction}

Recent years have seen an ever growing body of research on algebras constructed out of operators for which Macdonald polynomials form a joint basis of eigenfunctions. Related subjects range from Algebraic Combinatorics to Statistical Mechanics, going through Representation Theory, Algebraic Geometry, as well as Link Homology, to name but a few. We consider here the modified version  of Macdonald polynomials, denoted $\widetilde{H}_\mu(\X;q,t)$, for $\mu$ ranging over all integer partitions. Among many nice properties, they are known to have coefficients in $\N[q,t]$ when expanded in the basis of Schur functions (see~\cite{Haiman-nfactorial-2001}). For instance,
\begin{align*}
	& \widetilde{H}_{3}(\X;q,t) = s_3(\X) + (q+q^2) s_{21}(\X) + q^3 s_{111}(\X),   \\
	& \widetilde{H}_{21}(\X;q,t) = s_3(\X) + (q+t) s_{21}(\X) + qt\, s_{111}(\X),   \\
	& \widetilde{H}_{111}(\X;q,t) = s_3(\X) + (t+t^2) s_{21}(\X) + t^3 s_{111}(\X).
\end{align*}
It is usual to express this fact by saying that they are \emph{Schur positive}\footnote{With coefficients that are $(q,t)$-polynomials having positive integer coefficients.}. This Schur positivity has been naturally explained by showing (see~\cite{Garsia-Haiman-qLagrange-1996}) that $\widetilde{H}_\mu(\X;q,t)$ is the Frobenius characteristic of a bigraded $\S_n$-module $\mathcal{H}_\mu$, with the parameters $q$ and $t$ serving to encode the grading, and the Schur functions encoding $\S_n$-irreducibles. These \emph{Garsia-Haiman modules} $\mathcal{H}_\mu$ appear as sub-modules of the space of \emph{Diagonal Harmonics},
\[ \DHn_n = \left\{P \in \mathbb{C}[u_1,v_1,\dots, u_n,v_n] \mid \sum_{i=1}^n \partial_{u_i}^r \partial_{v_i}^s P = 0, \text{ whenever $r+s>0$} \right\}, \]
under the $\S_n$-\emph{action} that \emph{diagonally} permutes both sets of variables, i.e.\ $\sigma \cdot u_i = u_{\sigma(i)}$ and $\sigma \cdot v_i = v_{\sigma(i)}$.

Recall that the Frobenius characteristic of a bigraded $\S_n$-module $\mathcal{W} = \bigoplus_{(r,s)} \mathcal{W}^{(r,s)}$
is the symmetric function (in the variables $\X=(x_1,x_2,x_3,\ldots)$)
\[ \mathcal{F}(\mathcal{W})(\X;q,t) = \sum_{(r,s)\in \N\times\N} q^r t^s \sum_{\lambda \vdash n} n^{(r,s)}_\lambda s_\lambda(\X), \]
assuming that each component $\mathcal{W}^{(r,s)}$ decomposes as an $\S_n$-module as
\[ \mathcal{W}^{(r,s)} \simeq \bigoplus_{\lambda \vdash n } n^{(r,s)}_{\lambda} V^{\lambda}, \]
with $V^{\lambda}$ denoting (representatives of) Young's irreducible module indexed by $\lambda$, and  $n^{(r,s)}_\lambda$ denotes its multiplicity in $\mathcal{W}^{(r,s)}$. In other terms, the coefficient of a Schur function $s_\lambda(\X)$ in $\mathcal{F}(\mathcal{W})(\X;q,t)$ is the graded Hilbert series of occurrences of the irreducible $V^{\lambda}$ in $\mathcal{W}$.

One of the main conjectures in \cite{Garsia-Haiman-qLagrange-1996}, proven by Haiman in \cite{Haiman-nfactorial-2001}, states that
\begin{align}
	\label{nabla_en}
	\mathcal{F}\left( \DHn_n \right)(\X;q,t) = \nabla (e_n(\X)),
\end{align}
where $e_n(\X)$ stands for the degree $n$ \emph{elementary symmetric function}, and $\nabla$ is the Nabla operator of 
\cite{Bergeron-Garsia-ScienceFiction-1999} which has the (modified) Macdonald polynomials $\Ht_\mu (\X) = \Ht_\mu(\X;q,t)$ as eigenfunctions with eigenvalue
\begin{align*}
	\nabla  \Ht_\mu  (\X)= \Big(\prod_{(i,j)\in\mu} q^it^j\Big)  \Ht_\mu (\X).
\end{align*}
Here, $(i,j)$ runs over the set of \emph{cells} of $\mu$ (see \Cref{gammaparkingfunctions}). Among other similarly defined operators, we have
\begin{align}
	\label{Definition:Mac_Operators}
	\Delta_{e_1} \Ht_\mu (\X)= \Big(\sum_{(i,j)\in\mu} q^it^j\Big)  \Ht_\mu (\X), &  & {\rm and} &  &
	\Pi \Ht_\mu (\X)= \Big(\prod_{\substack{(i,j)\in\mu                                               \\ (i,j)\neq (0,0)}} (1-q^it^j)\Big)  \Ht_\mu (\X).
\end{align}

The main contribution of this work concerns the following extension of $\nabla$, which both unifies many previous results and opens new intriguing possibilities.

\begin{definition}
	\label{Definition:super-nabla}
	We define $\nabla_\ast \colon \Lambda \rightarrow \Lambda \otimes \Lambda$ as
	\begin{equation}
		\label{Equation:super-nabla}
		\nabla_{\ast} \Ht_\mu \coloneqq \Ht_\mu \otimes \Ht_\mu.
	\end{equation}
	Alternatively, if we consider a second set of variables $\Y$, we simply set
	\begin{equation}
		\nabla_{\Y} \Ht_\mu(\X) \coloneqq \Ht_\mu(\Y) \Ht_\mu(\X).
	\end{equation}
\end{definition}

From now on, we identify tensor products with multiplication of symmetric functions in different sets of variables.

For any symmetric function $F$, it is handy to extend the multi-variate Hall inner product to act on operators, as well as it does on symmetric functions. This way, we write $\langle F, \nabla_\ast \rangle$ for the operator that sends $H_\mu$ to
\begin{align}
	\langle F, \nabla_\ast \rangle \Ht_\mu \coloneqq \langle F, \Ht_\mu \rangle \Ht_\mu,
\end{align}
with $\langle F, \Ht_\mu \rangle$ standing for the usual version of the Hall scalar product. With this notation in hand, we have the following identities.

\begin{proposition}
	\label{prop:scalar_product}
	For any homogeneous symmetric function $F(\X)$ of degree $n$,
	\begin{align}
		\langle p_n,\nabla_\ast \rangle F = \Pi F &  & \text{and} &  & \langle e_k h_{n-k}, \nabla_\ast \rangle F = \Delta_{e_k} F.
	\end{align}
\end{proposition}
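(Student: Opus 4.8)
The plan is to reduce both identities to evaluations of the Hall scalar product $\langle F,\Ht_\mu\rangle$, and then to compute these from the Schur expansion of the modified Macdonald polynomials. By the very definition of $\langle F(\Y),\nabla_{\Y}\rangle$ given above (the case of a single set $\Y$), this operator sends $\Ht_\mu(\X)$ to $\langle F,\Ht_\mu\rangle\,\Ht_\mu(\X)$; thus it is diagonal in the basis $\{\Ht_\mu(\X):\mu\vdash n\}$ of the degree-$n$ symmetric functions, with eigenvalue $\langle F,\Ht_\mu\rangle$. The operators $\Pi$ and $\Delta_{e_k}$ are diagonal in the same basis, $\Pi$ with eigenvalue $\prod_{(i,j)\in\mu,\,(i,j)\neq(0,0)}(1-q^it^j)$ as in \eqref{Definition:Mac_Operators}, and $\Delta_{e_k}$ with eigenvalue $e_k\big[\sum_{(i,j)\in\mu}q^it^j\big]$ (the case $k=1$ being recorded in \eqref{Definition:Mac_Operators}). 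Hence it suffices to prove, for every $\mu\vdash n$ and every $0\le k\le n$,
\begin{align*}
\langle p_n,\Ht_\mu\rangle=\prod_{\substack{(i,j)\in\mu\\(i,j)\neq(0,0)}}(1-q^it^j)
\qquad\text{and}\qquad
\langle e_k h_{n-k},\Ht_\mu\rangle=e_k\Big[\,\sum_{(i,j)\in\mu}q^it^j\Big].
\end{align*}

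Write $B_\mu=\sum_{(i,j)\in\mu}q^it^j$; since the cell $(0,0)$ contributes the monomial $1$, one has $B_\mu-1=\sum_{(i,j)\neq(0,0)}q^it^j$ and $e_k[B_\mu]=e_k[B_\mu-1]+e_{k-1}[B_\mu-1]$. The one nontrivial ingredient I would invoke is the classical hook-shape evaluation of the modified Kostka coefficients,
\begin{align*}
\langle\Ht_\mu,s_{(n-r,1^r)}\rangle=e_r[B_\mu-1]\qquad(0\le r\le n-1).
\end{align*}
If one prefers a self-contained derivation, expand both sides of the known specialization $\Ht_\mu[1-z]=\prod_{(i,j)\in\mu}(1-z\,q^it^j)$ into Schur functions, using that $s_\lambda[1-z]$ vanishes unless $\lambda=(n-r,1^r)$ is a hook, that $s_{(n-r,1^r)}[1-z]=(-z)^r(1-z)$, and cancelling the common factor $(1-z)$ contributed by the cell $(0,0)$.

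Granting this, the first identity follows from the Murnaghan--Nakayama expansion $p_n=\sum_{r=0}^{n-1}(-1)^r s_{(n-r,1^r)}$: pairing with $\Ht_\mu$ gives $\langle p_n,\Ht_\mu\rangle=\sum_{r=0}^{n-1}(-1)^r e_r[B_\mu-1]=\prod_{(i,j)\neq(0,0)}(1-q^it^j)$, the eigenvalue of $\Pi$. For the second, Pieri applied to $e_k h_{n-k}=s_{(1^k)}\cdot h_{n-k}$ writes the product as the sum over partitions obtained by adjoining a horizontal $(n-k)$-strip to the column $1^k$, which is precisely $s_{(n-k+1,1^{k-1})}+s_{(n-k,1^k)}$ when $1\le k\le n-1$ (with only one of the two shapes legal at $k=0$, where $h_n=s_{(n)}$, and at $k=n$, where $e_n=s_{(1^n)}$, the latter matching $\Delta_{e_n}=\nabla$). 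Pairing with $\Ht_\mu$ and applying the hook formula, $\langle e_k h_{n-k},\Ht_\mu\rangle=e_{k-1}[B_\mu-1]+e_k[B_\mu-1]=e_k[B_\mu]$, the eigenvalue of $\Delta_{e_k}$. I do not expect a real obstacle: everything is an assembly of standard symmetric-function identities, and the only points needing attention are keeping the cell-coordinate convention for $B_\mu$ consistent with that of \eqref{Definition:Mac_Operators}, quoting the hook formula in the normalization appropriate to the present $\Ht_\mu$, and handling the endpoint cases $k\in\{0,n\}$ of the Pieri step.
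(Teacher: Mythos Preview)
Your proof is correct. The paper states this proposition without proof, so there is nothing to compare against directly; your argument via the diagonal action on the $\Ht_\mu$ basis, the hook evaluation $\langle \Ht_\mu, s_{(n-r,1^r)}\rangle = e_r[B_\mu-1]$ (which you correctly derive from the specialization $\Ht_\mu[1-z]=\prod_{(i,j)\in\mu}(1-z\,q^it^j)$ that the paper records later in its specialization section), Murnaghan--Nakayama for $p_n$, and Pieri for $e_k h_{n-k}$, is the standard route and exactly what the surrounding discussion in the paper suggests.
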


One of our main goals is to give combinatorial models for expressions of the form $\nabla_\ast^k F$ for interesting symmetric functions $F$. In particular, we consider the cases when\footnote{Recall that it is usual in the subject to denote by $M$ the product $(1-q)(1-t)$.} \[ F = \Xi e_\lambda \coloneqq \Delta_{e_1} M \Pi\, e_{\lambda}(\X/M) \] (as in \cite{IraciRomero2022DeltaTheta}), for which we give combinatorial expansions at $t=1$.

Note that the family of these $\Xi e_\lambda$ is a basis of $\Lambda$, and that $\Xi e_{\lambda,1} = \Delta_{e_1} \Theta_{e_\lambda} e_1$ in terms of Theta operators (see \cite{DAdderio-Iraci-VandenWyngaerd-Theta-2021}). Indeed, we will show that it is enough to understand the case $\lambda = (n)$ (in which case $\Xi e_n = e_n)$, meaning that the symmetric function $\nabla_\ast^{k+1} e_n$ contains all the information about the effect of the operator $\nabla_\ast^k$ on any symmetric function.

It will be convenient to set $\XX_k = \X^{\otimes k}$ (that is, $\X \otimes \X \otimes \dots \otimes \X$, $k$ times), where $k$ can be omitted if it is clear from the context, and, for any appropriate tuple of tuples $w = (w_1, \dots, w_n)$, with $w_i = (w_{i1}, \dots, w_{ik})$, set \[ \XX^w = (x_{w_{11}} x_{w_{21}} \cdots x_{w_{n1}}) \otimes \dots \otimes (x_{w_{1k}} x_{w_{2k}} \cdots x_{w_{nk}}). \]

In this work, we extensively explore the special case when $t$ is specialized at $1$, giving results such as the following.
\begin{theorem}
	When $t=1$, we have the following monomial, elementary, and Schur function expansions, respectively:
	\begin{align*}
		\left. \nabla_\ast^k e_n \right \rvert_{t=1}
			& = \sum_{L \in \LD_{k^n}} q^{\area(L)} \XX_{k+1}^L                      \\
			& = \sum_{L \in \LD_{k^n}^k} q^{\area(L)} e_{\eta(L)} \otimes \XX_{k}^L \\
			& = \sum_{L \in \LLD^k_{k^n}} q^{\area(L)} e_{\eta(L)} \otimes s_{\lambda^1(w(L))} \otimes \cdots \otimes s_{\lambda^k (w(L))},
	\end{align*}
	where the first two sums are over multi-labeled Dyck paths in the $nk \times n$ rectangle, with different labelings and associated monomials, and the last sum is over lattice multi-labeled Dyck paths. See \Cref{section:multi-labeledDyckpaths} for details.
\end{theorem}

In \Cref{section:symmetricfunctions} we describe two approaches giving these expansions and elaborate on the general combinatorial case, from where we derive alternate expansions. This involves statistics that \emph{look right} (see \Cref{looksright}), which are also useful for the \emph{square case} expansions of  \Cref{section:power}.

This so called square case gives an analogous result when  $F = \Delta_{m_\gamma} (-1)^{n-1} p_n$ (see \Cref{section:symmetricfunctions} for the necessary definitions). In particular if $\gamma=1^n$ then $F = \nabla \omega(p_n)$, and we get
\begin{theorem}
	\begin{align*}
		\left. \nabla_\ast^k \nabla \omega (p_n) \right\rvert_{t=1}
			& = \sum_{L \in \RLD_{k^n}} q^{\area(L)} \XX_{k+1}^L \\
			& = \sum_{L \in \RLD^{1^n}_{k^n}} q^{\area(L)} \XX_{k}^L \otimes e_{\eta(L)} \\
			& = \sum_{\substack{P \in \RLLD^{1^n}_{k^n} \\ w= w(p)}} q^{\area(P)} s_{\lambda^1(w)} \otimes \cdots \otimes s_{\lambda^k (w)} \otimes e_{\eta(P)},
	\end{align*}
	where the first two sums are over multi-labeled $k^n$-Dyck paths with a marked return, and the third sum is over those multi-labeled paths whose labels give lattice words. See \Cref{section:power} for details.
\end{theorem}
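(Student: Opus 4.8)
The plan is to follow the same route as the proof of the $e_n$-case above. Since $\Delta_{e_n}$ coincides with $\nabla$ on symmetric functions of degree $n$ (both multiply $\Ht_\mu$ by $e_n[B_\mu]=\prod_{(i,j)\in\mu}q^it^j$), we have $\nabla\omega(p_n)=\Delta_{e_n}(-1)^{n-1}p_n=\Delta_{m_{1^n}}(-1)^{n-1}p_n$, so the statement is precisely the $\gamma=1^n$ instance of the square case, and it suffices to analyze this $G$. Because $\nabla_{\YY}$ is diagonal in the $\Ht$-basis, writing $G(\X)=\sum_{\mu\vdash n}c_\mu(q,t)\,\Ht_\mu(\X)$ gives
\begin{equation*}
\nabla_{\YY}\,G(\X)=\sum_{\mu\vdash n}c_\mu(q,t)\,\Ht_\mu(\Y_1)\cdots\Ht_\mu(\Y_k)\,\Ht_\mu(\X),
\end{equation*}
and everything reduces to the $t\to1$ behaviour of this sum. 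Here $c_\mu$ is the $\nabla$-eigenvalue $T_\mu=\prod_{(i,j)\in\mu}q^it^j$ times the coefficients in the known expansion $(-1)^{n-1}p_n=(1-q^n)(1-t^n)\sum_{\mu\vdash n}\Pi_\mu\,\Ht_\mu/w_\mu$, where $w_\mu$ is the squared norm of $\Ht_\mu$ and $\Pi_\mu$ its $\Pi$-eigenvalue; this is the $p_n$-counterpart of the identity $e_n=\sum_\mu M B_\mu\Pi_\mu\Ht_\mu/w_\mu$ underlying the $e_n$-case, with $B_\mu$ the $\Delta_{e_1}$-eigenvalue of $\Ht_\mu$.

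Next I would carry out the specialization $t=1$ termwise. Each $\Ht_\mu(\Y_j)$ and $\Ht_\mu(\X)$ is a polynomial in $t$, so we may substitute $t=1$ in them using the known factorization $\Ht_\mu(\X;q,1)=\prod_i\Ht_{(\mu_i)}(\X;q)$ into one-row modified Macdonald polynomials; only $c_\mu$ is genuinely rational in $t$. The key observation is that $T_\mu$ is a unit at $t=1$ while both the numerator $(1-t^n)\Pi_\mu$ and the denominator $w_\mu$ vanish at $t=1$ to the same order $\ell(\mu)$ — the factor $1-t^n$ together with the $\ell(\mu)-1$ first-column cells of $\mu$ other than $(0,0)$ accounting for the vanishing of the numerator, and the $\ell(\mu)$ cells of $\mu$ with zero arm accounting for that of the denominator. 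Hence $\lim_{t\to1}c_\mu(q,t)$ exists as an explicit rational function of $q$, obtained by a routine limit, and we get a closed formula for $\nabla_{\YY}\,G(\X)\big|_{t=1}$ as a finite sum over $\mu\vdash n$ of products of one-row polynomials in the sets $\X,\Y_1,\ldots,\Y_k$, weighted by powers of $q$.

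To extract the combinatorics I would insert the monomial expansion of the one-row polynomials, $\Ht_{(m)}(\X;q)=\sum_w q^{\inv(w)}\X^w$ over words $w$ of length $m$ (equivalently, a positive sum of monomials with $q$-multinomial coefficients), and likewise in each $\Y_j$. Reading a partition $\mu\vdash n$ as the multiset of vertical-run lengths of a $(1^n,k^n)$-Dyck path, the $q$-powers coming from $T_\mu$, from the limits of the $c_\mu$, and from the word statistics assemble into $q^{\area(p)}$; the $\X$-labels collect into $e_{\eta(p)}(\X)$, the $\Y_j$-labels into $\YY^p$; and the $[n]$-type normalization left over from $(-1)^{n-1}p_n$ surfaces as the choice of a marked return on the path — the one new feature relative to the $e_n$-case. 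For the second line, restricting the $\Y_j$-labels to lattice words and invoking RSK (equivalently, the crystal structure on words) regroups $\sum_w\YY^w$ into $\prod_j s_{\lambda^j(w)}(\Y_j)$, and one checks that this regrouping preserves $\area(p)$, the shape $\eta(p)$, and the marked return. The same argument, run with general $\gamma$, yields the square-case statement of which the theorem is the $\gamma=1^n$ case.

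The step I expect to be the main obstacle is the combinatorial reading of the $t=1$ limit, that is, of $\lim_{t\to1}c_\mu$. In the $e_n$-case the extra factor is $B_\mu$, which distributes over the parts of $\mu$ and therefore melts into the path statistics without leaving any trace; for $\nabla\omega(p_n)$ the normalization produced by $\Delta_{e_n}$ acting on $(-1)^{n-1}p_n$ does not distribute over parts, and one must show that what survives after the limit is exactly the generating weight of a single marked return — identifying where the marking lives on the path and with what power of $q$. A secondary difficulty is the lattice-word step: since the paths now carry three decorations ($\area$, the shape $\eta(p)$, and the marked return), one must verify that the Schur regrouping is simultaneously compatible with all of them, which is where the ``looks right'' statistics and the bookkeeping from the $e_n$-case are needed.
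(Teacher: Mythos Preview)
Your identification of the theorem as the $\gamma=1^n$ instance of the general square case is correct, and your observation that $\nabla=\Delta_{e_n}=\Delta_{m_{1^n}}$ on degree~$n$ is fine. However, the route you sketch diverges from the paper's at the key step, and the divergence leaves a real gap.

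The paper does \emph{not} take the $t\to1$ limit of the Macdonald expansion of $(-1)^{n-1}p_n$. Instead it works entirely at $t=1$ and uses Newton's identity
\[
\omega\,p_n(\X)=(1-q^n)\sum_{\mu\vdash n}(-1)^{n-\ell(\mu)}\,|\SR(\mu)|\,\widehat{h}_\mu(\X),
\]
where $\SR(\mu)$ is the set of rearrangements of the parts of $\mu$ with a distinguished (``circled'') cell in the first part. This circle is present from the outset: it is carried through the ``labeled selected column-composition tableaux'' framework and, after a sign-reversing involution $\psi_\circ$ tailored to it (\magenta{Definition}~\ref{psicirc}), survives on the fixed points as precisely the marked return. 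The involution is a genuine modification of the $e_n$-case map: the circle can migrate cyclically between components, and the proof that $\psi_\circ$ is well defined uses $|\gamma|>0$ in an essential way.

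Your outline misses both of these ingredients. First, even after the honest $t\to1$ limit you describe, the resulting coefficients $\lim_{t\to1}c_\mu(q,t)$ still carry the sign $(-1)^{n-\ell(\mu)}$ (just as $\left.(MB_\mu\Pi_\mu/w_\mu)\right|_{t=1}$ does in the $e_n$-case), so a sign-cancellation mechanism is unavoidable; you never propose one. Second, the marked return does not ``surface from an $[n]$-type normalization'': in the paper it is the image of a concrete combinatorial datum (the circle from $|\SR(\mu)|$) under an explicit bijection, not an artifact of a scalar factor. Without Newton's identity (or an equivalent device introducing a marked rearrangement), there is no natural object in your expansion to become the marked return, and ``reading $\mu$ as the multiset of vertical-run lengths'' cannot account for the choice of a composition together with a marking. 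Your self-identified obstacle is exactly where the argument needs a new idea, and the paper supplies it via Newton's identity and the circle-tracking involution rather than via a direct limit of Macdonald coefficients.
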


\section{Combinatorial definitions}
\label{gammaparkingfunctions}

As usual we may represent partitions by their Ferrers diagram (using the French convention), so that the number of \emph{cells} in row $k$ is the size of the $k^{\ith}$ part of $\mu$.
Cells have usual cartesian coordinates, hence the cells of the $k^{\rm th}$ row have coordinates $(i,k-1)$, with $0\leq i< \mu_k$; and we write $c\in\mu$ when $c$ is a cell (of the diagram) of $\mu$.
For a partition $\mu$ and a cell $c \in \mu$, we let $a(c)$ and $\ell(c)$
respectively denote the {\sl arm} and {\sl  leg} of the cell $c$. These are, respectively, the number of cells in $\mu$ that lie strictly to the right of $c$ and the number of cells strictly above $c$. See \Cref{fig:(1)-armslegs} for an example.

\begin{figure}[ht]
	\begin{center}
		\includegraphics[scale=.5]{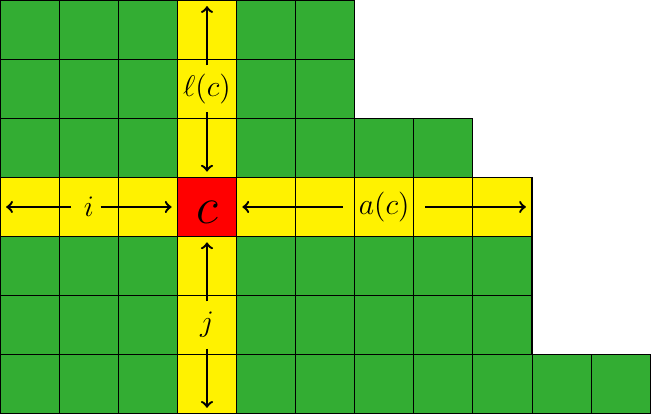}
	\end{center}
	\caption{The arm $a(c)$ and leg $\ell(c)$ of a cell $c=(i,j)$.}
	\label{fig:(1)-armslegs}
\end{figure}
We recall that we have the following notations in this context:
\begin{align}
	B_\mu(q,t)\coloneqq\sum_{(i,j)\in \mu} q^it^j, &                                                   & T_\mu(q,t)\coloneqq\prod_{(i,j)\in \mu} q^it^j, &
	                                               & \Pi_\mu(q,t)\coloneqq\prod_{\substack{(i,j)\in\mu                                                     \\ (i,j)\neq (0,0)}} (1-q^it^j),
\end{align}
and as is usual, $M$ stands for the product $(1-q)(1-t)$.

To avoid confusion, we make the following distinction between \emph{words} and \emph{tuples}. We use the term {\emph{word}} for a sequence $w = (w_1, \dots, w_n)$ whose \emph{letters} $w_i = (w_{i,1}, \dots, w_{i,r_i}) \in \mathbb{N}^{r_i}$ are \emph{tuples} of nonnegative integers. Hence $w_{i,j} = w_{ij}$ is the $j^{\text{th}}$ digit of the letter $w_i$.
For any word or tuple $\alpha$, we denote the number of its letters by $\ell(\alpha)$. Hence, $\ell(w_1,\dots, w_n)  =n$ and $\ell(w_i) = r_i$.
The size of a tuple $\alpha = (\alpha_1,\dots, \alpha_r)$ is given by $|\alpha| = \alpha_1+ \cdots + \alpha_r$. If each $\alpha_i$ is nonzero, then $\alpha$ is said to be a \emph{composition} and we write $\alpha \vDash |\alpha|$. If $\alpha$ is a composition whose entries are weakly decreasing, then $\alpha$ is a \emph{partition}, denoted by $\alpha \vdash |\alpha|$.  To illustrate, the word
\begin{align*}
	w=(2113,1133,3131)
\end{align*}
has length $\ell(w) =3$. The first letter of $w$ is the composition $w_1 = 2113$ whose length is $\ell(w_1) = 4$ and whose size is $|w_1| = 7$.

Recall that the \emph{descent set} and  \emph{ascent set} of  an $r$-tuple of integers  $\alpha = \alpha_1\cdots \alpha_r$ are respectively defined as
\begin{align*}
	\Des(\alpha) \coloneqq \{ 1 \leq i < r \mid \alpha_i > \alpha_{i+1} \} &  & {\rm and} &  &
	\Asc(\alpha) \coloneqq \{ 1 \leq i < r \mid \alpha_i < \alpha_{i+1} \}.
\end{align*}
Related to these are the following statistics:
\begin{align*}
	\maj(\alpha)    & \coloneqq \sum_{i \in \Des(\alpha)} i,     &  & \comaj(\alpha)  \coloneqq \sum_{i \in \Des(\alpha)} (n-i), \\
	\revmaj(\alpha) & \coloneqq \sum_{i \in \Asc(\alpha)} (n-i), &  & \revcomaj(\alpha)  \coloneqq \sum_{i \in \Asc(\alpha)} i.
\end{align*}
Note that $\revmaj$ and $\revcomaj$ actually are the $\maj$ and $\comaj$ of the reverse of the tuple, hence the name.
As is classical, to every composition $\alpha = (\alpha_1,\dots, \alpha_\ell)$ of $n$, we may associate the set of partial sums:
\begin{align*}
	\Sigma(\alpha) \coloneqq \{\Sigma_1(\alpha) , \dots, \Sigma_{\ell-1}(\alpha)\} &  & \text{ where } &  & \Sigma_r(\alpha) \coloneqq \alpha_1+\cdots+ \alpha_r,
\end{align*}
which establishes a bijection between compositions and subsets of $\{1,\cdots,n-1\}$.

For a composition $\alpha\vDash n$ and tuple $\beta = (\beta_1,\dots, \beta_n) \in \mathbb{N}^n$, let
\[ \revmaj_\alpha( \beta) \coloneqq \sum_r \revmaj(\beta_{\Sigma_r(\alpha)+1},\dots, \beta_{\Sigma_{r+1}(\alpha)}). \]

Given a word of tuples $w=(w_1,\dots, w_n)$, we say $i$ is an $r$-descent if there are precisely $r$ indices $j$ such that $w_{i,j}>w_{i,j+1}$. Writing $w_{\star r} \coloneqq (w_{1r}, \dots, w_{nr}) \in \mathbb{N}^n$ for the word formed by the $r^{\text{th}}$ digit of each tuple, we may set
\[ \revmaj_{\alpha}(w) \coloneqq \sum_{r} \revmaj_\alpha(w_{\star r}). \]
As an example, we have
\begin{align*}
	\revmaj_{(2,1)}( 211,113,313) & = \revmaj_{(2,1)}(213) + \revmaj_{(2,1)}(111) +\revmaj_{(2,1)}(133) \\
	                              & = 0+0+1.
\end{align*}

Let $m_i(\alpha)$ be the number of indices $j$ such that $\alpha_j = i$; that is, $m_i(\alpha)$ is the multiplicity of $i$ in $\alpha$. We denote the multiplicity type of $\alpha$ as $m(\alpha) \coloneqq 0^{m_0(\alpha)}1^{m_1(\alpha)} 2^{m_2(\alpha)} \cdots$.

A \emph{lattice word} is a sequence $\alpha = (\alpha_1, \dots, \alpha_r) \in \mathbb{N}_+^r$ such that, for all $1 \leq i, j \leq r$, \[ m_{j+1}(\alpha_1,\dots,\alpha_i) \leq m_j(\alpha_{1},\dots, \alpha_{i}). \]
This means that $\alpha$ is a tuple such that every prefix has at least as many $1$'s as $2$'s, at least as many $2$'s as $3$'s, and so on. Such a word encodes a standard tableau where $w_i$ gives the row in which $i$ is placed. If the shape of the resulting tableau is $\lambda$, then we say that $w$ is a lattice word of type $\lambda = (m_1(\alpha), m_2(\alpha),\dots)$ and write $\lambda(w) = \lambda$. For instance $1121321$ is a lattice word of type $\lambda(1121321) = (4,2,1)$.

Denote by $R(\beta)$ the set of all tuples $\alpha = (\alpha_1, \dots, \alpha_r)$ whose entries can be rearranged to give $\beta$, so that $m(\alpha) = m(\beta)$.
It is convenient to collect the multiplicities of a sequence of tuples $w= (w_1,\dots, w_r)$, with $w_i \in \mathbb{N}^{k}$, into a single word of multiplicities
$m(w) = ( m(w_{\star 1}) , \dots, m(w_{\star k}) )$,  which we call the \emph{multiplicity type} of $w$.

We define the set of \emph{partition vectors of size $\beta$ rearranging to $\alpha$} as
\begin{align*}
	\PR(\alpha,\beta) & \coloneqq \{ \vec{w} = (w^1,\dots, w^{\ell(\vec{w})}) \mid w^i \vdash \beta_i ~ \text{ and } ~ w^1\cdots w^{\ell(w)} \in R(\alpha) \}.
\end{align*}
This is the set of sequences of partitions with respective sizes given by the entries of $\beta$, and whose collective union of parts rearranges to $\alpha$.

\section{Symmetric function manipulations}
\label{section:symmetricfunctions}
Our goal in this section is to recall some symmetric function manipulations needed to derive upcoming expansions.
For a given symmetric function $F(\X)$, we will write \[ \widehat{F}(\X) = F \left[ \frac{\X}{1-q} \right] \]
so that the principal specialization $F(1,q,q^2,\dots)$ may be written as $\widehat{F}(1)  = F \left[ 1/(1-q) \right]$.

\subsection{Combinatorics of forgotten symmetric functions}

For $\mu \vdash n$ of length $\ell$, the combinatorial formula for the forgotten symmetric function $f_\mu$ \cite{Egecioglu-Remmel-Bricks} is given by
\[ f_\mu(\X)= (-1)^{n-\ell} \sum_{ \alpha \in \R(\mu)} \sum_{i_1 \leq \cdots \leq i_{\ell}} x_{i_1}^{\alpha_1} \cdots x_{i_\ell}^{\alpha_\ell}. \]
Plethystically substituting $ (1-q)^{-1}$ for $\X$, we get the expansion
\begin{equation}
	\label{eq:fmu}
	\widehat{f}_\mu(1) = (-1)^{n-\ell} \sum_{ \alpha \in \R(\mu)} \sum_{0 \leq i_1 \leq \cdots \leq i_{\ell}} \left( q^{i_1} \right)^{\alpha_1} \cdots \left( q^{i_\ell} \right)^{\alpha_\ell}.
\end{equation}

\begin{definition}
	\label{def:CCT}
	For a given partition $\mu$ of  $n$, a \emph{column-composition tableau} of type $\mu$ is a pair $C = (\alpha, c)$ where $\alpha \in \R(\mu)$ is a composition that rearranges to $\mu$, and $c = (c_1 \leq c_2 \leq \dots \leq c_n)$ is a sequence such that \[ c_i < c_{i+1} \implies i \in \Sigma(\alpha). \]
	We denote by $\CC_\mu$ the set of column-composition tableaux of type $\mu$, and by $\overline{\CC}_\mu$ the subset of those such that $c_1 = 0$. For $C \in \CC_\mu$, we define the \emph{length} of $C$ as $\ell(C) = \lvert \mu \rvert$ and \emph{size} of $C$ as $\lvert C \rvert = c_1 + c_2 + \dots + c_n$. We will write $c_i(C)$ for $c_i$ when we need to specify the size of column $i$ in the column-composition tableau $C$.
\end{definition}

\begin{example}
	We can depict the elements of $\CC_\mu$ as follows.
	\begin{center}
		\begin{tikzpicture}[xscale=-1,scale=.5]
			\Yboxdim{1cm}
			\tyng(0cm,0cm,9,7,7,4,4)
			\ylw
			\tyng(0cm,-1cm,11)
			\begin{large}
				\draw[-,line width=.4mm] (-1,0)-- (12,0);
				\draw[-,line width=.4mm] (1,-1.5)-- (1,5.5);
				\draw[-,line width=.4mm] (4,-1.5)-- (4,5.5);
				\draw[-,line width=.4mm] (6,-1.5)-- (6,3.5);
				\draw[-,line width=.4mm] (7,-1.5)-- (7,3.5);
				\draw[-,line width=.4mm] (9,-1.5)-- (9,2.5);
			\end{large}
		\end{tikzpicture}
	\end{center}
	Here, we have $C = (\alpha, c)$ with $\alpha = 221231$ and $c = (0,0,1,1,3,3,3,5,5,5,5)$. Indeed $\mu = 322211$ (so $\alpha \in \R(\mu)$), $\lvert C \rvert = \sum_i c_i = 31$ is the number of green cells, and $\ell(W) = \lvert \mu \rvert = 11$. Since $c_1(C) = 0$ (there are no green cells in the first column), we have $C \in \overline{\CC}_{(3,2^3,1^2)}$.
\end{example}

\begin{definition}
	\label{def:CCmu}
	Denote the generating function of column-composition tableaux of type $\mu$ by
	\begin{align}
		\bC_\mu(q)            & \coloneqq \sum_{C \in \CC_\mu} q^{|C|},~ \text{ and }                             \\
		\overline{\bC}_\mu(q) & \coloneqq (1-q^{|\mu|})  {\bC}_\mu(q) =  \sum_{C \in \overline{\CC}_\mu} q^{|C|}.
	\end{align}
\end{definition}

By construction, these series give the principal specialization of the forgotten basis (see \cite{IraciRomero2022DeltaTheta}*{Proposition~5.2}).

\begin{proposition} For any partition $\mu$,
	\begin{align}
		 & \widehat{f}_\mu(1) = (-1)^{|\mu|-\ell(\mu)} \bC_\mu(q), ~ \text{ and }           \\
		 & (1-q^{|\mu|}) \widehat{f}_\mu(1) = (-1)^{|\mu|-\ell(\mu)} \overline{\bC}_\mu(q).
	\end{align}
\end{proposition}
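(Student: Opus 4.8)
The plan is to derive both identities from the combinatorial formula \eqref{eq:fmu} for $f_\mu[1/(1-q)]$ by setting up a sign-respecting bijection between the monomial-index data appearing there and column-composition tableaux. Since the statement asserts that $\overline{\bC}_\mu(q) = (1-q^{|\mu|})\bC_\mu(q)$ is built into the definition of the two generating functions, it suffices to prove the first identity $f_\mu[1/(1-q)] = (-1)^{|\mu|-\ell(\mu)}\bC_\mu(q)$; the second then follows by multiplying through by $(1-q^{|\mu|})$ and using the displayed definition of $\overline{\bC}_\mu(q)$. (Alternatively one proves the $\overline{\CC}_\mu$ version directly and divides, but since dividing by $1-q^{|\mu|}$ in $\mathbb{N}[[q]]$ is harmless here, either order works.)

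First I would unwind \eqref{eq:fmu}: it expresses $(-1)^{|\mu|-\ell(\mu)} f_\mu[1/(1-q)]$ as $\sum_{\alpha \in \R(\mu)} \sum_{0 \le i_1 \le \cdots \le i_\ell} q^{i_1\alpha_1 + \cdots + i_\ell \alpha_\ell}$, where $\ell = \ell(\mu)$. So I need a weight-preserving bijection between the pairs $(\alpha, (i_1 \le \cdots \le i_\ell))$ indexing that double sum and the column-composition tableaux $C = (\alpha, c)$ with $c = (c_1 \le \cdots \le c_n)$ and weight $|C| = c_1 + \cdots + c_n$. The natural move is a ``column-reading'' reindexing: given $\alpha \vDash n$ with $\ell$ parts and the weakly increasing sequence $i_1 \le \cdots \le i_\ell$, form the length-$n$ sequence $c$ that repeats $i_r$ exactly $\alpha_r$ times, i.e. $c = (\underbrace{i_1,\dots,i_1}_{\alpha_1}, \underbrace{i_2,\dots,i_2}_{\alpha_2}, \dots)$. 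Then $c_1 + \cdots + c_n = \sum_r \alpha_r i_r$, matching the exponent, and the blocks of equal values of $c$ can only change value at positions in $\Sigma(\alpha)$ — which is precisely the constraint $c_i < c_{i+1} \implies i \in \Sigma(\alpha)$ in the definition of $\CC_\mu$. Conversely, from a column-composition tableau $(\alpha, c)$ one recovers $(i_1,\dots,i_\ell)$ by reading off the constant value of $c$ on each $\alpha$-block; weak increase of $c$ gives $i_1 \le \cdots \le i_\ell$. This is manifestly a bijection, and it preserves $q$-weight.

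The main point to get right — really the only subtlety — is matching up the constraint ``$c_i < c_{i+1} \implies i \in \Sigma(\alpha)$'' with the requirement that $c$ be constant on each $\alpha$-block. These are not literally the same statement: the tableau condition allows $c$ to be constant across a block boundary, whereas ``reading off the block values'' would then produce $i_r = i_{r+1}$, which is fine since we only demand $i_1 \le \cdots \le i_\ell$ (not strict). So one must check the map is still a bijection in the presence of repeated $i_r$'s: the key observation is that $\alpha$ is part of the data on both sides, so even when $i_r = i_{r+1}$ the underlying composition $\alpha$ is remembered and no collision occurs. I would spell this out carefully, as it is the one place where a careless argument could double-count or miss elements. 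Everything else is bookkeeping: the sign $(-1)^{|\mu| - \ell(\mu)}$ is identical on both sides, and summing the bijection over all $\alpha \in \R(\mu)$ yields exactly $\bC_\mu(q) = \sum_{C \in \CC_\mu} q^{|C|}$. Finally, restricting to $c_1 = 0$ on the tableau side corresponds to $i_1 = 0$ on the other side, and pulling the factor $(1-q^{|\mu|})$ through the identity $\overline{\bC}_\mu(q) = (1-q^{|\mu|})\bC_\mu(q)$ gives the second equation; alternatively one cites \cite{IraciRomero2022DeltaTheta}*{Proposition~5.2} for the principal-specialization statement and the bijection above is just making that reference explicit in the present notation.
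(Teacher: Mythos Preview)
Your proposal is correct and is essentially the same as the paper's approach: the paper simply asserts that ``by construction, these series give the principal specialization of the forgotten basis'' and cites \cite{IraciRomero2022DeltaTheta}*{Proposition~5.2}, whereas you have spelled out explicitly the bijection that this phrase encodes. Your treatment of the one subtlety (that $\alpha$ is carried along as data, so equal consecutive $i_r$'s cause no ambiguity) is exactly what is needed to make the ``by construction'' claim rigorous.
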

We also have the following specialization. The set of column-composition tableaux of type $(1^n)$ are in bijection with partitions whose largest part is at most $n$. Indeed, as $f_{1^n} = h_n$, the bijection is explained by the following identity of generating series.
\begin{corollary}
	\label{cor:forgotten}
	For any $n$, \[ \widehat{f}_{1^n}(1) = \widehat{h}_n(1) = \frac{1}{(1-q)\cdots(1-q^n)}. \]
\end{corollary}

\subsection{Classical properties of the Macdonald polynomials}
In plethystic notation, the classical Cauchy (kernel) identities state that
\begin{proposition}
	\label{prop:Cauchy_formula} For any $n$, the identity
	\begin{align}
		h_n[\X\Y] = \sum_{\mu\vdash n} F_\mu(\X) G_\mu(\Y), &  & \text{or equivalently} &  & e_n[\X\Y] = \sum_{\mu\vdash n} F_\mu(\X)\, \omega \left( G_\mu(\Y) \right),
	\end{align}
	holds if and only if $\{F_\mu(\X)\}_\mu$ and $\{G_\mu(\X)\}_\mu$ are \emph{dual bases} under the Hall scalar product of symmetric functions.
\end{proposition}

Via the RSK correspondence, it follows that the Schur basis is self dual. Here, $\omega$ denotes the linear and multiplicative involution sending $p_n$ to $(-1)^{n-1} p_n$ (and $s_\mu$ to $s_{\mu'}$).

The modified Macdonald polynomials afford the following $(q,t)$-Cauchy identity
\begin{align}\label{Cauchy_qt}
	e_n^* \left[ {\X\Y} \right]  = \sum_{ \mu \vdash n} \frac{\Ht_\mu(\X;q,t) \Ht_\mu(\Y;q,t)}{w_\mu(q,t)},
\end{align}
where $F^\ast (\X)$ stands for $F \left[ \X/M \right] = F \Big[{\textstyle \frac{\X}{(1-q)(1-t)}}\Big]$, and $w_{\mu} = w_\mu(q,t)$ is the polynomial
\begin{align*}
	w_\mu(q,t) \coloneqq \prod_{c\in\mu} \left( q^{a(c)}-t^{\ell(c)+1} \right) \left( t^{\ell(c)}-q^{a(c)+1} \right).
\end{align*}
Just as the Cauchy identity is related to the Hall scalar product, the $(q,t)$-Cauchy identity relates to the following scalar product.
\begin{definition}
	The $(q,t)$-\emph{scalar product} $\langle F,G \rangle_*$ is defined on the basis $\{p_\mu\}_\mu$ of power sum symmetric functions by setting
	\begin{align}
		\langle p_\mu,p_\lambda\rangle_* \coloneqq \delta_{\lambda \mu} (-1)^{n-\ell(\mu)} z_\mu~ p_\mu[(1-q)(1-t)]
	\end{align}
	where the factor $z_\mu = \langle p_\mu, p_\mu \rangle = 1^{m_1(\mu)} m_1(\mu)! \, 2^{m_2(\mu)} m_2(\mu)! \cdots n^{m_n(\mu)} m_n(\mu)!$ comes from the usual Hall scalar product, with $m_i(\mu)$ being the multiplicity of the part $i$ in $\mu$.
\end{definition}
It follows that, for any two symmetric functions $F$ and $G$,  we have
\begin{align}
	\langle F, G \rangle_\ast = \langle F(X), \omega(G)[M\X] \rangle.
\end{align}

Macdonald polynomials are orthogonal with respect to this inner product. Indeed,
\begin{align}
	\langle  \Ht_\lambda, \Ht_\mu \rangle_\ast = \delta_{\lambda \mu} w_\mu.
\end{align}
Observe that \Cref{Cauchy_qt} implies that we have the identities
\begin{align}
	\label{en_to_H}
	e_n^\ast (\X) = \sum_{ \mu \vdash n} \frac{\Ht_\mu(\X;q,t) }{w_\mu(q,t)} \qquad \text{and} \qquad e_n^\ast \left[ {\X\Y}\right] = \nabla_{\Y} e_n \left[ {\X}/{M} \right].
\end{align}
Furthermore, from the evaluation
\begin{align}
	\Ht_\mu[M] = M \, B_\mu(q,t) \, \Pi_\mu(q,t),
\end{align}
and~\Cref{Cauchy_qt}, we get the well-known expansion
\begin{align}
	e_n(\X)= e_n\left[ M\,\frac{\X}{M} \right]=  \sum_{\mu\vdash n}\frac{M \, B_\mu\,\Pi_\mu}{w_\mu}\, \Ht_\mu(\X;q,t).
\end{align}
Equivalently, in terms of the operators of \Cref{Definition:Mac_Operators}, we have
\begin{align}
	\label{Equation:Xi_en}
	e_n(\X) = M \Delta_{e_1} \Pi\, e_n^\ast(\X).
\end{align}

\subsection{Specializations}
Some of the simplest (plethystic) specializations of the modified Macdonald polynomials are as follows:
\begin{align*}
	\Ht_\mu[1] = 1,                                 &  &
	\Ht_\mu[ -1] = (-1)^{|\mu|} T_\mu, \text{ and } &  &
	\Ht_\mu[ -\epsilon] = T_\mu,
\end{align*}
where $\epsilon$ is formally considered to be such that $p_n(\epsilon)=(-1)^n$.
Hence, on symmetric functions of homogeneous degree $n$, the operator $\nabla_{\Y}$ specializes to
\begin{align*}
	\nabla_{1}= \mathrm{Id},     &  &
	\nabla_{-1} = (-1)^n \nabla, \text{ and }&  &
	\nabla_{-\epsilon} = \nabla.
\end{align*}
A richer, similar specialization, in terms of an extra variable $u$, is given by
\begin{align}
	\Ht_{\mu}[1-u] = \prod_{(i,j)\in \mu} (1- q^i t^j\,u), &  & \text{or equivalently} &  & \Ht_{\mu}[1-\epsilon u] = \prod_{(i,j)\in \mu} (1+ q^i t^j\,u).
\end{align}
At $t=1$, the Macdonald polynomial $\Ht_{\mu}$ specializes to
\begin{align}\label{specialization:t_equal_1}
	 & \Ht_{\mu}(\X;q,1) = \frac{\widehat{h}_\mu(\X)}{\widehat{h}_\mu(1)}, &  & \text{and thus} &  & \Ht_{\mu}(\X;0,1) = h_\mu(\X),
\end{align}
where, again, for any symmetric function $F=F(\X)$, we write $\widehat{F}(\X)$ for $ F \big[ {\textstyle \frac{\X}{1-q}} \big]$. Moreover, at $t=1/q$ we get the similar expression
\begin{align}
	\label{specialization:t_equal_1_over_q}
	\Ht_{\mu}(\X;q,1/q) = \frac{\widehat{s}_\mu(\X)}{\widehat{s}_\mu(1)}.
\end{align}
It may be worth recalling that\footnote{Clearly  $\widehat{h}_n(1)$ is also equal to $1/(q;q)_n$,  where $(q;t)_k = (1-q) (1-qt) \cdots(1-qt^{k-1})$ stands for the \emph{Pochammer symbol}. We  sometimes write $(q;q)_\mu$ for the product$ (q;q)_{\mu_1} (q;q)_{\mu_2} \cdots (q;q)_{\mu_k}$. }
\begin{align*}
	\widehat{h}_n(1) & = h_n\Big[{\textstyle \frac{1}{1-q}}\Big] = \prod_{i=1}^n \frac{1}{1-q^i},   \\
	\widehat{e}_n(1) & = e_n\Big[{\textstyle \frac{1}{1-q}}\Big] = \prod_{i=1}^n \frac{q^{i-1}}{1-q^i},
\end{align*}
and more generally,
\begin{align*}
	\widehat{s}_\mu(1) = s_\mu\Big[{\textstyle \frac{1}{1-q}}\Big] = s_\mu(1,q,q^2,\ldots) = \frac{q^{n(\mu)}}{\prod_{c\in\mu} {1-q^{\text{hook}(c)}}},
\end{align*}
where $\text{hook}(c)$ stands for the hook length of $c$ in $\mu$, and $n(\mu)\coloneqq\sum_{(i,j)\in\mu} (j-1)$.

Well-known operators may be obtained as specializations of $\nabla_{\Y}$. Recall that, for any given symmetric function $F$, one considers the $\Delta$ operators \cite{Bergeron-Garsia-Haiman-Tesler-Positivity-1999} defined by
\begin{equation}
	\Delta_F \Ht_\mu \coloneqq F[B_\mu(q,t)]\, \Ht_\mu.
\end{equation}
We then observe that
\begin{align}
	\nabla_{1-\epsilon u}= \sum_{k\geq 0} u^k \Delta_{e_k}.
\end{align}
Applying (independently) two such specializations to $\nabla_\ast$, we clearly get
\begin{align}
	\nabla_{1-\epsilon u} \nabla_{1-\epsilon v}= \sum_{k,j \geq 0} u^k v^j \Delta_{e_ke_j},
\end{align}
It follows by iteration that we can obtain any of the operators $\Delta_{e_\lambda}$, by taking suitable coefficients in compositions of operators $\nabla_\ast$. Thus, any of the operators $\Delta_F$ may also be obtained by simply taking suitable linear combinations of these compositions. Observe that, on non-constant symmetric functions, we can also obtain the operator $\Pi$ from the specialization $\lim_{u\to 1} (1-u)^{-1} \nabla_{1-u}$. Another interesting observation along these lines concerns the operator $\Xi$ \cite{IraciRomero2022DeltaTheta}.
\begin{definition}
	Define the linear operator $\Xi \colon \Lambda \rightarrow \Lambda$ by setting, for $F \in \Lambda$,
	\begin{align*}
		\Xi F(\X) \coloneqq M \Delta_{e_1} \Pi \, F^\ast(\X).
	\end{align*}
\end{definition}
As we will now show, both of the symmetric functions $\Xi(e_\lambda)$ and $\Xi(s_\lambda)$ can be expressed in terms of $\nabla_\ast e_n$. Indeed, we first observe  from \Cref{Equation:Xi_en} that we have \[ \Xi e_n = \sum_{\mu \vdash n} \frac{M B_\mu \Pi_\mu}{w_\mu} \Ht_\mu = e_n. \]
For any $\lambda$, we can now write
\begin{align*}
	\Xi e_\lambda 	& = \sum_{\mu \vdash n} \frac{M B_\mu \Pi_\mu}{w_\mu} \Ht_\mu \langle e_{\lambda}^\ast , \Ht_\mu \rangle_\ast \\
					& = \sum_{\mu \vdash n} \frac{M B_\mu \Pi_\mu}{w_\mu} \Ht_\mu \langle h_\lambda , \Ht_\mu \rangle             \\
					& = \langle h_\lambda, \Ht_\mu \rangle \sum_{\mu \vdash n} \frac{M B_\mu \Pi_\mu}{w_\mu} \Ht_\mu              \\
					& = \langle h_\lambda, \nabla_\ast \rangle e_n
\end{align*}

It is clear that, for any $F$, the operators $\langle F, \nabla_\ast \rangle$ and $\nabla_\ast$ commute. We have thus shown, by linearity and an application of $\nabla_\ast^k$, that
\begin{proposition}
	For any partition $\lambda$, we have
	\begin{align*}
		\nabla_\ast^k \Xi e_\lambda = \langle h_\lambda, \nabla_\ast \rangle \nabla_\ast^k e_n, &  & \text{and} &  &
		\nabla_\ast^k \Xi s_\lambda = \langle s_\lambda, \nabla_\ast \rangle \nabla_\ast^k e_n.
	\end{align*}
\end{proposition}
A consequence of this result is that, in order to compute $\nabla_\ast^k \Xi e_\lambda$ and $\nabla_\ast^k \Xi s_\lambda$, it is enough to compute $\nabla_\ast^{k+1} e_n$ and take a scalar product.

\subsection{Setting $t=1$}
Clearly, the above calculations may be specialized at $t=1$. Observe that, applying the specialization of \Cref{specialization:t_equal_1}, we directly obtain
\[ \left. \nabla_\ast \Ht_\mu(\X;q,t) \right\rvert_{t=1} = \frac{\widehat{h}_\mu}{\widehat{h}_\mu(1)} \otimes \frac{\widehat{h}_\mu}{\widehat{h}_\mu(1)}. \]
The specialization at $t=1$ of the operator $\nabla_\ast$ may thus be considered as an operator itself.
\begin{definition}
	We define $\widehat{\nabla}_\ast \colon \Lambda_{\mathbb{Q}(q)} \rightarrow \Lambda_{\mathbb{Q}(q)} \otimes \Lambda_{\mathbb{Q}(q)}$, as
	\begin{align}\label{definition:nabla_t_equal_1}
		\widehat{\nabla}_\ast \widehat{h}_\mu = \frac{1}{\widehat{h}_\mu(1)} \widehat{h}_\mu \otimes \widehat{h}_\mu.
	\end{align}
\end{definition}

It is now clear that, for any symmetric function $F$ with no poles at $t=1$, we have
\begin{align}
	\left. \left( \nabla_\ast F \right) \right\rvert_{t=1} = \widehat{\nabla}_\ast \left( F \rvert_{t=1} \right).
\end{align}

Applying Cauchy's formula with the pair of dual basis $(m_\mu,h_\nu)$, we get
\begin{align}
	e_n(\X) & = e_n \left[ (1-q)\,\frac{\X}{1-q} \right] \\
	        & = \sum_{\mu \vdash n} f_\mu[1-q]\, h_\mu \left[ \frac{\X}{1-q} \right] \\
	        & = \sum_{\mu \vdash n} f_\mu[1-q]\, \widehat{h}_\mu(\X),
\end{align}
and therefore
\begin{align}
	\label{eq:firstexpansion}
	\widehat{\nabla}_\ast^k e_n = \sum_{\mu \vdash n} f_\mu[1-q]\ \widehat{h}(1)\ \left( \frac{\widehat{h}_\mu}{\widehat{h}(1)} \right)^{\otimes k+1}
\end{align}
The monomial expansion of $\widehat{h}_\mu(\X)$ may be calculated, for example, by specializing Haglund, Haiman, and Loehr's formula for the modified Macdonald polynomials \cite{Haglund-Haiman-Loehr}.
\[ \left. \langle \Ht_\mu, h_\lambda \rangle  \right\rvert_{t=1} = \frac{1}{\widehat{h}_\mu(1)}  \langle \widehat{h}_\mu, h_\lambda \rangle = \sum_{m(\alpha) =  1^{\lambda_1} 2^{\lambda_2} \cdots} q^{\revmaj_\mu( \alpha)}. \]
For any composition $\beta \in \R(\mu)$, taking the $(k+1)$-tensor of the formula above, we have
\begin{equation*}
	\left( \frac{\widehat{h}_\mu}{\widehat{h}(1)} \right)^{\otimes k+1} = \sum_{w \in (\mathbb{N}_+^{k+1})^n} q^{\revmaj_\beta(w)} \XX^w
\end{equation*}
where if $w = (w_1, \dots, w_n)$ with $w_i=(w_{i0},\dots, w_{ik}) \in \mathbb{N}_+^{k+1}$,
\begin{equation*}
	\revmaj_\beta(w) = \sum_{i=0}^{k} \revmaj_\beta( w_{1i}, \dots, w_{ni}),
\end{equation*}
and \[ \XX^w = \bigotimes_{i=0}^{k} \X^{w_{\ast i}} = \prod_{i=0}^{k} \prod_{j=1}^n x_{i,w_{ji}} \quad \text{ with } \quad w_{\ast i} = (w_{1i}, w_{2i}, \dots w_{ni}). \]
The only term in \Cref{eq:firstexpansion} left to describe is the specialization $f_\mu[1-q]$. To see what this is, we first see that
\[ e_n = \sum_{\mu \vdash n} \left( \frac{MB_\mu \Pi_\mu}{w_\mu} \right) \Big|_{t=1}\  \frac{\widehat{h}_\mu}{\widehat{h}_\mu(1)}  = \sum_{\mu \vdash n} f_\mu[1-q]\, \widehat{h}_\mu. \]
It can be shown, as done in \cite{IraciRomero2022DeltaTheta}, that at $t=1$ one has
\[ \left. \left( \frac{M B_\mu \Pi_\mu}{w_\mu} \right)\right\rvert_{t=1} = (-1)^{n-\ell(\mu)}\ \widehat{h}_\mu(1)\sum_{\alpha \in R(\mu)} (1-q^{\alpha_1}) = \widehat{h}_\mu(1)\, f_\mu[1-q], \]
meaning that
\begin{align}
	f_\mu[1-q] = (-1)^{n-\ell(\mu)} \sum_{\alpha \in R(\mu)} (1-q^{\alpha_1}),
\end{align}
as was also seen in \cite{Hicks-Romero-2018} using summation formulas.

Lastly, from the Cauchy identity, it follows that
\begin{align*}
	\widehat{h}_\mu & =  \sum_{ \eta \vdash n } e_\eta \sum_{ \vec{v} \in \PR(\eta,\mu)} \widehat{f}_{\vec{v}}(1),
\end{align*}
where, for convenience, we set \[ f_{\vec{v}} = \prod_{i=1}^{\ell(\vec{\nu})} f_{\nu^i}. \]

We can now combine these identities to get our preliminary expansion.
\begin{proposition}
	For any $n$,
	\begin{align*} 
		\widehat{\nabla}_\ast^k e_n = \sum_{\eta \vdash n} e_\eta \otimes \left( \sum_{\mu \vdash n} (-1)^{n-\ell(\mu)} \sum_{\beta \in \R(\mu)} (1-q^{\beta_1}) \sum_{w \in (\mathbb{N}^{k})^n} q^{\revmaj_\beta(w)}\, \XX^w \sum_{\vec{\nu} \in \PR(\eta,\mu)} \widehat{f}_{\vec{\nu}} \right)
	\end{align*}
	where
	\[ \XX^w =  \prod_{i=1}^n \prod_{j=1}^k x_{j,w_{ij}}. \]
\end{proposition}

\begin{definition}
	\label{def:DExpression}
	Given a set of words $W$, a function $\rho \colon W \rightarrow A[q]$ (with $A$ some given ring), and a composition $\eta \vDash n$,  we define the rational function
	\begin{align*}
		D_{\eta,W}^\rho(q) \coloneqq  \sum_{\mu \vdash n} (-1)^{n-\ell(\mu)} \sum_{\beta \in \R(\mu)} (1-q^{\beta_1})
		\sum_{w \in W} \rho(w) \sum_{\vec{\nu} \in \PR(\eta,\mu)} f_{\vec{\nu}}[1/(1-q)].
	\end{align*}
\end{definition}

In \Cref{definition:WV} we will consider specific choices for $\rho$ that will be said to \emph{look right}. For these, we will show that the above rational function is not only a polynomial, but it is given by the area enumeration of a class of $\rho$-compatible labeled parallelogram polyominoes.



	

\section{Different weights}
\label{section:weights}

In this section we extend the results of \cite{IraciRomero2022DeltaTheta}*{Sections~7,10} to more general weights.
Through this section, we will fix a set of words $W \subseteq \bigcup_{n \in \mathbb{N}} X^n$ in the alphabet $X$, and we let $A$ be some fixed ring.
\begin{definition}
	\label{looksright}
	Let $X$ be any alphabet of variables, and let $W \subseteq \bigcup_{n \in \mathbb{N}} X^n$ be a set of words in the alphabet $X$. Let $A$ be any ring. We say that a function $\rho \colon W \rightarrow A[q]$ \emph{looks right} if there exist a weight function $\wt \colon X \rightarrow A$ and a local weight function $\lwt \colon W \times \mathbb{N} \rightarrow \mathbb{N}$ giving
	\[ \rho(w) = \prod_{i=1}^{\ell(w)} q^{\lwt(w,i) (\ell(w)-i)} \wt(w_i) \in A[q], \]
	where $\lwt(w,i)$ depends only on $w_i$ and $w_{i+1}$.
\end{definition}
For example, if $X = \mathbb{N}^k$ and $W = \bigcup_{n \in \mathbb{N}} X^n$ is the set of all words with letters in $X$, then $\revmaj$ is a statistic which looks right with local weight function $\lwt(w,i) = r$ if $w$ has an $r$-descent at $i$.

\begin{remark}
	Let $S \colon W \rightarrow 2^\mathbb{N}$ such that $S(w) \subseteq \{1, 2, \dots, \ell(w)\}$ be any function that picks a susbet of the indices of $w$. For $X = \mathbb{N}$, $A = \mathbb{Z}$, $\wt = 1$, and $\lwt(w,i) = 1$ if $i \in S(w)$ and $0$ otherwise, up to taking the $q$-logarithm, \Cref{looksright} reduces to \cite{IraciRomero2022DeltaTheta}*{Definition~10.1}.
\end{remark}

From now on, we assume that $W$ is equipped with a function $\rho$ that looks right.

\begin{definition}
	\label{definition:WV}
	A \emph{word vector} $\vec{w} \in WV(W)$ is a sequence of words $(w^1,\dots, w^r)$ whose concatenation $w^1\cdots w^r $ is in $W$. If $\ell(w^i) = \beta_i$, then we write $\vec{w} \in \WV(W,\beta)$. If $\rho$ looks right on subwords of elements in $W$, then we may set
	\[ \rho(\vec{w}) = \prod_{i=1}^r \rho(w^i), \] and say that $\rho$ looks right on $\WV(W)$.
\end{definition}
For instance, if $X= \mathbb{N}^3$, then
$( (142,221,421),(432),(212,670)) \in \WV(W, (3,1,2)) $ is a word vector whose entries are words of lengths $3,1$ and $2$ respectively; and the letters in each word are tuples of length $3$. Here, we can choose $W$ to be $(\mathbb{N}^3)^6$.

\begin{definition}\label{LCCTDefinition}
	Let $\eta$ be a partition of  $n$, and $W \subseteq \bigcup_{n \in \mathbb{N}} X^n$ be a set of words in the alphabet $X$. A \emph{sequence of $W$-labeled column-composition tableaux} of type $\eta$ is a tuple of labeled column-composition tableaux $(C^i, w^i)_{1 \leq i \leq r}$ such that, with $\beta = (\beta_1, \dots, \beta_r)$ and $\beta_i = \ell(C^i)$, we have:
	\begin{enumerate}
		\item $C^1 \in \overline{\CC}_{\nu^1}$ and $C^i \in \CC_{\nu^i}$ for $i>1$, for some $\vec{\nu} \in \PR(\eta, \beta)$;
		\item $\vec{w} = (w^1, \dots, w^r) \in \WV(W, \beta)$.
	\end{enumerate}
	We denote by $\LC_{W,\eta}$ the set of sequences of $W$-labeled column-composition tableaux of type $\eta$.
\end{definition}

In other words, a sequence of $W$-labeled column-composition tableaux is a tuple of column-composition tableaux of sizes $\beta_1, \dots, \beta_r$ with $c_1(C^1) = 0$, where under column $j$ of $C^i$ we place an element of $X$, $w^i_j$. When read from left to right, the labels under the column give a word $w^1\cdots w^r \in W$. If the vertical bars in each $C^i$ gives a composition rearranging to $\nu^i$, then $\nu^1,\dots,\nu^{r}$ has parts rearranging to $\eta$. See \Cref{figure:SExample} for an example of such a sequence with $\beta = (2,5,2)$. 

For $T = (T_i)_{1 \leq i \leq r} \in \LC_{W,\eta}$, we set $w(T_i) \coloneqq w^i$. Given a function  $\rho$ on word vectors from $W$ that looks right, for $T \in \LC_{W, \eta}$ we set
\begin{align*}
	\weight(T_i) \coloneqq q^{\lvert C^i \rvert} \rho(w^i), &  & \weight(T) \coloneqq \prod_{i=1}^{\ell(\eta)} \weight(T_i), &  & \text{ and } &  & \sign(T) \coloneqq (-1)^b,
\end{align*}
where $b$ stands for the number of vertical bars in $C^1,\dots, C^{\ell(\eta)}$. An example of this weight and sign computation is given in \Cref{figure:SExample}.



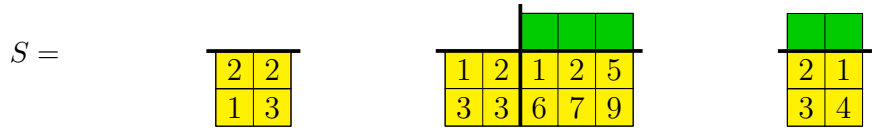
\begin{figure}[ht]
	\begin{align*}
		\begin{tikzpicture}[ xscale=-1]
			\draw (.5,1) node {$S =$};
			\draw (0,0) circle (0cm);
		\end{tikzpicture}                    &  &
		\begin{tikzpicture}[xscale=-1,scale=.5]
			\Yboxdim{1cm}
			\ylw
			\tyng(0cm,-2cm,2,2)
			\draw[line width = .5mm] (-.25,1-1) -- (2.25,1-1);
			\begin{large}
				\draw (.5,.5-1) node {$2$};
				\draw (1.5,.5-1) node {$2$};
				\draw (.5,.5-2) node {$3$};
				\draw (1.5,.5-2) node {$1$};
			\end{large}
		\end{tikzpicture} &  &
		\begin{tikzpicture}[xscale=-1,scale=.5]
			\Yboxdim{1cm}
			\tyng(0cm,0cm,3)
			\ylw
			\tyng(0cm,-2cm,5,5)
			\draw[line width = .5mm] (-.25,1-1) -- (5.25,1-1);
			\draw[line width = .5mm] (3, -2) -- (3,1.25);
			\begin{large}
				\draw (.5,.5-1) node {$5$};
				\draw (1.5,.5-1) node {$2$};
				\draw (2.5,.5-1) node {$1$};
				\draw (3.5,.5-1) node {$2 $};
				\draw (4.5,.5-1) node {$1$};
				\draw (.5,.5-2) node {$9$};
				\draw (1.5,.5-2) node {$7$};
				\draw (2.5,.5-2) node {$6 $};
				\draw (3.5,.5-2) node {$3$};
				\draw (4.5,.5-2) node {$3$};
			\end{large}
		\end{tikzpicture}
		                                                   &  &
		\begin{tikzpicture}[xscale=-1,scale=.5]
			\Yboxdim{1cm}
			\tyng(0cm,0cm,2)
			\ylw
			\tyng(0cm,-2cm,2,2)
			\begin{large}
				\draw[line width = .5mm] (-.25,1-1) -- (2.25,1-1);
				\draw (.5,.5-1) node {$1$};
				\draw (1.5,.5-1) node {$2$};
				\draw (.5,.5-2) node {$4$};
				\draw (1.5,.5-2) node {$3 $};
			\end{large}
		\end{tikzpicture}          	\end{align*}
	\caption{An example of $S = (S_1,S_2,S_3) \in \LC_{W,\eta}$ where $W$ is the family of words with letters in $\mathbb{N}^2$ and $\eta = (3,2,2)$. We have $\sign(S) = -1$.
With $\lwt(w,i)$ being the number of ascents at column $i$ and $wt( (w_{i1},w_{i2}))=y_{w_{i1}} z_{w_{i2}}$, we have $\weight(S_1) = q^0 \cdot q^1 y_2^2 z_1 z_3$, $\weight(S_2) = q^3 \cdot q^{7+6} y_1^2 y_2^2 y_5 z_3^2 z_6 z_7 z_9$, and $\weight(S_3) = q^2 \cdot q^{1} y_1 y_2 z_3 z_4$.
	}
	\label{figure:SExample}
\end{figure}

We now consider, for a fixed partition $\eta$, the rational function
\begin{align*}
	D_{W, \eta}(q) & \coloneqq \sum_{\beta \vDash n} \sum_{\vec{w} \in \WV(W,\beta)} \sum_{\vec{\nu} \in \PR(\eta,\beta)} \rho(\vec{w}) (-1)^{n-\ell(\beta)} (1-q^{\beta_1}) f_{\vec{\nu}}[1/(1-q)] \\
	               & = \sum_{T \in \LC_{W,\eta}} \sign(T) \weight(T).
\end{align*}
We have the following analogue of \cite{IraciRomero2022DeltaTheta}*{Definition~7.1}, which generates a sign-reversing involution on the set $\LC_{W,\eta}.$

\begin{definition}
	\label{Definition:split}
	Let $S = (C, w)$ be one of the possible labeled column-composition tableaux appearing in a sequence $T \in \LC_{W, \eta}$, and suppose that $S$ has at least one bar. Then we say that $S$ can \emph{split} and define $\spl(S) = (S^1, S^2)$, where $S^1 = (C^1, w^1)$ is the portion of $S$ occurring before the first vertical bar, and $S^2 = (C^2, w^2)$ is obtained from the portion of $S$ after the first vertical bar by adding $\sum_{i=1}^{\ell(C^1)} \lwt(w, i)$
	cells to each column. The split map is weight-preserving and sign-reversing.
\end{definition}

\begin{proposition}
	\label{weightpreserving}
	If $\spl(S) = (S^1, S^2)$, then $\weight(S) = \weight(S^1) \cdot \weight(S^2)$.
\end{proposition}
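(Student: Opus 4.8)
The plan is to unwind the definitions of $\weight(S^1)$, $\weight(S^2)$ on one side and $\weight(S)$ on the other, and check that the $q$-powers match. Write $S = (C,w)$ with $C$ having $\ell(C) = \beta$ columns; suppose $C$'s first bar sits after column $m = \ell(C^1)$, so that $C^1$ consists of the first $m$ columns and $C^2$ is obtained from columns $m+1,\dots,\beta$ by adding $d \coloneqq \sum_{i=1}^{m} \lwt(w,i)$ cells to each of its $\beta - m$ columns. Since $w \in W$ looks right, $\rho(w) = \prod_{i=1}^{\beta} q^{\lwt(w,i)(\beta - i)} \wt(w_i)$; correspondingly $w^1 = (w_1,\dots,w_m)$ and $w^2 = (w_{m+1},\dots,w_\beta)$, with $\rho(w^1) = \prod_{i=1}^{m} q^{\lwt(w^1,i)(m-i)}\wt(w_i)$ and $\rho(w^2) = \prod_{i=1}^{\beta-m} q^{\lwt(w^2,i)(\beta - m - i)}\wt(w_{m+i})$. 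The crucial input is that $\lwt(w,i)$ depends only on $w_i$ and $w_{i+1}$, so the local weights of $w$ restricted to positions $1,\dots,m-1$ agree with those of $w^1$, and those restricted to positions $m+1,\dots,\beta-1$ agree with those of $w^2$; the one ``boundary'' local weight $\lwt(w,m)$ is exactly the quantity that got absorbed into the $d$ extra cells of $C^2$.

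The steps, in order: (1) compare $\lvert C \rvert$ with $\lvert C^1 \rvert + \lvert C^2 \rvert$; by construction $\lvert C^2 \rvert = \lvert \text{(columns } m+1,\dots,\beta \text{ of } C)\rvert + d(\beta - m)$, so $\lvert C \rvert = \lvert C^1\rvert + \lvert C^2 \rvert - d(\beta-m)$, i.e. the splitting loses $d(\beta - m)$ cells. (2) Compare $\rho(w)$ with $\rho(w^1)\rho(w^2)$: the $\wt$-factors obviously coincide, so the comparison reduces to the exponent $\sum_{i=1}^{\beta} \lwt(w,i)(\beta-i)$ versus $\sum_{i=1}^{m}\lwt(w,i)(m-i) + \sum_{i=m+1}^{\beta}\lwt(w,i)(\beta - i)$. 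For $i \le m$ the difference in the coefficient of $\lwt(w,i)$ is $(\beta - i) - (m - i) = \beta - m$, so the first block contributes an extra $\bigl(\sum_{i=1}^{m}\lwt(w,i)\bigr)(\beta - m) = d(\beta-m)$; the $i \ge m+1$ terms are identical. Hence $\rho(w) = q^{d(\beta-m)}\rho(w^1)\rho(w^2)$. (3) Combine: $\weight(S) = q^{\lvert C\rvert}\rho(w) = q^{\lvert C^1\rvert + \lvert C^2\rvert - d(\beta-m)}\cdot q^{d(\beta - m)}\rho(w^1)\rho(w^2) = \weight(S^1)\weight(S^2)$, which is the claim.

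I expect the only real obstacle to be bookkeeping: one must be careful that $\lwt(w,m)$ — the local weight straddling the cut — is handled correctly, since it appears in the defining sum for $d$ (it is part of $\sum_{i=1}^{m}\lwt(w,i)$) yet position $m$ is the last position of $w^1$ where it contributes coefficient $m - m = 0$ to $\rho(w^1)$, so it genuinely only survives through the cell-count $d$. Checking that this single term is neither double-counted nor dropped is the heart of the matter; everything else is the telescoping arithmetic in step (2), together with the observation that $\rho$ looking right on subwords (as assumed in \autoref{definition:WV}) is exactly what licenses writing $\rho(w^1)$ and $\rho(w^2)$ with their own local weight functions that restrict those of $w$. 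The sign-reversing assertion is immediate and is not part of this proposition's statement, so I would not belabor it here.
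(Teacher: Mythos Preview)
Your proof is correct and follows essentially the same approach as the paper's: compute that splitting increases the cell count by $d(\beta-m)$, compute that $\rho(w) = q^{d(\beta-m)}\rho(w^1)\rho(w^2)$ via the telescoping of the $(\beta-i)$ versus $(m-i)$ coefficients, and observe that the two changes cancel. Your explicit attention to the boundary term $\lwt(w,m)$ (which contributes to $d$ but has coefficient zero in $\rho(w^1)$) is a point the paper leaves implicit.
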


\begin{proof}[\bf Proof] Let $S = (C, w)$, with $\ell(C) = n$.
	Suppose $\spl(S) = (S^1, S^2)$ with $S^1 = (C^1, w^1)$ and $S^2 = (C^2, w^2)$, and let $v = \ell(C^1)$.

	By definition, the weight has two components, one coming from the total size, and one coming from $\rho$.
	Let $d = \sum_{i=1}^{v} \lwt(w, i)$. By definition of $\spl$, the number of cells above $S^1$ stays the same, while the number of cells above $S^2$ increases by $d \cdot \ell(C_2) = d(n-v)$, so the first component of the total weight increases by $q^{d(n-v)}$.
	By definition of $\rho$, we have
	\begin{align*}
		\rho(w) & = \prod_{i=1}^{n} q^{\lwt(w,i) (n-i)} \wt(w_i)                                                                                                 \\
		        & = \prod_{i=1}^{v} q^{\lwt(w,i) (n-i)} \prod_{i=v+1}^{n} q^{\lwt(w,i) (n-i)} \prod_{i=1}^{n} \wt(w_i)                                           \\
		        & = \prod_{i=1}^{v} q^{\lwt(w,i) (n-v+v-i)} \prod_{i=1}^{n-v} q^{\lwt(w^2,i) (n-v-i)} \prod_{i=1}^{n} \wt(w_i)                                   \\
		        & = \prod_{i=1}^{v} q^{\lwt(w,i) (n-v)} \prod_{i=1}^{v} q^{\lwt(w^1,i) (v-i)} \prod_{i=1}^{n-v} q^{\lwt(w^2,i) (n-v-i)} \prod_{i=1}^{n} \wt(w_i) \\
		        & = \prod_{i=1}^{v} q^{\lwt(w,i) (n-v)} \rho(w^1) \rho(w^2)                                                                                      \\
		        & = q^{d(n-v)} \rho(w^1) \rho(w^2)
	\end{align*}
	so the second component of the total weight decreases by $q^{d(n-v)}$. The two changes cancel out and so the weight is preserved, as desired.
\end{proof}

\begin{lemma}
	Let $S^1 = (C^1, w^1)$, $S^2 = (C^2, w^2)$ be two $W$-labeled column-composition tableaux. There exists an $S$ such that $\spl(S) = (S^1, S^2)$ if and only if
	\begin{equation}
		\label{eq:join}
		c_1(S^2) \geq c_\ell(S^1) + \sum_{j = 1}^{\ell(C^1)} \lwt(w^1w^2, j)
	\end{equation}
	If such an $S$ exists, then it is unique; we say that $S^1$ can join $S^2$ and set $\join(S^1, S^2) = S$.
\end{lemma}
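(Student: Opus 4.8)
The plan is to show that the map $\spl$, when it applies to a labeled column-composition tableau $S$, is injective, and to characterize its image via the inequality~\eqref{eq:join}. First I would unwind the definitions: given $S = (C, w)$ with $\ell(C) = n$ and at least one bar, let the first bar occur after column $v$, so that $S^1 = (C^1, w^1)$ is columns $1, \dots, v$ of $S$ and $S^2 = (C^2, w^2)$ is obtained from columns $v+1, \dots, n$ of $S$ by adding $d := \sum_{i=1}^{v} \lwt(w, i) = \sum_{j=1}^{\ell(C^1)} \lwt(w^1w^2, j)$ cells to each column (the last equality holding because $\lwt(w,i)$ depends only on $w_i$ and $w_{i+1}$, and the letters $w_1, \dots, w_v$ together with $w_{v+1}$ all appear consecutively in $w^1 w^2$). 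The word component clearly determines and is determined by $w^1, w^2$, so the content is entirely about the column sizes $c_1 \leq \dots \leq c_n$ of $C$.

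Next I would extract the necessity of~\eqref{eq:join}. In $C$ the sizes are weakly increasing, so $c_{v+1} \geq c_v$, and moreover, since column $v$ and column $v+1$ are separated by a bar, the column-composition tableau condition ($c_i < c_{i+1} \Rightarrow i \in \Sigma(\alpha)$) places no further constraint there; but after deleting the first $v$ columns and shifting up by $d$, the first column of $S^2$ has size $c_1(S^2) = c_{v+1} + d$, while $c_\ell(S^1) = c_v$. Hence $c_1(S^2) = c_{v+1} + d \geq c_v + d = c_\ell(S^1) + \sum_{j=1}^{\ell(C^1)} \lwt(w^1w^2, j)$, which is exactly~\eqref{eq:join}. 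Conversely, given $S^1, S^2$ satisfying~\eqref{eq:join}, I would construct $S = \join(S^1, S^2)$ by concatenating the words $w^1 w^2$, inserting a bar between position $v = \ell(C^1)$ and $v+1$, keeping the columns of $C^1$ unchanged, and replacing each column size $c_j(S^2)$ by $c_j(S^2) - d$. One then checks that the resulting sequence of column sizes is weakly increasing — the only new adjacency is $c_v = c_\ell(S^1) \leq c_1(S^2) - d$, which is precisely~\eqref{eq:join} — and that the column-composition tableau condition is met: within the $C^1$ part and within the shifted $C^2$ part it holds by hypothesis (shifting all columns of $C^2$ by the same amount preserves the relation between consecutive sizes), and across the new bar it is automatic since the bar lies in $\Sigma(\alpha)$. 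Uniqueness is immediate: the position of the first bar is forced to be $\ell(C^1)$, the word is forced to be $w^1 w^2$, the first $v$ columns are forced to equal those of $C^1$, and the shift $d$ is determined by $w^1$ and $w^2$, hence so are the remaining columns.

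I would then note that $\spl(\join(S^1,S^2)) = (S^1, S^2)$ and, conversely, that $\join(\spl(S)) = S$ whenever $S$ splits — the latter because $\spl$ removes exactly the first bar and shifts, and $\join$ reinserts a bar at the same place and shifts back by the same $d$. Combined with \autoref{weightpreserving}, this gives the desired mutually inverse, weight-preserving, sign-reversing bijection between $\{S : S \text{ has a bar}\}$ and $\{(S^1, S^2) : \eqref{eq:join} \text{ holds}\}$.

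The main obstacle I anticipate is bookkeeping around the shift $d$: one must be careful that $\lwt(w, i)$ for $i \leq v$ computed inside $S$ agrees with $\lwt(w^1 w^2, j)$ computed from the concatenation $w^1 w^2$, which is where the hypothesis ``$\lwt(w,i)$ depends only on $w_i$ and $w_{i+1}$'' is essential — deleting the columns after the bar does not disturb any of the pairs $(w_i, w_{i+1})$ for $i < v$, and the pair $(w_v, w_{v+1})$ survives in $w^1 w^2$. Everything else is a routine verification that the column-composition tableau axioms are preserved under the shift, which is transparent once one observes the shift is uniform.
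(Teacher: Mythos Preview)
Your proof is correct and follows essentially the same approach as the paper's own proof: showing necessity of~\eqref{eq:join} directly from the definition of $\spl$, constructing $S$ by shifting the columns of $S^2$ down by $d$ and concatenating, and verifying that~\eqref{eq:join} is exactly what guarantees the resulting column sizes remain weakly increasing across the new bar. The paper's proof is much terser (``by construction'', ``immediate''), and you have simply filled in the bookkeeping details --- including the observation that $\lwt(w,i) = \lwt(w^1w^2,i)$ for $i \leq v$ because $\lwt$ depends only on adjacent letters --- which the paper leaves implicit.
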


\begin{proof}[\bf Proof]
	If such an $S$ exists, then \Cref{eq:join} holds by construction. Suppose that \Cref{eq:join} holds. Then we can define $S$ as the labeled column composition tableau obtained by decreasing the size of each column of $S^2$ by $\sum_{j = 1}^{\ell(C^1)} \lwt(w^1w^2, j)$ and then concatenating it to $S^1$, also concatenating their words. \Cref{eq:join} ensures that the result is still a column-composition tableau. It is now immediate that $\spl(S) = (S^1, S^2)$ and that such an $S$ is unique.
\end{proof}

The following lemma is crucial to ensure that our sign-reversing, weight-preserving involution is well-defined.

\begin{lemma}
	\label{splitlemma}
	Let $S^1$, $S$ be two $W$-labeled column-composition tableaux, and let $\spl(S) = (S^2, S^3)$. Then $S^1$ can join $S^2$ if and only if it can join $S$.
\end{lemma}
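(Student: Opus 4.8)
The plan is to reduce the statement to a comparison of the two relevant join-inequalities, using the explicit form of the local-weight term in \autoref{eq:join}. Write $S^1=(C^1,w^1)$ and $S=(C,w)$, and let $\spl(S)=(S^2,S^3)$ with $S^2=(C^2,w^2)$, $S^3=(C^3,w^3)$. By the \autoref{Definition:split} and the preceding lemma, the words involved satisfy $w=w^2w^3$ and $C^2$ is the portion of $C$ before its first bar, so $\ell(C^2)$ is the length of that first block. The key structural fact is that $w^1$ can join $w^2$ precisely when
\begin{equation*}
	c_1(S^2)\ \geq\ c_{\ell}(S^1)\ +\ \sum_{j=1}^{\ell(C^1)}\lwt(w^1w^2,j),
\end{equation*}
while $w^1$ can join $S$ precisely when
\begin{equation*}
	c_1(S)\ \geq\ c_{\ell}(S^1)\ +\ \sum_{j=1}^{\ell(C^1)}\lwt(w^1w,j).
\end{equation*}
So I would first observe that $c_1(S)=c_1(S^2)$: splitting $S$ at its first bar does not touch the first column, it only adds cells to the columns \emph{after} the bar, and in particular $C^2$ inherits the size of the first column of $C$ unchanged. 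Thus the two right-hand sides already have equal left-hand sides, and the whole lemma comes down to showing the two sums over $j=1,\dots,\ell(C^1)$ agree.

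For that, the crucial point is the locality hypothesis built into \autoref{looksright}: $\lwt(v,j)$ depends only on $v_j$ and $v_{j+1}$. For $1\le j<\ell(C^1)$ the letters $v_j,v_{j+1}$ are both letters of $w^1$, so the contribution is the same whether we compute $\lwt(w^1w^2,j)$ or $\lwt(w^1w,j)$ — it equals $\lwt(w^1,j)$ in both cases, since the part of the word past position $\ell(C^1)$ is irrelevant. The only term that could differ is $j=\ell(C^1)$, where $v_j$ is the last letter of $w^1$ and $v_{j+1}$ is the \emph{first letter of the appended word}. Here the point is that $w^2$ and $w$ have the same first letter: $w^2$ is an initial segment of $w=w^2w^3$, and it is nonempty (a column-composition tableau with a splittable structure has $\ell(C^2)\ge 1$), so $(w^2)_1=(w)_1$. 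Hence the $j=\ell(C^1)$ term also matches, the two sums coincide, and the two join-conditions are literally the same inequality.

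I would then conclude: since $w^1$ can join $S^2$ iff $w^1$ can join $S$ as \emph{words} (same inequality), and the construction of the join in the preceding lemma is determined by this inequality together with the data of $S^1$ and the word, the join exists in one case iff it exists in the other. One should also note the compatibility on the $\PR(\eta,\beta)$ side — i.e.\ that joining $S^1$ to $S^2$ produces a valid column-composition tableau whose block-partition-vector is admissible exactly when the same is true for joining $S^1$ to $S$ — but this is automatic because $S^3$ is unaffected and the first-block shape produced by the join is the same in both scenarios. I expect the main (and essentially only) obstacle to be bookkeeping the index shift at $j=\ell(C^1)$ correctly: one must be careful that ``$w^2$ is the portion before the first bar of $w$'' really does guarantee $(w^2)_1=(w)_1$ and that $\lwt$ at the seam position genuinely only sees those two letters; once that is pinned down, the rest is a one-line identification of two inequalities.
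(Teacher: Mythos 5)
Your proposal is correct and follows essentially the same route as the paper: the paper's proof consists of the single observation that $c_1(S^2) = c_1(S)$ by construction of $\spl$, so \autoref{eq:join} holds for one pair iff it holds for the other. Your additional verification that the sums $\sum_{j=1}^{\ell(C^1)}\lwt(w^1w^2,j)$ and $\sum_{j=1}^{\ell(C^1)}\lwt(w^1w,j)$ coincide — because $\lwt$ is local and $w^2$ is a nonempty prefix of $w$, so the seam term at $j=\ell(C^1)$ sees the same pair of letters — is exactly the detail the paper leaves implicit, and you have pinned it down correctly.
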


\begin{proof}[\bf Proof]
	By construction, $c_1(S^2) = c_1(S)$, so \Cref{eq:join} holds for $S^1$ and $S^2$ if and only if it holds for $S^1$ and $S$.
\end{proof}

Again, for a fixed set of words $W$ and a partition $\eta$, we can now define our weight-preserving, sign-reversing involution as follows.
\begin{definition}
	Given $T = (T_1,\dots, T_r) \in \LC_{W,\eta}$, define $\psi(T)$ by the following process:
	\begin{enumerate}
		\item if $r=0$, then $\psi(T) = T$;
		\item if $T_1$ can split, then $\psi(T) = (\spl(T_1), T_2,\dots, T_r)$;
		\item if $T_1$ cannot split and $T_1$ can join $T_2$, then $\psi(T) = (\join(T_1, T_2), T_3,\dots, T_r)$;
		\item otherwise we inductively define $\psi(T) = (T_1, \psi(T_2, \dots, T_r))$.
	\end{enumerate}
See \Cref{figure:splitSExample} for an example of the image of $S$ in \Cref{figure:SExample}.
\end{definition}

\begin{figure}[ht]
	\begin{align*}
		\begin{tikzpicture}[ xscale=-1]
			\draw (.5,1) node {$\psi(S) =$};
			\draw (0,0) circle (0cm);
		\end{tikzpicture}                    &  &
		\begin{tikzpicture}[xscale=-1,scale=.5]
			\Yboxdim{1cm}
			\ylw
			\tyng(0cm,-2cm,2,2)
			\draw[line width = .5mm] (-.25,1-1) -- (2.25,1-1);
			\begin{large}
				\draw (.5,.5-1) node {$2$};
				\draw (1.5,.5-1) node {$2$};
				\draw (.5,.5-2) node {$3$};
				\draw (1.5,.5-2) node {$1$};
			\end{large}
		\end{tikzpicture} &  &
		\begin{tikzpicture}[xscale=-1,scale=.5]
			\Yboxdim{1cm}
			\ylw
			\tyng(0cm,-2cm,2,2)
			\draw[line width = .5mm] (-.25,1-1) -- (2.25,1-1);
			\begin{large}
				\draw (.5,.5-1) node {$2$};
				\draw (1.5,.5-1) node {$1$};
				\draw (.5,.5-2) node {$3$};
				\draw (1.5,.5-2) node {$3$};
			\end{large}
		\end{tikzpicture}
		                                                   & &
		\begin{tikzpicture}[xscale=-1,scale=.5]
			\Yboxdim{1cm}
			\tyng(0cm,0cm,3,3,3)
			\ylw
			\tyng(0cm,-2cm,3,3)
			\draw[line width = .5mm] (-.25,1-1) -- (3.25,1-1);
			\begin{large}
				\draw (.5,.5-1) node {$5$};
				\draw (1.5,.5-1) node {$2$};
				\draw (2.5,.5-1) node {$1$};
				\draw (.5,.5-2) node {$9$};
				\draw (1.5,.5-2) node {$7$};
				\draw (2.5,.5-2) node {$6 $};
			\end{large}
		\end{tikzpicture}
		                                                   &  &
		\begin{tikzpicture}[xscale=-1,scale=.5]
			\Yboxdim{1cm}
			\tyng(0cm,0cm,2)
			\ylw
			\tyng(0cm,-2cm,2,2)
			\begin{large}
				\draw[line width = .5mm] (-.25,1-1) -- (2.25,1-1);
				\draw (.5,.5-1) node {$1$};
				\draw (1.5,.5-1) node {$2$};
				\draw (.5,.5-2) node {$4$};
				\draw (1.5,.5-2) node {$3 $};
			\end{large}
		\end{tikzpicture}         
		 	\end{align*}
	\caption{An example of $\psi(S)$ for $S$ in \Cref{figure:SExample}.
		}
	\label{figure:splitSExample}
\end{figure}
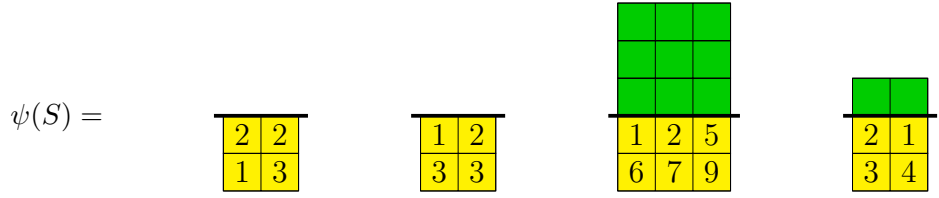

For $T \in \LC_{W,\eta}$ with no vertical bars, we have that for any $i$, $c_j(T_i)$ does not depend on $j$, as having no vertical bars implies that the number of cells above the base is constant. Let us write $c(T_i) \coloneqq c_j(T_i)$ in this case. Note that $c(T_1) = 0$ by definition. We have the following analogue of \cite{IraciRomero2022DeltaTheta}*{Theorem~7.8}.

\begin{proposition}
	Let $U_{W,\eta} $ be the set of $T \in \LC_{W,\eta}$ such that $T$ has no vertical bars, and for all $i$,
	\begin{align*}
		c(T_{i+1}) < c(T_i) + \sum_{j = 1}^{\ell(C^i)} \lwt \Big(w, \sum_{k=1}^{i-1} \ell(C^k) + j \Big).
	\end{align*}
	Then $D_{W,\eta}(q)$ is given by the weight-sum over $U_{W,\eta}$:
	\[
		D_{W,\eta}(q) = \sum_{T \in U_{W,\eta}} {\weight(T)}.
	\]
\end{proposition}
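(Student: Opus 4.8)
The plan is to establish this as a standard application of the sign-reversing involution $\psi$ introduced just above the statement. First I would observe that $\psi$ is indeed a well-defined involution on $\LC_{W,\eta}$: the split and join operations are mutually inverse by the preceding two lemmas, and \autoref{splitlemma} guarantees that whether $T_1$ can join $T_2$ is unchanged if we first split $T_2$, so the three cases in the definition of $\psi$ are consistently dovetailed with the recursive fourth case. The key structural point is that $\psi$ strictly changes the number of vertical bars by exactly one whenever $T$ has at least one bar somewhere, so the only fixed points of $\psi$ are the bar-free tuples; moreover, by \autoref{weightpreserving} together with the fact that split (hence join) is sign-reversing, $\psi$ is weight-preserving and sign-reversing on non-fixed points.

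Given this, the alternating sum $D_{W,\eta}(q) = \sum_{T \in \LC_{W,\eta}} \sign(T)\weight(T)$ collapses to the sum over fixed points, which are exactly the bar-free $T$. On such a $T$ every $C^i$ has constant column height $c(T_i)$, with $c(T_1)=0$, and $\sign(T)=1$. So I would next identify precisely which bar-free tuples are fixed points — i.e., verify that a bar-free $T$ is a fixed point of $\psi$ if and only if $T_i$ cannot join $T_{i+1}$ for every $i$, which by the join criterion \autoref{eq:join} (applied with $S^1 = T_i$ and using that the relevant local weights are read off positions $\sum_{k<i}\ell(C^k)+j$ in the global word $w$) is exactly the inequality
\[
c(T_{i+1}) < c(T_i) + \sum_{j=1}^{\ell(C^i)} \lwt\Big(w,\ \textstyle\sum_{k=1}^{i-1}\ell(C^k)+j\Big).
\]
This is the defining condition of $U_{W,\eta}$, so the fixed-point set is precisely $U_{W,\eta}$ and the formula follows.

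The main obstacle I anticipate is a bookkeeping subtlety rather than a conceptual one: making sure that the ``can join'' test used inside $\psi$ — which in the definition is stated locally for consecutive pieces $T_i$, $T_{i+1}$ — is faithfully translated into the global-word local weights appearing in the statement of $U_{W,\eta}$. One must check that $\lwt(w,i)$ depends only on $w_i$ and $w_{i+1}$ (part of \autoref{looksright}), so that slicing the global word $w$ into the pieces $w^i$ does not alter the local weights at interior positions, and that at the junction between $w^i$ and $w^{i+1}$ the relevant index $\sum_{k\le i}\ell(C^k)$ is exactly the last position of $w^i$ — which is where the join inequality is evaluated. A second point to handle carefully is why $\psi$ terminates and is genuinely an involution under the recursive clause (4): here one argues by induction on $r$, using that clause (4) only fires when $T_1$ neither splits nor joins, and that in that case $\psi$ does not touch $T_1$ and acts as the analogous involution on $(T_2,\dots,T_r)$, so an application of $\psi$ followed by another recovers $T$ by the inductive hypothesis together with \autoref{splitlemma}. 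Once these compatibility checks are in place, the proposition is immediate from the cancellation.
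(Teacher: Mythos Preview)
Your proposal is correct and is precisely the argument the paper intends: the paper's own proof is simply ``Same as \cite{IraciRomero2022DeltaTheta}*{Theorem~7.8}'', and the sign-reversing involution $\psi$ you analyze is exactly the mechanism of that cited result, transported to the present weighted setting. Your bookkeeping on the global-versus-local $\lwt$ indices and your use of \autoref{splitlemma} to verify that $\psi$ is an involution under clause~(4) are the right checks, and your identification of the fixed points with $U_{W,\eta}$ via the negation of \autoref{eq:join} is accurate.
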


\begin{proof}[\bf Proof]
	Same as \cite{IraciRomero2022DeltaTheta}*{Theorem~7.8}.
\end{proof}

We can now describe the fixed points in terms of parallelogram polyominoes.

\begin{definition}
	\label{def:polyomino}
	For integers $m$ and $n$, a \emph{parallelogram polyomino} of size $m \times n$ is a pair of lattice paths $(P,Q)$ from $(0,0)$ to $(m,n)$, consisting of unit north steps and east steps, such that $P$ (the \emph{top path}) lies always strictly above $Q$ (the \emph{bottom path}), except on the endpoints.
\end{definition}

The \emph{area} of a parallelogram polyomino of size $m \times n$ is defined as \[ \area(P,Q) \coloneqq (\#\text{ of lattice cells between $P$ and $Q$}) - (m+n-1). \]
Since the two paths $P$ and $Q$ do not touch between the endpoints, $m+n-1$ is the minimal number of unit cells between them. One also sees that $\area(P,Q)$ counts the number of lattice cells between $P$ and $Q$ which do not touch $Q$.

\begin{figure}[!ht]
	\begin{center}
		\includegraphics{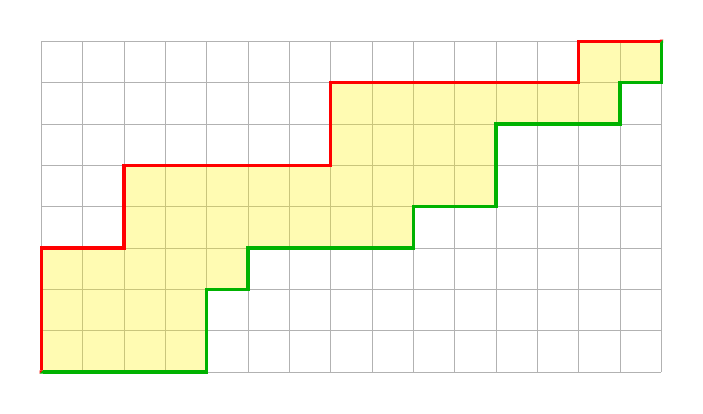}
	\end{center}
	\caption{A parallelogram polyomino with area $20$.}
\end{figure}

\begin{definition}
	\label{WLPP-definition}
	An $m \times n$, \emph{$W$-labeled parallelogram polyomino} is a triple $(P,Q,w)$ such that $(P,Q)$ is an $m \times n$ parallelogram polyomino, and $w \in W \cap X^n$ is a word in $W$ of lenght $n$.
\end{definition}

\begin{definition}
	Given $\rho \colon W \rightarrow A[q]$ that looks right, so that
	\[
		\rho(w) = \prod_{i=1}^{\ell(w)} q^{\lwt(w,i) (\ell(w)-i)} \wt(w^i) \in A[q]
	\]
	for some appropriate weight functions $\wt$ and $\lwt$, we say that a parallelogram polyomino $(P,Q,w)$ is \emph{$\rho$-compatible} if $Q$ contains exactly $\lwt(w,i) + \delta_{i,1}$ east steps on the line $y=i-1$.
\end{definition}

\begin{definition}
	Given $\eta \vdash n$, we say that a $W$-labeled parallelogram polyomino $(P,Q,w)$ has \emph{type $\eta$} if the lengths of the maximal streaks of  north steps of $P$ form a composition $\alpha \in \R(\eta)$.
\end{definition}

Let $\PP_{W,\eta}$ be the set of all $W$-labeled parallelogram polyominoes of type $\eta$ that are $\rho$-compatible. We have the following.

\begin{proposition}
	\label{proposition:phi}
	There is a bijection $\varphi \colon U_{W, \eta} \leftrightarrow \PP_{W,\eta}$ such that \[ \weight(T) = q^{\area(\varphi(T))} \prod_{i=1}^{\ell(w)} \wt(w_i). \]
\end{proposition}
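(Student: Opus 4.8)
The plan is to construct the bijection $\varphi$ explicitly by reading off, from a fixed point $T \in U_{W,\eta}$, the data needed to build the pair of lattice paths $(P,Q)$ together with its label word. First I would unwind what a fixed point looks like: since $T = (T_1,\dots,T_r) \in U_{W,\eta}$ has no vertical bars, each $T_i = (C^i, w^i)$ has all columns of $C^i$ of a constant height $c(T_i)$, with $c(T_1)=0$; the sequence of column-heights is strictly controlled by the inequality $c(T_{i+1}) < c(T_i) + \sum_{j=1}^{\ell(C^i)} \lwt(w, \sum_{k<i}\ell(C^k)+j)$, and the lengths $\beta_i = \ell(C^i)$ form a composition $\beta \vDash n$ whose associated partition-vector $\vec\nu \in \PR(\eta,\beta)$ has parts rearranging to $\eta$. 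The concatenation $w = w^1\cdots w^r \in W \cap X^n$ is the label word.

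Next I would describe $\varphi(T) = (P,Q,w)$. The word $w$ is carried over verbatim. The top path $P$ is built so that its maximal north-streaks have lengths $\beta_1, \beta_2, \dots, \beta_r$ read in order — this is exactly the composition of block-lengths, which rearranges to $\eta$, so $(P,Q,w)$ will have type $\eta$. The bottom path $Q$ is forced by the $\rho$-compatibility requirement: on the line $y = i-1$ there must be exactly $\lwt(w,i) + \delta_{i,1}$ east-steps, which pins down $Q$ up to the choice of where the blocks sit horizontally; that horizontal data is precisely encoded by the column-heights $c(T_i)$. Concretely, the height $c(T_i)$ records how far the $i$-th north-streak of $P$ is displaced relative to $Q$, i.e. the number of full cells stacked above the base in that block; the strict inequality defining $U_{W,\eta}$ translates exactly into the condition that $P$ stays strictly above $Q$ between the endpoints (the polyomino does not pinch), and $c(T_1)=0$ corresponds to $P$ and $Q$ sharing the initial corner. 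I would then verify that this assignment is reversible: from a $\rho$-compatible $W$-labeled parallelogram polyomino of type $\eta$ one reads the north-streak lengths to recover $\beta$, factors $\eta$ accordingly to get $\vec\nu$ (the multiset of streak lengths, sorted within each part), reads off the vertical displacements to recover the $c(T_i)$, hence the $C^i$, and restores $w$; the $\rho$-compatibility of $Q$ guarantees the east-step counts match $\lwt$, and the polyomino's strict-nesting condition gives back the defining inequality of $U_{W,\eta}$.

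The weight check is then essentially bookkeeping. Recall $\weight(T) = \prod_i \weight(T_i) = \prod_i q^{|C^i|}\rho(w^i)$, and $|C^i| = \beta_i\, c(T_i)$ since all $\beta_i$ columns of $C^i$ have height $c(T_i)$. On the polyomino side, $\area(\varphi(T))$ counts lattice cells between $P$ and $Q$ not touching $Q$; grouping these cells by which north-streak of $P$ they sit under, the block of width given by the $i$-th streak contributes exactly $\beta_i\, c(T_i)$ such cells once the $\rho$-compatibility normalization (subtracting the $m+n-1$ forced cells, equivalently the $\lwt$-prescribed east-steps) is accounted for — this is where the "$q^{\lwt(w,i)(\ell(w)-i)}$" factors inside $\rho(w)$ get absorbed into the area, leaving behind only $\prod_i \wt(w_i)$. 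So $\sum_i |C^i| = \area(\varphi(T))$ after moving the $\lwt$-part of $\rho(w)$ into the area statistic, and $\prod_i \rho(w^i)$ splits (by the "looks right" factorization, exactly as in \autoref{weightpreserving}) into $q^{(\text{contribution to area})}\prod_i \wt(w_i)$, yielding $\weight(T) = q^{\area(\varphi(T))}\prod_{i=1}^{\ell(w)}\wt(w_i)$ as claimed.

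The main obstacle I anticipate is getting the horizontal alignment of $Q$ relative to $P$ exactly right, i.e. proving that the $\rho$-compatibility condition together with the displacements $c(T_i)$ uniquely and consistently determines $Q$ as a genuine lattice path staying weakly inside the bounding box and strictly below $P$ except at the corners — this is the crux where the $U_{W,\eta}$-inequality and the $\lwt$-prescribed east-step counts have to conspire, and verifying it cleanly likely mirrors the argument of \cite{IraciRomero2022DeltaTheta}*{Theorem~7.8} but must be redone with the general weight functions $\wt$ and $\lwt$ in place of the constant weights there. Once that compatibility is pinned down, injectivity, surjectivity, and the weight identity all follow by the matching of data described above.
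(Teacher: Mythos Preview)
Your proposal is correct and follows essentially the same approach as the paper: both construct $\varphi$ by letting the north-streak lengths of $P$ be the $\beta_i$, determining $Q$ entirely from the $\rho$-compatibility condition (so $Q = E\prod_i E^{\lwt(w,i)}N$), encoding the horizontal offsets of $P$ via the heights $c(T_i)$ (the paper makes this explicit with $s_i = c(T_i) + \sum_{j}\lwt(w,\Sigma_{i-1}+j) - c(T_{i+1})$), and then verifying that the $U_{W,\eta}$ inequality is exactly the strict-nesting condition and that $\area(P,Q) = \sum_i\bigl(|C^i| + \sum_j \lwt(w^i,j)(\beta_i-j)\bigr)$, which is precisely $\sum_i|C^i|$ plus the $q$-exponent coming from $\rho$. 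The only place where you are a bit loose is in the weight bookkeeping (your sentence ``$\sum_i|C^i|=\area(\varphi(T))$ after moving the $\lwt$-part\ldots'' should be read as $\area = \sum_i|C^i| + (\text{$\lwt$-part})$, which is what the paper computes directly via $\area(P)-\area(Q)-(m+n-1)$), but the intended identity is the right one.
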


\begin{proof}[\bf Proof]
	For $T = (C^i, w^i)_{1 \leq i \leq r} \in U_{W,\eta}$, we need to define a triple \[ \varphi(T) = (P(T),Q(T),w(T)) \] corresponding to the polyomino and its labels. An example of $\varphi$ in the context of \Cref{figure:SExample} is given in \Cref{figure:PolyominoBijectionExample}.
	
	For $1 \leq i \leq r$, let \[ \beta_i = \ell(C^i), \qquad \Sigma_i = \sum_{j=1}^{i} \beta_i, \qquad s_i = c(T_i) + \sum_{j = 1}^{\beta_i} \lwt(w, \Sigma_{i-1} + j) - c(T_{i+1}); \] the latter is guaranteed to be strictly positive because $T \in U_{W, \eta}$. We define
	\begin{itemize}
		\item $P = P(T) \coloneqq \left( \prod_{i=1}^r N^{\beta_i} E^{s_i} \right) E$
		\item $Q = Q(T) \coloneqq E \left( \prod_{i=1}^{n} E^{\lwt(w,i)} N \right)$
		\item $w = w(T) \coloneqq w^1 w^2 \cdots w^r$
	\end{itemize}
	where the product denotes the ordered concatenation of strings. 
	We claim that $\varphi$ is the desired bijection.

	First we need to show that it is well-defined, i.e.\ $(P,Q,w)$ is a $W$-labeled parallelogram polyomino of type $\eta$ that is $\rho$-compatible. Indeed, the lengths of the maximal streaks of  north steps of $P$ form a composition $\alpha \in \R(\eta)$, $Q$ satisfies the $\rho$-compatibility condition by construction, and $w$ is a $W$-labeling. We need to show that $P$ and $Q$ have the same size and that $P$ always lies strictly above $Q$.

	The total height of $P$ is $\sum_{i=1}^r \beta_i = n$, and the total height of $Q$ is $n$, so they agree. The total width of $P$ is
	\begin{align*}
		1 + \sum_{i=1}^r s_i & = 1 + \sum_{i=1}^r \left( c(T_i) + \sum_{j = 1}^{\beta_i} \lwt(w, \Sigma_{i-1} + j) - c(T_{i+1}) \right) \\
		                     & = 1 + \sum_{i=1}^r (c(T_i) - c(T_{i+1})) + \sum_{i=1}^r \sum_{j=1}^{\beta_i} \lwt(w, \Sigma_{i-1} + j)   \\
		                     & = 1 + c(T_1) + \sum_{i=1}^n \lwt(w, i)                                                                   \\
		                     & = 1 + \sum_{i=1}^n \lwt(w, i),
	\end{align*}
	which is exactly the total width of $Q$, as expected. To show that $P$ lies always strictly above $Q$, it is enough to check that, for $1 \leq k \leq r$, we have \[ \sum_{i=1}^k s_i \leq \sum_{i=1}^{\Sigma_k} \lwt(w, i). \]
	But this is immediate because the difference of the two quantities is $c(T_{i+1}) \geq 0$.

	Finally, we have to show that \[ \area(P,Q) = \sum_{i=1}^r \left( \lvert C^i \rvert + \sum_{j=1}^{\beta_i} \lwt(w^i, j)(\beta_i - j) \right). \]
	Let $\area(P)$ (and $\area(Q)$) be the number of whole cells between $P$ (respectively $Q$) and the bottom-right semi-perimeter of the rectangle containing $(P,Q)$. Then $\area(P,Q) = \area(P) - \area(Q) - (m+n-1)$, where $m$ is the width of the path.

	We can compute the first terms factors as follows: for each  north step of the relevant path, we count the number of east steps that follow it and take the sum of these quantities over all  north steps. In this way, for the first summand, we get
	\begin{align*}
		\area(P) & = \sum_{k=1}^r \beta_k \left( 1 + \sum_{i=k}^r s_i \right)                                                                    \\
		         & = \sum_{k=1}^r \beta_k \sum_{i=k}^r \left( 1 + c(T_i) + \sum_{j = 1}^{\beta_i} \lwt(w, \Sigma_{i-1} + j) - c(T_{i+1}) \right) \\
		         & = \sum_{k=1}^r \beta_k \sum_{i=k}^r \left( 1 + c(T_i) - c(T_{i+1}) \right)
		+ \sum_{k=1}^r \beta_k \sum_{i=k}^r \sum_{j = 1}^{\beta_i} \lwt(w, \Sigma_{i-1} + j)                                                     \\
		         & = \sum_{k=1}^r \beta_k \left( c(T_k) + 1 \right) + \sum_{k=1}^r \beta_k
		\sum_{i=k}^r \sum_{j = 1}^{\beta_i} \lwt(w, \Sigma_{i-1} + j)                                                                            \\
		         & = n + \sum_{k=1}^r \lvert C^k \rvert + \sum_{i=1}^r \sum_{j = 1}^{\beta_i} \sum_{k=1}^i \beta_k \lwt(w, \Sigma_{i-1} + j).
	\end{align*}
	For the second summand, we can write
	\begin{align*}
		\area(Q) & = \sum_{i=1}^n \left( \sum_{j=i+1}^n \lwt(w,j) \right)                                                        \\
		         & = \sum_{j=1}^n (j-1) \lwt(w,j)                                                                                \\
		         & = \sum_{i=1}^r \sum_{j=1}^{\beta_i} \left( \sum_{k=1}^{i-1} \beta_k + j - 1 \right) \lwt(w, \Sigma_{i-1} + j).
	\end{align*}
	Finally, for the last summand, we have
	\begin{align*}
		m+n-1 & = \left( 1 + \sum_{i=1}^n \lwt(w, i) \right) + n - 1               \\
		      & = n + \sum_{i=1}^n \lwt(w, i)                                      \\
		      & = n + \sum_{i=1}^r \sum_{j=1}^{\beta_i} \lwt(w, \Sigma_{i-1} + j).
	\end{align*}
	Since $\area(P,Q) = \area(P) - \area(Q) - (m+n-1)$, we have
	\begin{align*}
		\area(P,Q) & = \sum_{k=1}^r \lvert C^k \rvert + \sum_{i=1}^r \sum_{j = 1}^{\beta_i} \lwt(w, \Sigma_{i-1} + j)
		\left( \sum_{k=1}^i \beta_k - \left( \sum_{k=1}^{i-1} \beta_k + j - 1 \right) - 1 \right)                                    \\
		           & = \sum_{k=1}^r \lvert C^k \rvert + \sum_{i=1}^r \sum_{j = 1}^{\beta_i} \lwt(w, \Sigma_{i-1} + j)(\beta_i - j)   \\
		           & = \sum_{i=1}^r \left( \lvert C^i \rvert + \sum_{j = 1}^{\beta_i} \lwt(w, \Sigma_{i-1} + j)(\beta_i - j) \right) \\
		           & =  \sum_{i=1}^r \left( \lvert C^i \rvert + \sum_{j=1}^{\beta_i} \lwt(w^i, j)(\beta_i - j) \right)
	\end{align*}
	because $\lwt(w^i, j) = \lwt(w, \Sigma_{i-1} + j)$ whenever $j \neq \beta_i$; and when $j = \beta_i$ the corresponding summand above is $0$. Therefore, the equality holds and the thesis follows immediately.
\end{proof}

\begin{figure}[ht]
	\begin{align*}
		\begin{tikzpicture}[xscale=-1,scale=.5]
			\draw[draw=none, use as bounding box] (-1,-4) rectangle (3,4);
			\Yboxdim{1cm}
			\ylw
			\tyng(0cm,-2cm,3,3)
			\draw[line width = .5mm] (-.25,1-1) -- (3.25,1-1);
			\begin{large}
				\draw (.5,.5-1) node {$1$};
				\draw (1.5,.5-1) node {$2$};
				\draw (2.5,.5-1) node {$1$};
				\draw (.5,.5-2) node {$3$};
				\draw (1.5,.5-2) node {$3$};
				\draw (2.5,.5-2) node {$2$};
			\end{large}
		\end{tikzpicture}
		\begin{tikzpicture}[xscale=-1,scale=.5]
			\draw[draw=none, use as bounding box] (-1,-4) rectangle (3,4);
			\Yboxdim{1cm}
			\tyng(0cm,0cm,2,2)
			\ylw
			\tyng(0cm,-2cm,2,2)
			\draw[line width = .5mm] (-.25,1-1) -- (2.25,1-1);
			\begin{large}
				\draw (.5,.5-1) node {$5$};
				\draw (1.5,.5-1) node {$5$};
				\draw (.5,.5-2) node {$2$};
				\draw (1.5,.5-2) node {$3$};
			\end{large}
		\end{tikzpicture}
		\begin{tikzpicture}[xscale=-1,scale=.5]
			\draw[draw=none, use as bounding box] (-1,-4) rectangle (4,4);
			\Yboxdim{1cm}
			\tyng(0cm,0cm,3,3)
			\ylw
			\tyng(0cm,-2cm,3,3)
			\draw[line width = .5mm] (-.25,1-1) -- (3.25,1-1);
			\begin{large}
				\draw (.5,.5-1) node {$6$};
				\draw (1.5,.5-1) node {$2$};
				\draw (2.5,.5-1) node {$2$};
				\draw (.5,.5-2) node {$2$};
				\draw (1.5,.5-2) node {$3$};
				\draw (2.5,.5-2) node {$3$};
			\end{large}
		\end{tikzpicture}
		\begin{tikzpicture}[ xscale=1]
			\draw[draw=none, use as bounding box] (-1,-2) rectangle (1.5,2);
			\draw (0,0) node {\raisebox{6pt}{$\overset{\varphi}{{{\longrightarrow}}}$}};
		\end{tikzpicture}
		\includegraphics[scale=0.5]{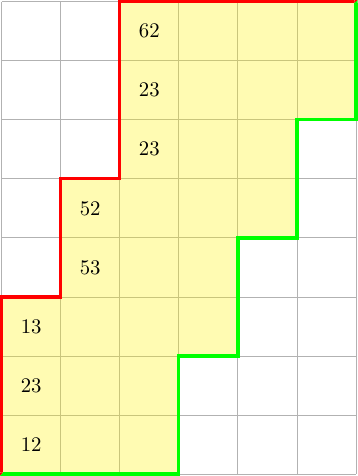}
	\end{align*}
	\caption{An example of the bijection $\varphi$ for a fixed point of $\psi$ in the context of \Cref{figure:SExample}.}
	\label{figure:PolyominoBijectionExample}
\end{figure}

\begin{corollary}
	\label{cor:1}
	For any $\eta \vdash n$, \[ D_{W,\eta} = \sum_{P \in \PP_{W,\eta}} q^{\area(P)} \wt(P), \]
	where if $(w^1,\dots, w^r) = w(\varphi^{-1}(P)) $, we set $\wt(P) = \prod_i \wt(w^i)$.
	In particular, if
	\[ F = \sum_{ \eta \vdash n }  e_\eta \sum_{\beta \vDash n} \sum_{\vec{w} \in \WV(W,\beta)} 
		\sum_{\vec{\nu} \in \PR(\eta,\beta)} \rho(\vec{w}) (-1)^{n-\ell(\beta)} (1-q^{\beta_1}) f_{\vec{\nu}}[1/(1-q)], \]
	then \[ F = \sum_{\eta \vdash n} e_\eta \sum_{ P \in \PP_{W,\eta}} q^{\area(P)} \wt(P). \]
\end{corollary}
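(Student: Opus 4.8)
The plan is to assemble the corollary directly from the identities already established, treating the two displayed claims as an immediate consequence of the combinatorial reformulation of $D_{W,\eta}(q)$ together with the bijection $\varphi$ of \autoref{proposition:phi}. First I would recall that, by the definition of $D_{W,\eta}(q)$ preceding \autoref{Definition:split} and by the sign-reversing involution $\psi$, all terms indexed by $T\in\LC_{W,\eta}$ with at least one vertical bar cancel in pairs: if $T_1$ can split we pair $T$ with $(\spl(T_1),T_2,\dots,T_r)$, and if $T_1$ cannot split but can join $T_2$ we pair it with $(\join(T_1,T_2),T_3,\dots,T_r)$, the well-definedness of this pairing being exactly the content of \autoref{splitlemma}. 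Hence $D_{W,\eta}(q)=\sum_{T\in U_{W,\eta}}\weight(T)$, which is the proposition stated just before \autoref{def:polyomino}.

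Next I would invoke \autoref{proposition:phi}: the map $\varphi\colon U_{W,\eta}\leftrightarrow\PP_{W,\eta}$ is a bijection satisfying $\weight(T)=q^{\area(\varphi(T))}\prod_{i}\wt(w_i)$, where the product runs over the letters of the word $w(T)$. Writing $w(\varphi^{-1}(P))=(w^1,\dots,w^r)$ for $P\in\PP_{W,\eta}$ and setting $\wt(P)=\prod_i\wt(w^i)$ (noting that $\prod_i\wt(w^i)=\prod_{\text{letters }w_j}\wt(w_j)$ since $\wt$ is multiplicative over concatenation), we transport the weight-sum over $U_{W,\eta}$ to a weight-sum over $\PP_{W,\eta}$, giving
\begin{displaymath}
	D_{W,\eta}(q)=\sum_{T\in U_{W,\eta}}\weight(T)=\sum_{P\in\PP_{W,\eta}}q^{\area(P)}\wt(P).
\end{displaymath}
This is the first displayed equality of the corollary.

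For the second displayed assertion, I would simply observe that the symmetric function $F(\X)$ in the statement is, by comparing with the definition of $D_{W,\eta}(q)$, nothing but $\sum_{\eta\vdash n}e_\eta(\X)\,D_{W,\eta}(q)$: the inner triple sum over $\beta\vDash n$, $\vec{w}\in\WV(W,\beta)$, and $\vec{\nu}\in\PR(\eta,\beta)$ of $\rho(\vec{w})(-1)^{n-\ell(\beta)}(1-q^{\beta_1})f_{\vec{\nu}}[1/(1-q)]$ is precisely the first displayed formula for $D_{W,\eta}(q)$. Substituting the combinatorial expression just obtained yields $F(\X)=\sum_{\eta\vdash n}e_\eta(\X)\sum_{P\in\PP_{W,\eta}}q^{\area(P)}\wt(P)$, as claimed. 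There is essentially no obstacle here: the entire content has been front-loaded into \autoref{proposition:phi} and the involution argument, and the corollary is a bookkeeping step. The one point requiring a line of care is the identification $\prod_i\wt(w^i)=\prod_{\text{letters}}\wt(w_j)$ and the matching of index sets between the definition of $F$ and the definition of $D_{W,\eta}$, both of which are routine.
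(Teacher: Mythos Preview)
Your proposal is correct and matches the paper's approach exactly: the corollary is stated without proof in the paper precisely because it is the immediate combination of the fixed-point identity $D_{W,\eta}(q)=\sum_{T\in U_{W,\eta}}\weight(T)$ with the weight-preserving bijection $\varphi$ of \autoref{proposition:phi}, followed by the tautological identification of the inner sum in $F(\X)$ with $D_{W,\eta}(q)$. Your write-up makes these steps explicit and is entirely in line with what the paper intends.
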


\section{Multi-labeled Dyck paths}
\label{section:multi-labeledDyckpaths}

In order to give a combinatorial interpretation of $\nabla_\ast^k e_n$, we must first identify the proper sets and statistics to which we can apply the results from the previous section.

Let $X = \mathbb{N}_+^{k+1}$, $W = \bigcup_{n \in \mathbb{N}} X^n$, and for convenience, we index elements in $X$ from $0$ so that for $w = (w_1,\dots, w_n) \in W$, we have $w_i = (w_{i0}, \dots, w_{ik}) \in \mathbb{N}^{k+1}$.
Let $\lwt(w,i) = \# \{ 0 \leq j \leq k \mid  w_{ij} < w_{i+1,j}\}$, that is, the number of ascents occurring at the same position between the $i^{\ith}$ and the $(i+1)^{\st}$ letter of $w$, and let \[ \wt(w) = \XX^w =  \prod_{i=1}^n \prod_{j=0}^k x_{j,w_{ij}}. \]
Now, recall that
\begin{align*}
	\widehat{\nabla}_\ast^k e_n(\X)
		& = \sum_{\mu \vdash n} \sum_{\beta \in R(\mu)} (-1)^{n-\ell(\mu)} \frac{1-q^{\beta_1}}{ \widehat{h}_\beta(1)}
		\sum_{w \in (\mathbb{N}_+^{k+1})^n}  q^{\revmaj_\beta(w)} \XX^w  \\
		& = D_{W,1^n}(q) = \sum_{P \in \PP_{W,1^n}} q^{\area(P)} \wt(P).
\end{align*}
This gives one way of getting a combinatorial interpretation, but we can also translate this result in terms of what we will call multi-labeled $k^n$-Dyck paths. The explicit bijection to multi-labeled Dyck paths will be given in \Cref{def:iota}, below. But first, we will state the main results.

\begin{definition}
	For positive integers $r$, $k$ and $n$, a  \emph{multilabeled Dyck path} is a pair $(\pi, w) \in \LD_{k^n}^r$, where $\pi \in D_{n}^{k}$ is a $( k n \times n)$ Dyck path (i.e.\ a lattice path that always lie weakly above the diagonal $ky = x$, called the main diagonal), and $w = (w_1, \dots, w_n)$ is a word of $r$-tuples $w_i = (w_{i0}, \dots, w_{i,r-1}) \in \mathbb{N}_+^{r}$ such that, if $w$ has $s$ weak descents at position $i$, then the $i^{\ith}$  north step is followed by at least $s$ east steps. In other words, if the $i^{\ith}$  north step of $\pi$ has $x$-coordinate $\col_\pi(i)$ then \[ \# \{ j \mid w_{i,j} \geq w_{i+1,j} \} \leq \col_\pi(i+1) - \col_\pi(i) \] for all $1 \leq i < n$. The \emph{area} of a multi-labeled Dyck path is defined as is done for rectangular Dyck paths, so that $\area(\pi)$ is the number of whole lattice cells between $\pi$ and the main diagonal (note that the area does not depend on $w$).
	
	When $r = k+1$, we denote the set $\LD_{k^n}^{k+1}$ by $\LD_{k^n}$.
\end{definition}

The notion of $\LD_{k^n}$ is illustrated in \Cref{figure:LD33_paths}. The whole $m$-expansion of $\widehat{\nabla}_{\vertfonce{\Z}} \widehat{\nabla}_{\rouge{\Y}}(e_3(\bleu{\X}))$ may be found in \Cref{Table:nabla_Y_Z}.
\begin{figure}
	\begin{center}
		\includegraphics[scale=0.7]{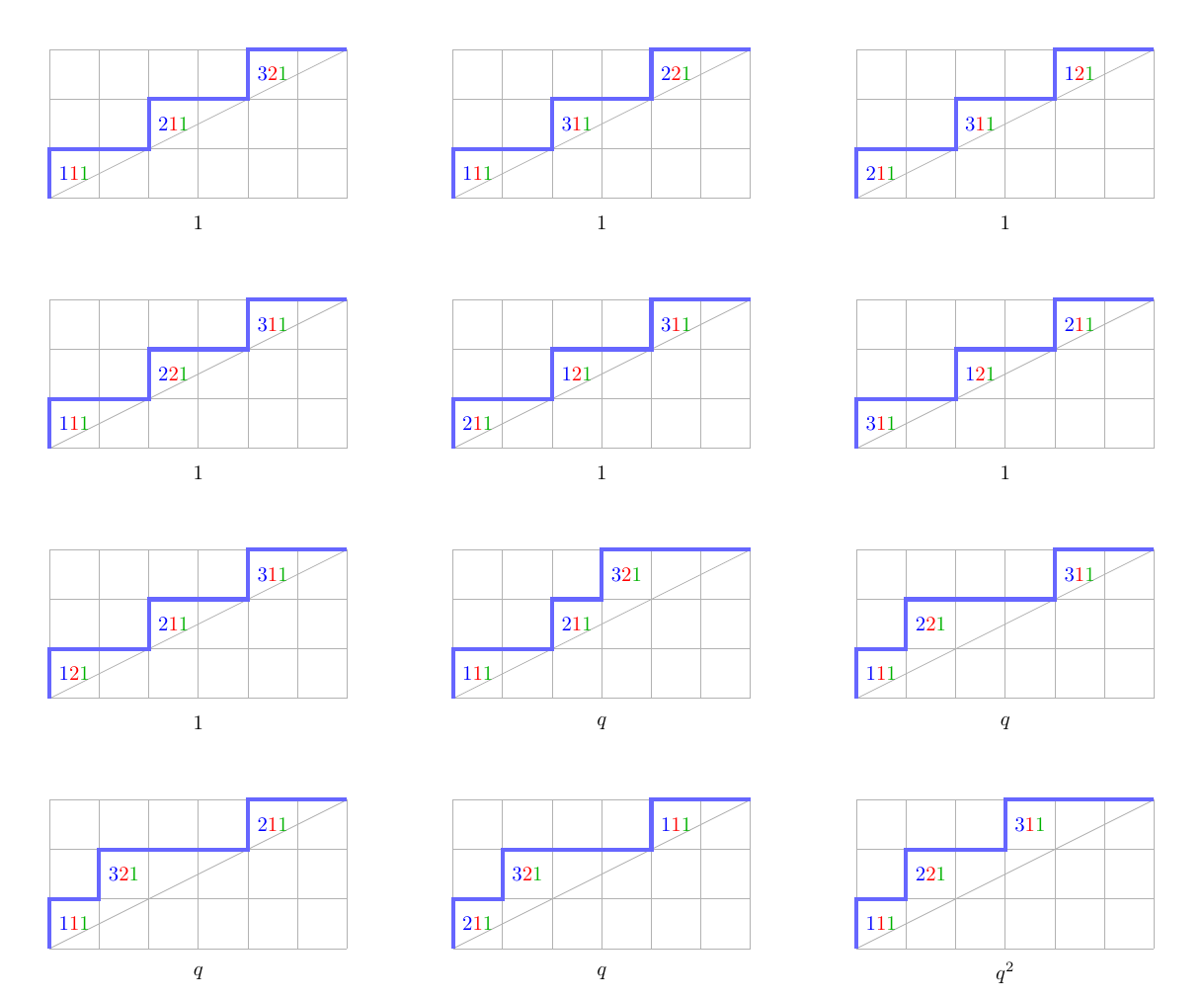}
	\end{center}
	\caption{The set of multilabeled Dyck paths in $\LD_{2^3}$ with $\bleu{\X}$-labels $(\bleu{1,2,3})$, $\rouge{\Y}$-labels $(\rouge{1,1,2})$, $\vertfonce{\Z}$-labels $(\vertfonce{1,1,1})$, giving the coefficient $q^2+4q+7$ of $m_{111}(\bleu{\X}) \otimes m_{21}(\rouge{\Y}) \otimes m_3(\vertfonce{\Z})$ in $\widehat{\nabla}_{\vertfonce{\Z}} \widehat{\nabla}_{\rouge{\Y}} (e_3(\bleu{\X}))$.}
	\label{figure:LD33_paths}
\end{figure}

\begin{tiny}
	\begin{table}
		\renewcommand{\arraystretch}{1.8}
		\begin{tabular}{|c||l|l|l|}
			\hline
			Coeff of $m_{111}(\bleu{\X})$ & $m_{111}(\vertfonce{\Z})$
			                              & $m_{21}(\vertfonce{\Z})$
			                              & $m_{3}(\vertfonce{\Z})$                                              \\
			\hline\hline
			$m_{111}(\rouge{\Y})$         & $\begin{aligned}     & q^{6} + 7 q^{5} + 26 q^{4} + 67 q^{3} \\[-2pt]
                    & \qquad + 134 q^{2} + 196 q + 163
				                                 \end{aligned}$
			                              & $q^{5} + 6 q^{4} + 20 q^{3} + 47 q^{2} + 78 q + 73$
			                              & $q^{3} + 5 q^{2} + 14 q + 19$                                        \\ \hline
			$m_{21}(\rouge{\Y})$           & $q^{5} + 6 q^{4} + 20 q^{3} + 47 q^{2} + 78 q + 73$
			                              & $q^{4} + 5 q^{3} + 15 q^{2} + 29 q + 31$
			                              & $q^{2} + 4 q + 7$                                                    \\ \hline
			$m_{3}(\rouge{\Y})$           & $q^{3} + 5 q^{2} + 14 q + 19$
			                              & $q^{2} + 4 q + 7$
			                              & $1$                                                                  \\ \hline
		\end{tabular}
		\bigskip

		\begin{tabular}{|c||l|l|l|}
			\hline
			Coeff of  $m_{21}(\bleu{\X}))$ & $m_{111}(\vertfonce{\Z})$
			                               & $m_{21}(\vertfonce{\Z})$
			                               & $m_{3}(\vertfonce{\Z})$                             \\
			\hline\hline
			$m_{111}(\rouge{\Y})$          & $q^{5} + 6 q^{4} + 20 q^{3} + 47 q^{2} + 78 q + 73$
			                               & $q^{4} + 5 q^{3} + 15 q^{2} + 29 q + 31$
			                               & $q^{2} + 4 q + 7$                                   \\ \hline
			$m_{21}(\rouge{\Y})$           & $q^{4} + 5 q^{3} + 15 q^{2} + 29 q + 31$
			                               & $q^{3} + 4 q^{2} + 10 q + 12$
			                               & $q + 2$                                             \\ \hline
			$m_{3}(\rouge{\Y})$            & $q^{2} + 4 q + 7$
			                               & $q + 2$
			                               & $0$                                                 \\ \hline
		\end{tabular}
		\bigskip

		\begin{tabular}{|c||l|l|l|}
			\hline
			Coeff of $m_{3}(\bleu{\X}))$ & $m_{111}(\vertfonce{\Z})$
			                             & $m_{21}(\vertfonce{\Z})$
			                             & $m_{3}(\vertfonce{\Z})$       \\
			\hline\hline
			$m_{111}(\rouge{\Y})$        & $q^{3} + 5 q^{2} + 14 q + 19$
			                             & $q^{2} + 4 q + 7$
			                             & $1$                           \\ \hline
			$m_{21}(\rouge{\Y})$         & $q^{2} + 4 q + 7$
			                             & $q + 2$
			                             & $0$                           \\ \hline
			$m_{3}(\rouge{\Y})$          & $1$
			                             & $0$
			                             & $0$                           \\ \hline
		\end{tabular}
		\caption{The $m$-expansion of $\widehat{\nabla}_{\vertfonce{\Z}} \widehat{\nabla}_{\rouge{\Y}}(e_3(\bleu{\X}))$.}
		\label{Table:nabla_Y_Z}
	\end{table}
\end{tiny}

\begin{theorem}
	\label{thm:monomial-expansion}
	For any $n$ and $k$,
	\begin{equation}
		\widehat{\nabla}_\ast^k e_n = \sum_{ (\pi,w) \in \LD_{k^n}} q^{\area(\pi)} \XX^w.
	\end{equation}
\end{theorem}

We want to construct an element in $\LD_{k^n}$ starting from a path $\pi$ and word $\overline{w} = (\overline{w}_1,\dots, \overline{w}_n)$ with $\overline{w}_i = (w_{i1} , \dots, w_{ik}) \in \mathbb{N}_+^k$ (corresponding to the last $k$ tensors). This means that $(\pi , \overline{w}) \in \LD_{k^n}^k$. We will now create $w = (w_1,\dots, w_n)$ with ${w}_{i} = ( w_{i0}, w_{i1} , \dots, w_{ik})$ so that the pair $L =(\pi,w)$ is a multi-labeled Dyck path. We will then also write $w(L) = w$ and $\overline{w}(L) = \overline{w}$.

Suppose there are $r$ weak descents in $\overline{w}$ at position $i$. Then $\pi$ must have at least $r$ east steps succeeding its $i^{\ith}$  north step. If there are precisely $r$ east steps directly after the $i^{\ith}$  north step, then we require $w_{i0} > w_{i+1,0}$.  If there are more than $r$ east steps instead, then we are free to choose any value for $w_{i+1,0}$. This gives an expansion as linear combination of products of elementary symmetric functions (see \Cref{equation:elementary_expansion} below).

\begin{definition}
	\label{def:e-composition}
	For a multilabeled Dyck path $L = (\pi, w)$,  we denote by $\eta(p)$ the unique composition of $n$ whose corresponding partial sums set is such that
	\[
		i \in \Sigma(\eta(L)) \qquad \text{ if and only if} \qquad  \# \{ j \mid w_{i,j} \geq w_{i+1,j} \} < \col_\pi(i+1) - \col_\pi(i).
	\]
\end{definition}

\begin{corollary}
	For any $n$ and $k$, we have
	\begin{equation}\label{equation:elementary_expansion}
		\widehat{\nabla}_\ast^k e_n = \sum_{L \in \LD_{k^n}^k} q^{\area(L)} e_{\eta(L)} \otimes \XX^{w(L)}.
	\end{equation}
\end{corollary}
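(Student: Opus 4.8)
The plan is to obtain \autoref{equation:elementary_expansion} by regrouping the monomial expansion of the preceding proposition,
\begin{equation*}
	\widehat{\nabla}_{\YY}(e_n(\X))= \sum_{ (\pi,w) \in \LD_{k^n}} q^{\area(\pi)}\, \XX^w ,
\end{equation*}
and summing out the $\X$-part of the labels. First I would introduce the forgetful map $\Phi\colon\LD_{k^n}\to\LD_{k^n}^k$ sending $(\pi,w)$ to $(\pi,\overline w)$, where $\overline w$ is obtained from $w=(w_1,\dots,w_n)$ by deleting the digit $w_{i0}$ (the one that feeds the $x$-variables) from each letter. This $\Phi$ is well defined: deleting a digit can only decrease, at every position $i$, the quantity $\#\{j\mid w_{ij}\ge w_{i+1,j}\}$, so the multi-labeled Dyck-path inequality $\#\{j\mid w_{ij}\ge w_{i+1,j}\}\le\col_\pi(i+1)-\col_\pi(i)$ is preserved. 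It is also surjective, since any $(\pi,\overline w)\in\LD_{k^n}^k$ can be promoted to an element of $\LD_{k^n}$ by adjoining a strictly monotone $\X$-column that creates no new weak descents — this is precisely the construction described just before \magenta{Definition}~\ref{def:e-composition}.

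The sum then breaks up over the fibres of $\Phi$. For a fixed $p=(\pi,\overline w)\in\LD_{k^n}^k$, neither $\area(\pi)$ nor the $\Y$-part $\YY^{w(p)}=\prod_{j=1}^k\prod_{i=1}^n y_{j\,w_{ij}}$ of $\XX^w$ depends on the deleted digits, so
\begin{equation*}
	\sum_{(\pi,w)\in\Phi^{-1}(p)} q^{\area(\pi)}\,\XX^w \;=\; q^{\area(p)}\,\YY^{w(p)}\sum_{w_{\star0}} \prod_{i=1}^n x_{w_{i0}} ,
\end{equation*}
the inner sum being over all $w_{\star0}=(w_{10},\dots,w_{n0})\in\mathbb{N}_+^n$ for which $(\pi,w)\in\LD_{k^n}$. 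The crux is to recognise this inner sum as $e_{\eta(p)}(\X)$. Write $r_i=\#\{1\le j\le k\mid w_{ij}\ge w_{i+1,j}\}$ and $a_i=\col_\pi(i+1)-\col_\pi(i)$; since $p\in\LD_{k^n}^k$ we have $r_i\le a_i$, and by \magenta{Definition}~\ref{def:e-composition} the position $i$ lies in $\Sigma(\eta(p))$ exactly when $r_i<a_i$. Adjoining the digit $w_{i0}$ raises the weak-descent count at $i$ by the indicator of the relevant relation between $w_{i0}$ and $w_{i+1,0}$, so the constraint $(\pi,w)\in\LD_{k^n}$ forces that relation to be a strict inequality (in the direction allowed by the conventions) exactly at the positions with $r_i=a_i$, and imposes nothing at the positions with $r_i<a_i$. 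Hence the admissible words $w_{\star0}$ are precisely those that are strictly monotone along each maximal block determined by $\eta(p)$ and completely free between blocks; since $e_m(\X)=\sum_{i_1<\cdots<i_m} x_{i_1}\cdots x_{i_m}$ (equivalently with the reversed inequalities), the generating function of such words factors over the blocks and equals $\prod_{\text{parts }m\text{ of }\eta(p)} e_m(\X)=e_{\eta(p)}(\X)$. Summing over all $p\in\LD_{k^n}^k$ yields the claimed identity.

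I expect the last identification to be the only real obstacle: one has to line up three different bookkeeping devices — the east-step gaps $a_i=\col_\pi(i+1)-\col_\pi(i)$, the weak-descent counts before and after the extra digit is adjoined, and the defining condition of $\eta(p)$ — and verify that ``$i$ is internal to a block of $\eta(p)$'' is equivalent to ``$w_{i0}$ is forced to be strictly comparable to $w_{i+1,0}$''. It is also worth checking the degenerate cases explicitly: a block of size $1$ imposes no internal constraint and contributes $e_1(\X)=\sum_i x_i$, and the first block behaves the same way, so the block factorisation is exact. Everything else — well-definedness and surjectivity of $\Phi$, and constancy of $q^{\area(\pi)}$ and of $\YY^{w(p)}$ on fibres — is routine.
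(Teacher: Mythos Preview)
Your argument is correct and is essentially the same as the paper's: the paragraph immediately preceding the corollary describes exactly your fibre-by-fibre analysis, observing that once $(\pi,\overline w)\in\LD_{k^n}^k$ is fixed, the admissible $0^{\text{th}}$ digits are constrained to strictly increase precisely within each block of $\eta(p)$ and are free between blocks, which factors the $\X$-sum as $e_{\eta(p)}(\X)$. The paper does not give a separate formal proof beyond that paragraph, so your write-up is in fact more detailed than what appears there.
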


The \emph{content} of a multi-labeled $k^n$-Dyck path $(\pi,w)$ is the multiplicity type $m(w)= (m(w_{\star 0}) ,\dots, m(w_{\star k}))$. We will then write $w \in W_\lambda$ if $m(w_{\star 0}) = 1^{\lambda_1} \cdots n^{\lambda_n}$. Denote by $\LD_{k^n}(\lambda)$ the set of $(\pi,w) \in \LD_{k^n}$ with $w \in W_\lambda$.

\begin{corollary}
	For any $\lambda \vdash n $ and $k$, we have
	\[\left. \nabla_\ast^k \Xi e_\lambda \right\rvert_{t=1} = \sum_{L \in \LD_{k^n}(\lambda)} q^{\area(L)} e_{\eta(L)} \otimes \XX^{\overline{w}(L)}. \]
\end{corollary}

This equality follows from the bijection in \Cref{def:iota}.

\subsection{From polyominoes to Dyck paths}
For $\lambda \vdash n$, let $W_\lambda \subseteq (\mathbb{N}^{k+1})^n$ be the set of words $w = w_1 \cdots w_n$ with $m(w_{\star0}) =  1^{\lambda_1} \cdots n^{\lambda_n}$ and $w_{i,j} > 0$ for $j > 0$.

In this section we complete the construction by giving a weight-preserving bijection between $\PP_{W_\lambda,\eta}$ and the set \[ \LD_{k^n}(\lambda,\eta)  \coloneqq \{ L \in \LD_{k^n}(\lambda) \mid \eta(L) = \eta \} \] of multi-labeled $k^n$-Dyck paths with content $\lambda$ and $e$-composition $\eta$. It is slightly easier to describe the inverse, so that is what we will do.

\begin{definition}
	\label{def:iota}
	Let $L = (\pi, w)$ be a multi-labeled $k^n$-Dyck path, let $P = \pi E$ and $Q = E (E^k N)^n$. We define $\iota(L) = (\overline{P}, \overline{Q}, w)$, where $\overline{P}$ and $\overline{Q}$ are the paths obtained from $P$ and $Q$ respectively by removing, for every $0 \leq i < n$, $\# \{ j \mid w_{ij} \geq w_{i+1,j} \}$ east steps of $P$ on the line $y=i$ and the same amount of east steps of $Q$ on the line $y=i-1$.
\end{definition}

\begin{proposition}
	The map $\iota$ defines a weight-preserving bijection between $\LD_{k^n}(\lambda, \eta)$ and $\PP_{W_\lambda,\eta}$, meaning $\area(L) = \area(\iota(L))$.
\end{proposition}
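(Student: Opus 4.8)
The plan is to check that $\iota$ of Definition~\ref{def:iota} lands in $\PP_{W_\lambda,\eta}$, to describe its inverse explicitly, and then to verify the area identity. The picture I have in mind is that $\iota$ is a \emph{deflation} of a parallelogram polyomino. For $p=(\pi,w)\in\LD_{k^n}(\lambda,\eta)$, the pair $(P,Q)=(\pi E,\ E(E^kN)^n)$ is already a parallelogram polyomino in the $(kn+1)\times n$ rectangle: the two paths share both endpoints, and $\pi E$ lies strictly above $E(E^kN)^n$ in between, because $\pi$ stays weakly above the line $ky=x$ while $E(E^kN)^n$ is the lattice path hugging that line from just below, translated one unit east. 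The map $\iota$ then deletes, for each $i$, equally many east steps from $P$ on the row $y=i$ and from $Q$ on the row $y=i-1$, the common number being dictated by the weak-descent count $\#\{j:w_{ij}\ge w_{i+1,j}\}$ of $w$ at position $i$ (together with the boundary conventions on the first and last letters). Deleting equal amounts keeps the endpoints matched; deleting on $P$ one row above where one deletes on $Q$ is exactly what preserves the strict nesting of the two paths, which I would verify by an induction on the height using the defining inequality $\#\{j:w_{ij}\ge w_{i+1,j}\}\le\col_\pi(i+1)-\col_\pi(i)$ of a multi-labeled Dyck path together with the characterization of $\Sigma(\eta(p))$ in Definition~\ref{def:e-composition}.

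Granting that, $\iota(p)\in\PP_{W_\lambda,\eta}$ would follow from three checks. First, $\rho$-compatibility of $\overline{Q}$: the east-step counts of $E(E^kN)^n$ are constant along each row ($k$, or $k+1$ on the bottom row), and subtracting the prescribed deletion amounts leaves precisely $\lwt(w,i)+\delta_{i,1}$ east steps on the row $y=i-1$ -- this is the bookkeeping that pins down the exact deletion numbers. Second, the type: since $\iota$ only deletes east steps, the maximal north runs of $\overline{P}$ are those of $P=\pi E$, and an east step of $\pi$ on the row $y=i$ survives in $\overline{P}$ exactly when there is a ``spare'' one, i.e.\ exactly when $i\in\Sigma(\eta(p))$; hence the lengths of the maximal north runs of $\overline{P}$ form the composition $\eta(p)=\eta$, so $\overline{P}$ has type $\eta$. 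Third, the attached word is $w$ itself, which lies in $W_\lambda$ by hypothesis; and a one-line length count shows $\overline{P}$ and $\overline{Q}$ still end at the same point, so $\iota(p)$ is a bona fide $W_\lambda$-labeled parallelogram polyomino of type $\eta$.

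For the inverse I would undo the deflation. Given $(\overline{P},\overline{Q},w)\in\PP_{W_\lambda,\eta}$, the label $w$ again prescribes the deletion amounts, and $\rho$-compatibility tells us exactly how many east steps $\overline{Q}$ is short of $E(E^kN)^n$ on each row; re-inserting them recovers $Q$, and inserting the matching amounts at the right end of each east run of $\overline{P}$ produces a path $P$; then $\pi$ is $P$ with its terminal east step stripped. One checks $(\pi,w)\in\LD_{k^n}(\lambda,\eta)$: the enclosing rectangle is $kn\times n$ by the length count; $\pi$ stays weakly above $ky=x$ by running the nesting induction backwards; the multi-labeled path inequality holds because the inserted east steps were placed on a row of $\overline{P}$ already carrying a nonnegative number of them; and the $e$-composition of $(\pi,w)$ equals $\eta$ because the rows of $\overline{P}$ carrying a surviving east step are exactly those contributing a spare step. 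That $\iota$ and this construction are mutually inverse is then immediate from the definitions.

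Finally, $\area(p)=\area(\iota(p))$. Both are cell counts: $\area(\pi)$ is the number of whole lattice cells above $ky=x$ and below $\pi$, equivalently the number of cells of the polyomino $(\pi E,\ E(E^kN)^n)$ that do not touch the bottom path, while by Definition~\ref{def:polyomino} $\area(\overline{P},\overline{Q})$ is the number of cells between $\overline{P}$ and $\overline{Q}$ not touching $\overline{Q}$. The subtlety -- which I expect to be the main obstacle -- is that the two deletions live on \emph{different} rows ($y=i$ versus $y=i-1$), so one must argue that ``cells not touching the bottom path'' are preserved. I would do this by accounting for each deleted east step individually: deleting a step on row $y=i-1$ of $Q$ lowers the cells of the strip above it by one row, and the paired deletion on row $y=i$ of $P$ removes exactly the cells that this lowering would otherwise have brought into contact with $\overline{Q}$; telescoping these local contributions (the same bookkeeping as in the computation of $\area(P,Q)$ in the proof of Proposition~\ref{proposition:phi}) yields $\area(\overline{P},\overline{Q})=\area(\pi)$. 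With the area identity in hand, $\iota$ is the claimed weight-preserving bijection.
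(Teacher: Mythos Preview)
Your proposal is correct and follows essentially the same route as the paper: verify that $\iota$ lands in $\PP_{W_\lambda,\eta}$ (enough east-steps to delete, nesting preserved, $\rho$-compatibility, type $\eta$ via the $e$-composition), observe that the process is invertible, and check area preservation.

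The one place where you make life harder than necessary is the area argument. You phrase it as tracking ``cells not touching $\overline{Q}$'' through a telescoping of local deletions, and you flag this as the main obstacle. The paper's argument is much more direct: the number of cells in the $i^{\text{th}}$ row of the polyomino drops by exactly $d_i = \#\{j : w_{ij}\ge w_{i+1,j}\}$ (since $Q_i$ shifts left by $d_i$ while $P_i$ does not), and the total width $m$ drops by the same amount $\sum_i d_i$. Since $\area = (\text{cells}) - (m+n-1)$, the area is unchanged. No telescoping, no reference to the bookkeeping of Proposition~\ref{proposition:phi}, is needed. Your sentence ``deleting a step on row $y=i-1$ of $Q$ lowers the cells of the strip above it by one row'' is also mis-worded --- the deletion shifts horizontally, not vertically --- so it would be worth tightening that description even if you keep your approach.
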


\begin{proof}[\bf Proof]
	First, notice that, since $\col_\pi(i+1) - \col_\pi(i) \geq \# \{ j \mid w_{ij} \geq w_{i+1,j} \}$, there must be at least $\# \{ j \mid w_{ij} \geq w_{i+1,j} \}$ east steps of $P$ on the line $y=i$. Notice that by definition we have at least $k$ east steps of $Q$ on each line. Since the first east step of $P$ on the line $y = i$ must necessarily be strictly left of the first east step of $Q$ along the line $y=i-1$, then $(\overline{P}, \overline{Q})$ is necessarily a parallelogram polyomino.

	Now, the number of east steps of $\overline{Q}$ on the line $y=i-1$ is exactly
	\[
		k - \# \{ j \mid w_{i,j} \geq w_{i+1,j} \} + \delta_{i,1} = \# \{ j \mid w_{i,j} < w_{i+1,j} \} + \delta_{i,1} = \lwt(w,i) + \delta_{i,1}.
	\]
	Thus $\iota(L)$ is $\rho$-compatible. By construction, the word of $\iota(L)$ is the same as the word of $L$, and so their contents are the same.
	By \Cref{def:e-composition}, the $e$-composition of $L$ is exactly given by the lenghts of the maximal vertical segments of $\overline{P}$. It follows that $\iota(L) \in \PP_{W_\lambda,\eta}$.

	Since the process that defines $\iota$ is invertible, the map is bijective. Finally, notice that the number of cells in the $i^{\ith}$ row decreases by $\# \{ j \mid w_{ij} \geq w_{i+1,j} \}$, but so does the number of east steps of the bottom path on the line $y=i-1$. This means that the area of $\iota(L)$ is the same as the area of $L$.
\end{proof}

\subsection{Schur expansions}
The Schur expansion of the modified Macdonald polynomials at $t=1$ is given by
\[ \frac{ \widehat{h}_\beta(\X)}{ \widehat{h}_\beta(1)} = \sum_{ \lambda \vdash n} s_\lambda(\X) \sum_{\alpha \in \LW_\lambda} q^{\revmaj_\beta}(\alpha), \]
where $\LW_\lambda$ is the set of lattice words of type $\lambda$. Given a word $w=(w_1,\dots,w_n)$ of tuples  of length $k$, we will say that $w$ is lattice of type $(\lambda^1,\dots, \lambda^{k})$ if $w_{\star i}$ is a lattice word of type $\lambda^i(w) = \lambda^i$.
We note that
\[ \widehat{\nabla}_\ast^k e_n = \sum_{\eta \vdash n} e_\eta \otimes s_{\lambda^1} \otimes \cdots s_{\lambda^k} D_{\LW_{\lambda^1,\dots,\lambda^k},\eta}(q), \]
where $\LW_{\lambda^1,\dots,\lambda^k}$ is the set of lattice words of type $(\lambda^1,\dots, \lambda^k)$, and the local weights we are using are the same as in the beginning of this section, given by the number of local ascents. Let $\LLD_{k^n}$ be the subset of $L \in \LD_{k^n}^k$ for which $w(L)$ is lattice. It follows immediately from the work developed in the previous sections that we have the following.

\begin{theorem}
	\label{thm:schur-expansion}
	\begin{equation}
		\widehat{\nabla}_\ast^k e_n  = \sum_{L \in \LLD_{k^n}} q^{\area(L)} e_\eta \otimes s_{\lambda^1(w(L))} \otimes \cdots \otimes s_{\lambda^k (w(L))}.
	\end{equation}
\end{theorem}

\subsection{Notable special cases}

\subsubsection{\texorpdfstring{$\lambda = (n), k = 1$}{lambda = (n), k = 1}}

When $k=1$, we get back regular Dyck paths, but with two distinct sets of labels.
By taking scalar products appropriately, we can get back some well-known combinatorial expressions, such as the shuffle theorem or the valley version of the Delta conjecture.

Indeed, by \Cref{prop:scalar_product}, we have $\langle e_n, \nabla_\ast \rangle e_n = \nabla e_n$. This corresponds to the case in which the $\Y$-labels appear in strictly increasing order in the reading word. Since we are working with the area only, which does not depend on the labels, we can use either the diagonal reading word (in accordance with the $(\dinv, \area)$ version of the shuffle theorem) or the vertical reading word (in accordance with the equivalent $(\area, \pmaj)$ version \cite{Loehr-Remmel-2004}).

In either case, the fact that the $\Y$-labels must be strictly increasing in the reading order implies that, whenever two north steps are separated by one single east step, their label strictly increase, and since $r=2$, there is no restriction on the $\X$-labels other than the fact that they must increase along columns, which is exactly the condition appearing in the shuffle theorem. 

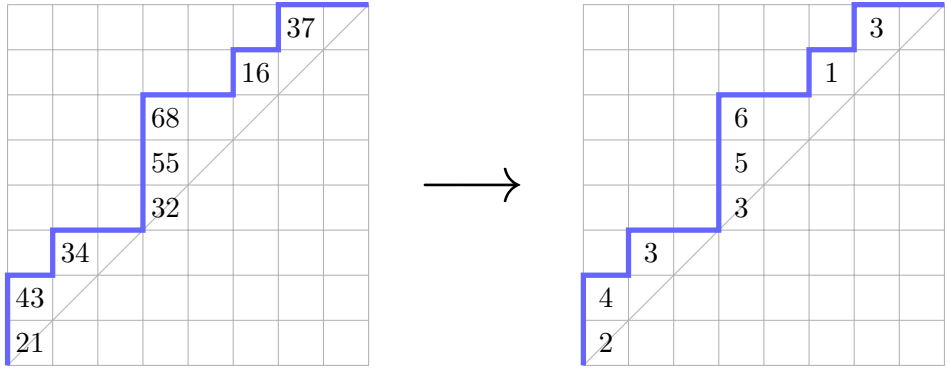
\begin{figure}[!ht]
	\centering
	\begin{tikzpicture}[scale=.6]
		\draw[step=1.0, gray!60, thin] (0,0) grid (8,8);

		\begin{scope}
			\clip (0,0) rectangle (8,8);
			\draw[gray!60, thin] (0,0) -- (8,8);
		\end{scope}

		\draw[blue!60, line width=2pt] (0,0) -- (0,1) -- (0,2) -- (1,2) -- (1,3) -- (2,3) -- (3,3) -- (3,4) -- (3,5) -- (3,6) -- (4,6) -- (5,6) -- (5,7) -- (6,7) -- (6,8) -- (7,8) -- (8,8);

		\node at (0.5,0.5) {$21$};
		\node at (0.5,1.5) {$43$};
		\node at (1.5,2.5) {$34$};
		\node at (3.5,3.5) {$32$};
		\node at (3.5,4.5) {$55$};
		\node at (3.5,5.5) {$68$};
		\node at (5.5,6.5) {$16$};
		\node at (6.5,7.5) {$37$};
	\end{tikzpicture}%
	\begin{tikzpicture}[scale=.6]
		\node at (-2,0) {};
		\node at (0,3.75) {\Huge $\longrightarrow$};
		\node at (2,0) {};
	\end{tikzpicture}
	\begin{tikzpicture}[scale=.6]
		\draw[step=1.0, gray!60, thin] (0,0) grid (8,8);

		\begin{scope}
			\clip (0,0) rectangle (8,8);
			\draw[gray!60, thin] (0,0) -- (8,8);
		\end{scope}

		\draw[blue!60, line width=2pt] (0,0) -- (0,1) -- (0,2) -- (1,2) -- (1,3) -- (2,3) -- (3,3) -- (3,4) -- (3,5) -- (3,6) -- (4,6) -- (5,6) -- (5,7) -- (6,7) -- (6,8) -- (7,8) -- (8,8);

		\node at (0.5,0.5) {$2$};
		\node at (0.5,1.5) {$4$};
		\node at (1.5,2.5) {$3$};
		\node at (3.5,3.5) {$3$};
		\node at (3.5,4.5) {$5$};
		\node at (3.5,5.5) {$6$};
		\node at (5.5,6.5) {$1$};
		\node at (6.5,7.5) {$3$};
	\end{tikzpicture}
	\caption{A two-labeled Dyck path with $\Y$-(diagonal) reading word $12345678$, and the corresponding labeled Dyck path.}
	\label{fig:catalan}
\end{figure}

If we take instead $\langle e_{n-k} h_k, \nabla_\ast \rangle e_n = \Delta_{e_{n-k}} e_n$, this corresponds to the case in which the reading word of the $\Y$-labels is a shuffle of $(1,2,\dots,n-k)$ and $(n, n-1, \dots, n-k+1)$. As usual in this case, we can interpret this as decorating $k$ peaks of the path, with the condition that whenever there is a decorated peak and a  north step in the column immediately to its right, since the $\Y$-labels decrease, the $\X$-labels must increase. This condition is exactly the same as saying that the valley following a decorated peak must be contractible, which is the condition given by the valley version of the Delta conjecture. Of course the topmost peak does not correspond to any valley, and in fact we get $\Delta_{e_{n-k}} e_n$ rather than $\Delta'_{e_{n-k-1}} e_n$.

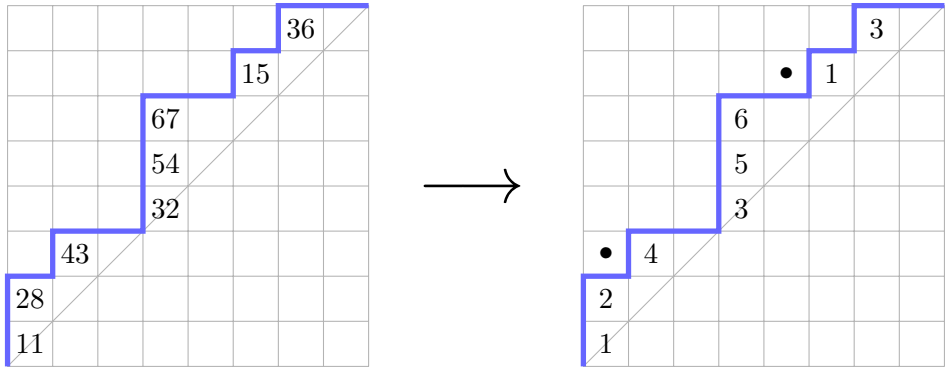
\begin{figure}[!ht]
	\centering
	\begin{tikzpicture}[scale=.6]
		\draw[step=1.0, gray!60, thin] (0,0) grid (8,8);

		\begin{scope}
			\clip (0,0) rectangle (8,8);
			\draw[gray!60, thin] (0,0) -- (8,8);
		\end{scope}

		\draw[blue!60, line width=2pt] (0,0) -- (0,1) -- (0,2) -- (1,2) -- (1,3) -- (2,3) -- (3,3) -- (3,4) -- (3,5) -- (3,6) -- (4,6) -- (5,6) -- (5,7) -- (6,7) -- (6,8) -- (7,8) -- (8,8);

		\node at (0.5,0.5) {$11$};
		\node at (0.5,1.5) {$28$};
		\node at (1.5,2.5) {$43$};
		\node at (3.5,3.5) {$32$};
		\node at (3.5,4.5) {$54$};
		\node at (3.5,5.5) {$67$};
		\node at (5.5,6.5) {$15$};
		\node at (6.5,7.5) {$36$};
	\end{tikzpicture}%
	\begin{tikzpicture}[scale=.6]
		\node at (-2,0) {};
		\node at (0,3.75) {\Huge $\longrightarrow$};
		\node at (2,0) {};
	\end{tikzpicture}
	\begin{tikzpicture}[scale=.6]
		\draw[step=1.0, gray!60, thin] (0,0) grid (8,8);

		\begin{scope}
			\clip (0,0) rectangle (8,8);
			\draw[gray!60, thin] (0,0) -- (8,8);
		\end{scope}

		\draw[blue!60, line width=2pt] (0,0) -- (0,1) -- (0,2) -- (1,2) -- (1,3) -- (2,3) -- (3,3) -- (3,4) -- (3,5) -- (3,6) -- (4,6) -- (5,6) -- (5,7) -- (6,7) -- (6,8) -- (7,8) -- (8,8);

		\node at (0.5,0.5) {$1$};
		\node at (0.5,1.5) {$2$};
		\node at (1.5,2.5) {$4$};
		\node at (3.5,3.5) {$3$};
		\node at (3.5,4.5) {$5$};
		\node at (3.5,5.5) {$6$};
		\node at (5.5,6.5) {$1$};
		\node at (6.5,7.5) {$3$};
		\draw (0.5,2.5) node {$\bullet$};
		\draw (4.5,6.5) node {$\bullet$};
	\end{tikzpicture}
	\caption{A two-labeled Dyck path with $\Y$-(diagonal) reading word $12834567 \in 123456 \shuffle 87$ and the corresponding valley-decorated labeled Dyck path. The ``big cars'' are replaced by decorated valleys.}
	\label{fig:labelled-dyck-path}
\end{figure}

In both cases, the valley version of the Delta conjecture from \cite{Haglund-Remmel-Wilson-2018} gives us a $t$-statistic on our two-labeled Dyck paths with certain $\Y$-labelings. We hope to extend this statistic to the full set of multi-labeled Dyck paths and any labeling.


\subsection{Tiered Trees}
It is worth noticing that, thanks to the following identity, we have another interpretation for $\nabla_\ast e_n$ that extends a conjecture of \cite{DAdderio-Iraci-LeBorgne-Romero-VandenWyngaerd-2022}.

\begin{proposition}
	\label{Equation:Nabla_Xi}
	\[ \nabla_\ast e_n = \sum_\lambda m_\lambda \otimes (\Xi e_\lambda) \]
\end{proposition}

Combining this with \cite{DAdderio-Iraci-LeBorgne-Romero-VandenWyngaerd-2022}*{Conjecture~6.4}, we have an $(m \otimes m)$-expansion for $\left. \nabla_\ast e_n \right\rvert_{t=1}$ in terms of rooted tiered trees and tree inversions.

Recall that $\Xi e_{1^n} = \Delta_{e_1} M \Pi (e_{1^n}^\ast)$. The coefficient of $m_\lambda$ in $M \Pi (e_{1^n}^\ast)$ is the Kac polynomial of certain dandelion quivers with a dimension vector depending on $\lambda$; which is to say that it is the polynomial enumerating absolutely indecomposable representations of the quiver over $\mathbb{F}_q$ \cites{DAdderio-Iraci-LeBorgne-Romero-VandenWyngaerd-2022,Gunnells-Letellier-Rodriguez-Quivers-2018}. It would  be interesting to find a similar interpretation for any $\Xi e_\mu$, not just for $\mu = (1^n)$.

\section{On power sums}
\label{section:power}

Our goal in this section is to show the following.

\begin{proposition}
	For any nonempty partition $\gamma$,
	\begin{align*}
		\widehat{\nabla}_\ast \widehat{\Delta}_{m_\gamma} (\omega p_n)
			& = \sum_{L \in \RLD^\gamma_{k^n}} q^{\area(L)} e_{\eta(L)} \otimes \XX^L \\
			& = \sum_{L \in \RLLD^\gamma_{k^n}} q^{\area(L)} e_{\eta(L)} \otimes s_{\lambda^1(w(L))} \otimes \cdots \otimes s_{\lambda^k (w(L))},
	\end{align*}
	where the sums are respectively over multi-labeled $(\gamma,k^n)$-Dyck paths with a marked return, and \underline{lattice} multi-labled $(\gamma,k^n)$-Dyck paths with a marked return.
\end{proposition}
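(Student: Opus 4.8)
The plan is to mirror, step by step, the derivation of the $e_n$-case in Sections~\ref{section:symmetricfunctions}--\ref{section:multi-labeledDyckpaths}; the only genuinely new ingredient is the symmetric-function identity that plays the role that $e_n = \sum_{\mu} f_\mu[1-q]\,\widehat{h}_\mu$ played there. First I would establish the modified-Macdonald expansion
\[
	\omega p_n(\X) \;=\; \sum_{\mu \vdash n} \frac{(1-q^n)(1-t^n)\,\Pi_\mu(q,t)}{w_\mu(q,t)}\,\Ht_\mu(\X),
\]
which follows from the evaluation $\langle \Ht_\mu,\,p_n\rangle = \Pi_\mu(q,t)$ together with the $\langle\,,\rangle_*$-orthogonality $\langle \Ht_\mu,\Ht_\lambda\rangle_* = w_\mu\,\delta_{\mu\lambda}$ implicit in \autoref{Cauchy_qt}. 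Applying $\Delta_{m_\gamma}$ multiplies the $\Ht_\mu$-coefficient by $m_\gamma[B_\mu(q,t)]$, and specializing at $t=1$ — using the identity $\left.\bigl(\tfrac{M B_\mu \Pi_\mu}{w_\mu}\bigr)\right\rvert_{t=1} = \widehat{h}_\mu(1)\,f_\mu[1-q]$ recorded in the excerpt together with $\left.\tfrac{1-t^n}{1-t}\right\rvert_{t=1} = n$ — gives
\[
	\widehat{\Delta}_{m_\gamma}\bigl(\omega p_n(\X)\bigr)
	\;=\; \sum_{\mu\vdash n}\; n\,[n]_q\,\frac{m_\gamma[B_\mu(q,1)]}{B_\mu(q,1)}\;f_\mu[1-q]\;\widehat{h}_\mu(\X).
\]
Against the $e_n$-case the only change is the extra scalar $n\,[n]_q\,m_\gamma[B_\mu(q,1)]/B_\mu(q,1)$ multiplying $f_\mu[1-q]\,\widehat{h}_\mu$; the task is to reorganize it combinatorially so that the $\gamma$-dependence (note $B_\mu(q,1) = \sum_k [\mu_k]_q$, so $m_\gamma[B_\mu(q,1)]$ encodes a $\gamma$-weighted selection of cells of $\mu$) becomes the ``type $\gamma$'' condition on a $(\gamma,k^n)$-Dyck path, while the remaining $q$-factors become the $q$-area of a single \emph{marked return}, i.e.\ a distinguished contact of the path with the main diagonal.

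With this identity in hand, I would feed it into $\widehat{\nabla}_{\YY}$ and run the machinery of \autoref{section:weights} essentially verbatim: expand $\widehat{h}_\mu(\Y_1)\cdots\widehat{h}_\mu(\Y_k)$ by the $t=1$ Haglund--Haiman--Loehr formula (the $\revmaj_\beta$-statistics, which look right in the sense of Definition~\ref{looksright}) and absorb $e_\eta(\X)$ through the forgotten-function Cauchy expansion $\widehat{h}_\mu(\X) = \sum_\eta e_\eta(\X)\sum_{\vec{v}\in\PR(\eta,\mu)} f_{\vec{v}}[1/(1-q)]$. The $e_\eta(\X)$-coefficient then becomes a $D_{W,\eta}$-type alternating sum, identical to the $e_n$-case except that the factor $(1-q^{\beta_1})$ is accompanied by the ``marked'' factor extracted above. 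One then sets up the corresponding enlarged objects — sequences of $W$-labeled column-composition tableaux carrying one extra mark on a bar/return position, with the multiplicities governed by $\gamma$ — and checks that the split and join maps of Definition~\ref{Definition:split} leave the mark inert, so that the sign-reversing involution $\psi$ still applies and its fixed points form the marked analogue of $U_{W,\eta}$. The bijection $\varphi$ onto $\rho$-compatible parallelogram polyominoes (Corollary~\ref{cor:1}) and the bijection $\iota$ of Definition~\ref{def:iota} onto multi-labeled $k^n$-Dyck paths then carry the mark to a marked return and the $\gamma$-data to the ``type $\gamma$'' condition; this gives the first equality $\sum_{p\in\RLD^\gamma_{k^n}} q^{\area(p)}\,\YY^p\, e_{\eta(p)}(\X)$.

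For the Schur expansion (the second equality) I would, exactly as in the $e_n$-case, refine each factor $\widehat{h}_\beta(\Y_j)/\widehat{h}_\beta(1) = \sum_w q^{\revmaj_\beta(w)}\,\Y_j^w$ into its lattice-word form $\sum_\lambda s_\lambda(\Y_j)\sum_{\alpha\in\LW_\lambda} q^{\revmaj_\beta(\alpha)}$, equivalently restrict the word vectors to those that are lattice in each coordinate. Since the whole $D_{W,\eta}$-apparatus — the marked variant, the involution $\psi$, and the bijections $\varphi,\iota$ — restricts to the lattice sub-case without change, one obtains the sum over lattice multi-labeled $(\gamma,k^n)$-Dyck paths with a marked return.

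The main obstacle is the first step, more precisely reconciling its output with the hypotheses of \autoref{section:weights}. The two delicate points are: (a) rewriting the scalar $n\,[n]_q\,m_\gamma[B_\mu(q,1)]/B_\mu(q,1)$ as a clean combinatorial weight-sum over ``marked'' column-composition-tableau data compatible with $\varphi$ and $\iota$ — in particular pinning down exactly where the marked return sits, since $n\,[n]_q/B_\mu(q,1)$ is not simply a product over the parts of $\mu$; and (b) verifying that once the extra mark is adjoined the split/join involution genuinely ignores it, so that the fixed-point analysis carries over without change. Everything downstream of a correctly set-up first step is a routine adaptation of the $e_n$-argument, amounting to carrying one extra datum — the marked return — through the established bijections.
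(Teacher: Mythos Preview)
Your starting expansion is equivalent to the paper's (after a short symmetry computation your factor $n[n]_q/B_\mu(q,1)$ matches $(1-q^n)\lvert\SR(\mu)\rvert$ divided by $\sum_\alpha(1-q^{\alpha_1})$), but the paper reaches it more transparently via Newton's identity
\[
\omega p_n(\X)=(1-q^n)\sum_{\mu\vdash n}(-1)^{n-\ell(\mu)}\lvert\SR(\mu)\rvert\,\widehat{h}_\mu(\X),
\]
where $\SR(\mu)$ is the set of pairs (rearrangement $\alpha$ of $\mu$, circled cell in $\alpha_1$). This immediately resolves your obstacle~(a): the extra datum is a \emph{circle on a cell}, not a mark on a bar, and the global factor $(1-q^n)$ becomes the condition that \emph{some} (not necessarily the first) tableau in the sequence has $c_1=0$. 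The $\gamma$-data enter through an extra $0^{\text{th}}$ row of labels with multiplicity type $\gamma$, contributing to the local weight via the statistic $\rw$.

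The real gap is your obstacle~(b). The mark is \emph{not} inert under split/join, and the paper does not reuse $\psi$. Instead it defines a new involution $\psi_\circ$: if the circle is not ``leading'' (i.e.\ some bar lies to its left), one splits $S^1$ at the rightmost such bar and \emph{cyclically rotates} the sequence, moving the pre-circle piece to the end; if the circle is leading, one tries to join $S^\ell$ onto $S^1$, and only failing both of these does one fall back to $\psi$. Proving $\psi_\circ$ is a well-defined involution is a genuine argument (and uses $\gamma\neq\emptyset$ in an essential way, to rule out a bad single-block case). The fixed points then satisfy a \emph{cyclic} non-joining condition, $S^i\not\to S^{i+1}$ for all $i$ \emph{and} $S^\ell\not\to S^1$, with the circle leading in $S^1$. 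A modified bijection $\varphi_\circ$ (cyclically shift so that the last tableau with $c_1=0$ comes first, then apply $\varphi$) sends these to $\rho$-compatible polyominoes whose top path ends in two east-steps, with the circle becoming a marking of a row before the first return. Applying $\iota$ then yields $\RLD^\gamma_{k^n}$, and restricting to lattice labels gives $\RLLD^\gamma_{k^n}$ as you outline.

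So your downstream plan (Cauchy expansion, $\revmaj$, lattice refinement) is correct, but the core combinatorics---the cyclic involution $\psi_\circ$ and the companion bijection $\varphi_\circ$---is precisely what your proposal is missing.
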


Definitions for the relevant sets are given in the subsequent sections, and they come directly from the general framework of \Cref{section:weights}.

\subsection{Newton's Identity}
\label{sec_Newton}
The set of \emph{selected rearrangements} $\SR(\mu)$ of a partition $\mu$ are all possible pictures that may be obtained by the following procedure:
\begin{enumerate}
	\item select a rearrangement $(\alpha_1, \dots, \alpha_{\ell(\mu)})$ of the parts of $\mu$, drawn as a list of rows;
	\item select one of the cells in the first part $\alpha_1$, and mark the cell with a circle.
\end{enumerate}
For instance, to get an element of $\SR( 4,3,1,1 )$, we may rearrange the parts as $(3,1,4,1)$, and one of its $3$ possible markings is
\ywhite
\begin{align*}
	\gyoung(;;{\Circle};)\quad \yng(1)\quad \yng(4)\quad  \yng(1).
\end{align*}
It follows by the construction of this set that one has
\begin{equation}
	\lvert \SR(\mu) \rvert = \sum_{\alpha \in \R(\mu)} \alpha_1.
\end{equation}

\begin{proposition}[Newton's Identity]
	For any $n$, \[	p_n  = \sum_{\mu \vdash n} (-1)^{\ell(\mu)-1 } \lvert \SR(\mu) \rvert \, h_{\mu}. \]
\end{proposition}

It follows that \[ \widehat{p}_n(\X) = \frac{p_n(\X)}{1-q^n} = - \sum_{\mu \vdash n} (-1)^{\ell(\mu) } \lvert \SR(\mu) \rvert \, \widehat{h}_{\mu}, \]
or equivalently
\begin{equation}
	\label{Equation:omegapn}
	\omega(p_n) = (1-q^n) \sum_{\mu \vdash n} (-1)^{n-\ell(\mu)} \lvert \SR(\mu) \rvert \, \widehat{h}_{\mu}.
\end{equation}

Then we have the following.
\begin{proposition}
	For any $n$, $k$, and partition $\gamma$,
	\begin{align*}
		\widehat{\nabla}_\ast^k \widehat{\Delta}_{m_\gamma} \omega(p_n) = \sum_{\eta \vdash n} e_\eta(\X_0) \sum_{\mu \vdash n} & \sum_{\vec{\nu} \in \PR(\mu,\eta)} (-1)^{n-\ell(\mu)} (1-q^n) \bC_{\nu^1} \cdots \bC_{\nu^{\ell(\mu)}} m_{\gamma}\Big[{\textstyle \sum_{i} [\mu_i]_q }\Big] \\
		& \times \sum_{\alpha \in \R(\mu)} \alpha_1 \sum_{w \in (\mathbb{N}_{+}^k)^n} \prod_{i=1}^k q^{\revmaj_\alpha(w_{\star i})} x_{i ,w_{1i}} \cdots x_{i, w_{ni}}.
	\end{align*}
\end{proposition}
\begin{proof}
	Applying $\nabla_\ast^k$ to \Cref{Equation:omegapn}, we get
	\begin{align*}
		\widehat{\nabla}_\ast^k \widehat{\Delta}_{m_\gamma} \omega(p_n) = \sum_{\mu \vdash n} (-1)^{n-\ell(\mu)} (1-q^n) \lvert \SR(\mu) \rvert \, \left( \frac{\widehat{h}_{\mu}}{\widehat{h}_{\mu}(1)} \right)^{\otimes k} \otimes  m_{\gamma} \Big[{\textstyle \sum_{i} [\mu_i]_q } \Big]\, \widehat{h}_{\mu}.
	\end{align*}
	Recall that, using the Cauchy identity, we have \[ \widehat{h}_\mu = \sum_{\eta \vdash n} e_\eta \sum_{\vec{\nu} \in \PR (\eta,\mu)} f_{\vec{\nu}} \left[ {1}/{(1-q)} \right]; \]
	we may now interpret the product
	\[ (1-q^n)  f_{\vec{\nu}} \left[ {1}/{(1-q)} \right] = (1-q^n) (-1)^{n-\ell(\eta)} \bC_{\nu^1} \cdots \bC_{\nu^{\ell(\mu)}} \]
	as the generating function of column-composition tableaux $(C^1,\dots, C^{\ell(\mu)})$ with $C^i \in \CC_{\nu^i}$ such that $c_1(C^i) = 0$ for at least one of the $C^i$. The monomial expansion of the product $\widehat{h}_\alpha ^{\otimes k}$ is given by a sum over all words $w = (w_1,\dots, w_n)$ with $w_i \in (\mathbb{N_+})^k$, where if $w_i = (w_{i1},\dots, w_{in}) $, then $w$ contributes the term \[ \prod_{i=1}^{k} q^{\revmaj_\alpha(w_{\star i})} x_{i ,w_{1i}} \cdots x_{i, w_{ni}}. \qedhere \]%
\end{proof}

Each of the components in the summand can thus be given a combinatorial interpretation, and we are led to the following definition.

\begin{definition} \label{SCTDefinition}
	Let $\eta,\mu \vdash n$, be partitions of $n$, and let $\gamma$ be any nonempty partition. We define a sequence of \emph{labeled selected column-composition tableaux} of type $(\eta,\mu,\gamma)$, denoted by $S = (S^1,\dots, S^\ell) \in \SC^\gamma(\eta, \mu)$, as the result of the following process.
	\begin{enumerate}
		\item Select $\alpha \in \SR(\mu)$.
		\item Select $(\nu^1,\dots, \nu^{\ell(\alpha)}) \in \PR({\eta,\alpha} )$.
		\item For $i=1,\dots, \ell(\alpha)$ select $C^i \in {\CC_{\nu^i}}$ with the condition that for some $j$, $c_1 (C^j) = 0$. The selection in $\alpha_1$ is represented by placing the circle above its corresponding column in $C^1$.
		\item For notation, the words will be indexed by $j=0,\dots, k$; the $j^{\ith}$ word in the sequence $w^i$ is denoted by $w^{i}_j$; the $r^{\ith}$ letter in the word $w^i_j$ will be denoted by $w^i_{jr}$; and we denote the word $w^i_{1r}w^{i}_{2r} \cdots w^{i}_{\alpha_i,r}$ by $w^i_{\star r}$.

		For each $i = 1,\dots, \ell(\alpha)$ select a sequence of words $w^i\in (\mathbb{N}^{k+1})^{\alpha_i}$ with two extra conditions: the first condition is that $w^i_{jr}$ is nonzero for $r>0$; the second condition is that $m(w_{\star 0}) = 0^{n-\ell(\gamma)}1^{m_1(\gamma_1)} \cdots n^{m_n(\gamma_n)}$, so that, in other words, $w_{\star 0} \in \R(0^{n-\ell(\gamma)},\gamma).$
		\item Set $S^i = (C^i, w^i)$.
	\end{enumerate}
	
	Now we define \[ \rho(S^i) = q^{\lvert C^i \rvert} q^{\rw(w^{i}_{\ast 0})} \prod_{j=1}^k q^{\revmaj(w^{i}_{\star j})} x_{j,w^i_{1j}} \cdots x_{j,w^i_{\ell(w^i),j}} \] with \[ \rw(a_1,\dots, a_\ell) = \sum_{i=1}^{\ell} (\ell-i) a_i, \] and set $\rho(S) = \rho(S^1) \cdots \rho(S^{\ell(\eta)})$. We finally define the sign of $S$ as \[ \sign(S) = (-1)^{\ell(\eta)- \ell(\mu)}, \] so that it	gives again the parity for the number of vertical bars appearing in $S$.
\end{definition}
With the notation of the previous section, the statistic which looks right in this case is given by the reverse major index (and the product of the appropriate $x_{ij}$ monomials) along the last $k$ entries in the tuples of $\vec{w} = (w^1,\dots, w^{\ell(\alpha)})$, and by $\rw$ in the first entries of all the tuples.

For instance, suppose $\eta = (3,2^4,1^5)$, $\mu = (5,4,3^2,1)$, and $\gamma = (2,2,1,1)$, and consider the rearrangement $\alpha = (5,3,4,1,3)$.  We select
\[
	\vec{\nu} = ((2,2,1),(2,1),(2,1,1),(1),(3)) \in \PR({\eta,\mu}),
\]
and choose the following column-composition tableaux in $CC_{\nu^1},\dots, CC_{\nu^{\ell(\rho)}}$.
\begin{align*}
	\begin{tikzpicture}[xscale=-1,scale=.5]
		\Yboxdim{1cm}
		\ygreen
		\tyng(0cm,0cm,5,2)
		\ylw
		\tyng(0cm,-1cm,5)
		\draw[line width = .5mm] (-.25,1-1) -- (5.25,1-1);
		\draw[line width = .5mm] (2,-1) -- (2,2.25 );
		\draw[line width = .5mm] (4,-1) -- (4,1.25 );
	\end{tikzpicture} &  &
	\begin{tikzpicture}[xscale=-1,scale=.5]
		\Yboxdim{1cm}
		\ylw
		\tyng(0cm,-1cm,3)
		\draw[line width = .5mm] (-.25,1-1) -- (3.25,1-1);
		\draw[line width = .5mm] (1, -1) -- (1,1.25-1);
	\end{tikzpicture}
	                                                   &  &
	\begin{tikzpicture}[xscale=-1,scale=.5]
		\Yboxdim{1cm}
		\ygreen
		\tyng(0cm,0cm,4,2)
		\ylw
		\tyng(0cm,-1cm,4)
		\draw[line width = .5mm] (-.25,1-1) -- (4.25,1-1);
		\draw[line width = .5mm] (1, -1) -- (1,3.25-1);
		\draw[line width = .5mm] (2, -1) -- (2,3.25-1);
	\end{tikzpicture}
	                                                   &  &
	\begin{tikzpicture}[xscale=-1,scale=.5]
		\Yboxdim{1cm}
		\ygreen
		\tyng(0cm,0cm,1)
		\ylw
		\tyng(0cm,-1cm,1)
		\draw[line width = .5mm] (-.25,0) -- (1.25,0);
	\end{tikzpicture}
	                                                   &  &
	\begin{tikzpicture}[xscale=-1,scale=.5]
		\Yboxdim{1cm}
		\ylw
		\tyng(0cm,-1cm,3)
		\draw[line width = .5mm] (-.25,1-1) -- (3.25,1-1);
	\end{tikzpicture}
\end{align*}
Note here that at least one column-composition tableau has no cells in the first column. Now place a circle above a column in the first component, and replace each cell in the base row with a column of $k+1$ numbers. From left to right, the first column has the tuple $w^1_1$, the second column has the tuple $w^1_2$, and so on until we reach column $\alpha_1$ containing the tuple $w^1_{\alpha_1}$. The next column is in the second column composition tableau, and it contains the tuple $w^2_1$. We continue in this way to create $S'$ as in \Cref{figure:s_prime}.

\begin{figure}[ht]
	\begin{align*}
		\begin{tikzpicture}[ xscale=-1]
			\draw (.5,1) node {$S' =$};
			\draw (0,0) circle (0cm);
		\end{tikzpicture}                    &  &
		\begin{tikzpicture}[xscale=-1,scale=.5]
			\Yboxdim{1cm}
			\ygreen
			\tyng(0cm,0cm,5,2)
			\ylw
			\tyng(0cm,-3cm,5,5,5)
			\draw[line width = .5mm] (-.25,1-1) -- (5.25,1-1);
			\draw[line width = .5mm] (2,-3) -- (2,2.25 );
			\draw[line width = .5mm] (4,-3) -- (4,1.25 );
			\begin{large}
				\draw (.5,.5-1-1) node {$1$};
				\draw (1.5,.5-1-1) node {$3$};
				\draw (2.5,.5-1-1) node {$7$};
				\draw (3.5,.5-1-1) node {$2$};
				\draw (4.5,.5-1-1) node {$1$};
				\draw (.5,.5-1-2) node {$3$};
				\draw (1.5,.5-1-2) node {$1$};
				\draw (2.5,.5-1-2) node {$8$};
				\draw (3.5,.5-1-2) node {$2$};
				\draw (4.5,.5-1-2) node {$1$};
				\draw (.5,.5-1) node {$0$};
				\draw (1.5,.5-1) node {$0$};
				\draw (2.5,.5-1) node {$0$};
				\draw (3.5,.5-1) node {$1$};
				\draw (4.5,.5-1) node {$0$};
				\draw (2.5,1.5) node {{\Circle}};
			\end{large}
		\end{tikzpicture} &  &
		\begin{tikzpicture}[xscale=-1,scale=.5]
			\Yboxdim{1cm}
			\ylw
			\tyng(0cm,-3cm,3,3,3)
			\draw[line width = .5mm] (-.25,1-1) -- (3.25,1-1);
			\draw[line width = .5mm] (1, -3) -- (1,1.25-1);
			\begin{large}
				\draw (.5,.5-1-1) node {$2$};
				\draw (1.5,.5-1-1) node {$4 $};
				\draw (2.5,.5-1-1) node {$2 $};
				\draw (.5,.5-1-2) node {$1$};
				\draw (1.5,.5-1-2) node {$5$};
				\draw (2.5,.5-1-2) node {$4 $};
				\draw (.5,.5-1) node {$2$};
				\draw (1.5,.5-1) node {$0$};
				\draw (2.5,.5-1) node {$0 $};
			\end{large}
		\end{tikzpicture}
		                                                   &  &
		\begin{tikzpicture}[xscale=-1,scale=.5]
			\Yboxdim{1cm}
			\ygreen
			\tyng(0cm,0cm,4,2)
			\ylw
			\tyng(0cm,-3cm,4,4,4)
			\begin{large}
				\draw[line width = .5mm] (-.25,1-1) -- (4.25,1-1);
				\draw[line width = .5mm] (1, -3) -- (1,3.25-1);
				\draw[line width = .5mm] (2, -3) -- (2,3.25-1);
				\draw (.5,.5-1-1) node {$3$};
				\draw (1.5,.5-1-1) node {$1$};
				\draw (2.5,.5-1-1) node {$3 $};
				\draw (3.5,.5-1-1) node {$9 $};
				\draw (.5,.5-1-2) node {$8$};
				\draw (1.5,.5-1-2) node {$2 $};
				\draw (2.5,.5-1-2) node {$3 $};
				\draw (3.5,.5-1-2) node {$4$};
				\draw (.5,.5-1) node {$0$};
				\draw (1.5,.5-1) node {$0 $};
				\draw (2.5,.5-1) node {$2 $};
				\draw (3.5,.5-1) node {$0$};
			\end{large}
		\end{tikzpicture}            &  &
		\begin{tikzpicture}[xscale=-1,scale=.5]
			\Yboxdim{1cm}
			\ygreen
			\tyng(0cm,0cm,1)
			\ylw
			\tyng(0cm,-3cm,1,1,1)
			\begin{large}
				\draw[line width = .5mm] (-.25,1-1) -- (1.25,1-1);
				\draw (.5,.5-1-1) node {$4$};
				\draw (.5,.5-1-2) node {$1$};
				\draw (.5,.5-1) node {$0$};
			\end{large}
		\end{tikzpicture}            &  &
		\begin{tikzpicture}[xscale=-1,scale=.5]
			\Yboxdim{1cm}
			\ylw
			\tyng(0cm,-3cm,3,3,3)
			\draw[line width = .5mm] (-.25,1-1) -- (3.25,1-1);
			\begin{large}
				\draw (.5,.5-1-1) node {$4$};
				\draw (1.5,.5-1-1) node {$1 $};
				\draw (2.5,.5-1-1) node {$1 $};
				\draw (.5,.5-1-2) node {$6$};
				\draw (1.5,.5-1-2) node {$6 $};
				\draw (2.5,.5-1-2) node {$1 $};
				\draw (.5,.5-1) node {$0$};
				\draw (1.5,.5-1) node {$1$};
				\draw (2.5,.5-1) node {$0$};
			\end{large}
		\end{tikzpicture}
	\end{align*}
	\caption{An example of $S' \in \SC^{(2^2,1^2)}((3,2^4,1^5),(5,4,3^2,1)) $.}
	\label{figure:s_prime}
\end{figure}
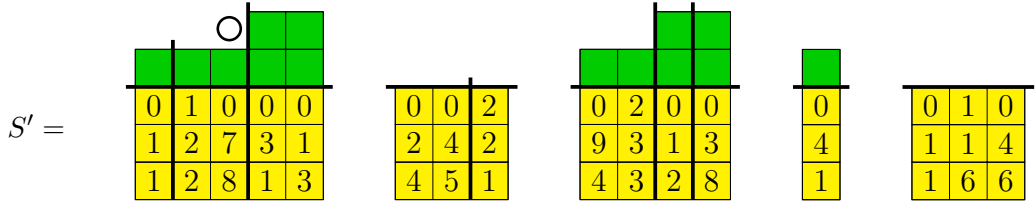

This is a selected, labeled column-composition tableau. We see that the first row of labels from the top (which is $w_{\ast 0}$) has nonzero entries which rearrange to $\gamma$. The second row of entries is given by $w_{\star 1} = w^1_{\star 1}\cdots w^{\ell(\alpha)}_{\star 1} = (1,2,7,3,1,2,4,2,9,3,1,3,4,1,1,4)$.

To compute the weight for this sequence we first count the number of cells above the base. This gives $q^{14}$. We now calculate the reverse major index for each row after the $0^{\ith}$ row. If we read the second row of labels in the first component, we get the word $w^1_{\star 1} = (1,2,7,3,1)$, which has an ascent in the first and second positions. The number of cells to the right of the first ascent is $4$, and the number of cells to the right of the second ascent is $3$. This gives $q^7$. Similarly, the last row in the first component gives the word $w^1_{\star 2} = (1,2,8,1,3)$, which has ascents in positions $1,2,4$. This gives $q^8$. Continuing in this fashion, the second labeled tableau gives $q^{2+2}$; the third gives $q^{1+1}$; and the last set of labels gives $q^{1+2}$. Therefore, all the labels corresponding to $w_{\star 1}$ and $w_{\star 2}$ contribute a total of $q^{24}$. If the label $j$ appears in $i^{\text{th}}$ row of labels, it contributes to the weight by $x_{ ij}$. Thus, the second row of labels in the first component of the sequence contributes to the total weight by a factor of $x_{11} x_{12} x_{17} x_{13} x_{11}$.

The $0^{\ith}$ row's weight is calculated as follows: In the first component, there is a $1$ in the second column. Since there are $3$ columns to its right, this label contributes a weight of $q^{1(3)}$. The second component has a two in the last column, which contributes a factor of $q^{2(0)}$. The third component gives $q^{2(2)}$, and the last component gives $q^{1(1)}$. The first row therefore contributes a factor of $q^{8}$.
In total, we have found that
\[
	\rho(S') = q^{14} q^8 q^{24} \, x_{11}^4 \, x_{12}^2 \, x_{13}^3 \, x_{14}^3 \, x_{17} \, x_{19} \, x_{21}^5 \, x_{22}^2 \, x_{23}^2 \, x_{24}^2 \, x_{25} \, x_{26}^2 \, x_{28}^2
\]

\begin{proposition} For any $k$, $n$, and nonempty partition $\gamma$, we have
	\[ \widehat{\nabla}_\ast^k \, \widehat{\Delta}_{m_\gamma} \omega(p_n) = \sum_{\eta \vdash n} \sum_{\mu \vdash n} \sum_{S \in \SC^\gamma(\eta,\mu)} \sign(S) \rho(S) \otimes e_\eta. \]
\end{proposition}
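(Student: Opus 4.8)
The plan is to take the explicit formula derived just above the statement — obtained from Newton's identity, the $t=1$ evaluations $\Ht_\mu(\X;q,1)=\widehat{h}_\mu(\X)/\widehat{h}_\mu(1)$ and $\Delta_{m_\gamma}\Ht_\mu=m_\gamma[B_\mu(q,t)]\,\Ht_\mu$, and the Cauchy expansion $\widehat{h}_\mu(\X)=\sum_\eta e_\eta(\X)\sum_{\vec{\nu}\in\PR(\eta,\mu)}f_{\vec{\nu}}[1/(1-q)]$ — namely
\[ \widehat{\nabla}_{\YY}\,\widehat{\Delta}_{m_\gamma}(\omega p_n(\X))=\sum_{\eta\vdash n}e_\eta(\X)\sum_{\mu\vdash n}(-1)^{n-\ell(\mu)}|\SR(\mu)|\,\prod_{j=1}^{k}\frac{\widehat{h}_\mu(\Y_j)}{\widehat{h}_\mu(1)}\,m_\gamma\!\Big[{\textstyle\sum_i[\mu_i]_q}\Big]\sum_{\vec{\nu}\in\PR(\eta,\mu)}(1-q^{n})f_{\vec{\nu}}[1/(1-q)], \]
and to show that, for fixed $\eta$ and $\mu$, the $\mu$-summand equals $\sum_{S\in\SC^\gamma(\eta,\mu)}\sign(S)\,\rho(S)$. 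This is the $\widehat{\Delta}_{m_\gamma}(\omega p_n)$ analogue of the $e_n$ case treated earlier: Newton's identity plays the role of $e_n=\sum_\mu f_\mu[1-q]\,\widehat{h}_\mu$, and the new factor $m_\gamma[\sum_i[\mu_i]_q]$ will be carried by the extra ($0$-indexed) row of labels. The method is to read each of the four $\mu$-dependent factors as a weighted enumeration of one of the pieces of Definition~\ref{SCTDefinition}, then multiply, the choices being independent once a block structure is fixed.

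The first step is to trade the scalar $|\SR(\mu)|=\sum_{\alpha\in\R(\mu)}\alpha_1$ for a sum over selected rearrangements. Since $m_\gamma[\sum_i[\mu_i]_q]$, the product $\prod_j\widehat{h}_\mu(\Y_j)/\widehat{h}_\mu(1)$, and $\sum_{\vec{\nu}\in\PR(\eta,\mu)}f_{\vec{\nu}}[1/(1-q)]$ each depend on $\mu$ only through the multiset of its parts, one may rewrite $|\SR(\mu)|$ times these three factors as $\sum_{\alpha\in\SR(\mu)}$ of the same three factors, now organized into blocks of sizes $\alpha_1,\dots,\alpha_{\ell(\mu)}$, with $\vec{\nu}=(\nu^1,\dots,\nu^{\ell(\mu)})\in\PR(\eta,\alpha)$ — that is, $\nu^i\vdash\alpha_i$ and the union of all parts rearranges to $\eta$ — and the marked cell of $\alpha_1$ recorded as the circle over the corresponding column of $C^1$. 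This is precisely steps (1)--(2) of Definition~\ref{SCTDefinition}, and note $\ell(\alpha)=\ell(\mu)$.

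Next I would interpret the three remaining factors blockwise. For the sign together with $\sum_{\vec{\nu}}(1-q^n)f_{\vec{\nu}}[1/(1-q)]$: from $f_\nu[1/(1-q)]=(-1)^{|\nu|-\ell(\nu)}\bC_\nu(q)$, the identity $\overline{\bC}_\nu=(1-q^{|\nu|})\bC_\nu=\sum_{C\in\overline{\CC}_\nu}q^{|C|}$, inclusion--exclusion, and $\sum_i|\nu^i|=n$, $\sum_i\ell(\nu^i)=\ell(\eta)$, the product $(-1)^{n-\ell(\mu)}(1-q^n)f_{\vec{\nu}}[1/(1-q)]$ equals $(-1)^{\ell(\eta)-\ell(\mu)}$ times the generating function, by $q^{\sum_i|C^i|}$, of tuples $(C^i)_i$ with $C^i\in\CC_{\nu^i}$ and at least one $C^j$ having empty first column; this supplies $\sign(S)$, step (3), and the factors $q^{|C^i|}$ of $\rho(S^i)$. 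For $\prod_{j=1}^k\widehat{h}_\mu(\Y_j)/\widehat{h}_\mu(1)$: the $t=1$ form of the Haglund--Haiman--Loehr formula recalled above expands each $\widehat{h}_\mu(\Y_j)/\widehat{h}_\mu(1)$ as $\sum q^{\revmaj_\alpha(v)}\prod_r y_{j,v_r}$ over length-$n$ words $v$ with positive entries, and distributing $\revmaj_\alpha$ over the blocks of $\alpha$ yields the length-$\alpha_i$ words $w^i_{\star j}$ and the factors $\prod_{j\ge1}q^{\revmaj(w^i_{\star j})}\prod_r y_{j,w^i_{jr}}$ — the $j\ge1$ part of step (4). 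Finally, $m_\gamma[\sum_i[\mu_i]_q]=m_\gamma[\sum_i[\alpha_i]_q]$: writing the argument as the multiset of monomials $\bigsqcup_i\{q^{0},q^{1},\dots,q^{\alpha_i-1}\}$ and applying the combinatorial formula for $m_\gamma$, this equals $\sum q^{\sum_i\rw(w^i_0)}$ over fillings $w_0$ of the $n$ columns by nonnegative integers whose nonzero entries, listed in decreasing order, form $\gamma$; indeed $q^{\rw(w^i_0)}=\prod_p(q^{\alpha_i-p})^{w^i_{0,p}}$ gives position $p$ of block $i$ the monomial $q^{\alpha_i-p}$, and as $p$ ranges over $1,\dots,\alpha_i$ these are exactly the exponents occurring in $[\alpha_i]_q$. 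This gives the content condition on $w_{\star0}$ and the factors $q^{\rw(w^i_0)}$ — the $j=0$ part of step (4).

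Multiplying the four blockwise enumerations, the $\mu$-summand of the coefficient of $e_\eta(\X)$ becomes exactly $\sum_{S\in\SC^\gamma(\eta,\mu)}\sign(S)\,\rho(S)$; summing over $\mu$, and then over $\eta$, proves the proposition. The step I expect to be the main obstacle is the bookkeeping around replacing $|\SR(\mu)|$ by a sum over selected rearrangements: one must check that no over- or under-counting occurs — this works precisely because the three surviving factors are invariant under rearranging the parts of $\mu$, while the extra multiplicity $\alpha_1$ produced by Newton's identity is absorbed as the choice of circled column in $C^1$ — and verify that, after the replacement, the block structures of the column-composition shapes $\nu^i$, of the $\Y$-labels $w^i_{\star j}$, and of the $0$-th row $w^i_0$ are all indexed by the single composition $\alpha$, so that the $\revmaj$ and $\rw$ statistics localize to the blocks exactly as written in $\rho(S^i)$.
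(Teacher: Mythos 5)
Your proposal is correct and follows essentially the same route as the paper, which presents this proposition as the direct assembly of the displayed computation from Newton's identity (the eigenvalue factors $m_\gamma[\sum_i[\mu_i]_q]$ and $\widehat{h}_\mu(\Y_j)/\widehat{h}_\mu(1)$, and the Cauchy expansion of $\widehat{h}_\mu(\X)$ into $e_\eta$'s) together with the blockwise combinatorial readings of each factor — the $(1-q^n)f_{\vec{\nu}}$ term as tuples of column-composition tableaux with at least one zero first column, $\revmaj$ for the $\Y$-labels, $\rw$ for the $0$-row, and $\lvert\SR(\mu)\rvert$ absorbed as the choice of rearrangement and circled column. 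Your write-up merely makes explicit the sign bookkeeping and the $m_\gamma$/$\rw$ interpretation that the paper leaves implicit, and both are verified correctly.
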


\subsection{A sign-reversing involution}

We define a variant of the split map from \cite{Romero-Thesis},\cite{IraciRomero2022DeltaTheta},\cite{Hicks-Romero-2018} that reduces the infinite, signed sum of selected, labeled column-composition tableaux to a finite number of fixed points. We start by defining the split map at at any vertical bar. To this end, we define the relative local weight of two sequences of words with equal lengths, as is done in \Cref{section:weights}.

Specializing the previous definitions, we have the following relative statistics.
	
\begin{definition}
	Let $A \in (\mathbb{N}^{k+1})^a$ and $B \in (\mathbb{N}^{k+1})^b$ be two words of tuples of length $k+1$. Then the \emph{relative statistic} between $A$ and $B$ is the quantity
	\[ \D(A;B) = A_{10}+ \cdots + A_{a0}  +\sum_{i=1}^k \asc(A_{1i}\cdots A_{ai} B_{1i}). \]
\end{definition}

\begin{definition}
	Let $T = (C,w )$ be a selected, labeled column-composition tableau, let $A = (w_1,\dots, w_r)$ be the the first $r$ letters of $w$, and let $B = (w_{r+1},\dots, w_\ell)$ be the remaining portion of $w$. Suppose there is a vertical bar in $T$ after column $r$. Then we define the \emph{split of $T$ at $r$} to be $\overline{\spl}_r(T)  = ( T^1,T^2)$ by the following procedure.
	\begin{enumerate}
		\item  Set
		      $T^1 = (C^1,A)$ where $C^1$ consists of the first $r$ columns of $C$. If one of the first $r$ columns of $T$ was selected, we keep the selection over the corresponding column in $C^1$.
		\item Set
		      $T^2 = (C^2,B)$ where $C^2$ is made by starting with the last $\ell-r$ columns of $C$ and adding
		      $
			      \D(A;B)
		      $
		      cells to each column.
		      If there was a selected column in $T$ in the last $\ell-r$ columns, then we keep the selection over the corresponding column in $C^2$.
	\end{enumerate}
\end{definition}

\begin{definition}
	Suppose $T^1 = (C^1,A)$ and $T^2 = (C^2,B)$ are the split of some selected column-composition tableau, $T$. This means that $\overline{\spl}_r(T) = (T^1,T^2)$ for some $r$. Then we say that $T^1$ and $T^2$ can \emph{join} and we set $\join(T^1,T^2) = T$. Such a $T$ is unique and exists if and only if the following inequality holds:
	\[
		c_1(T^2)  \geq c_\ell(T^1) +\D(A;B).
	\]
\end{definition}

Our previous work in \Cref{section:weights} implies that the map $\overline{\spl}_r$ is weight-preserving.

\begin{definition} \label{psicirc} Given $S = (S^1,\dots, S^{\ell}) \in \SC^\gamma(\eta,\mu)$, we say that the circle is \emph{leading} in $S$ if there are no vertical bars to its left (so that it is in the first component of $S^1$). Define a map $\psi_{\circ}$ by the following process:
	\begin{enumerate}
		\item If the circle is not leading in $S$, then there is a right-most vertical bar to the left of the circle, say occurring after column $r$. Then if $\overline{\spl}_r (S^1) = (R^1,R^2)$, set $\psi_{\circ}(S) = (R^2,S^2,\dots, S^\ell, R^1).$
		\item If the circle is leading and $S^\ell$ can join $S^1$, then $\psi_{\circ}(S) = ( \join (S^\ell,S^1), S^2,\dots, S^{\ell-1})$.
		\item If the circle is leading and $S^\ell$ cannot join $S^1$, then $\psi_{\circ}(S) = \psi(S)$, as given in \Cref{Definition:split}.
	\end{enumerate}
\end{definition}

For example, the configuration $S'$ of \Cref{figure:s_prime} maps to the configuration $\psi_{\circ}(S')$ in \Cref{figure:phis_prime}.

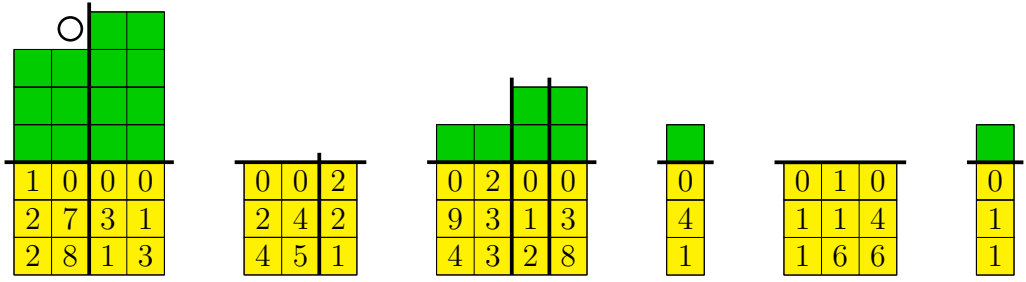
\begin{figure}[ht]
\begin{align*}
	\begin{tikzpicture}[xscale=-1,scale=.5]
		\Yboxdim{1cm}
		\ygreen
		\tyng(0cm,0cm,4,4,4,2)
		\ylw
		\tyng(0cm,-3cm,4,4,4)
		\draw[line width = .5mm] (-.25,1-1) -- (4.25,1-1);
		\draw[line width = .5mm] (2,-3) -- (2,4.25 );
		\begin{large}
			\draw (.5,.5-1-1) node {$1$};
			\draw (1.5,.5-1-1) node {$3 $};
			\draw (2.5,.5-1-1) node {$7 $};
			\draw (3.5,.5-1-1) node {$2 $};
			\draw (.5,.5-1-2) node {$3$};
			\draw (1.5,.5-1-2) node {$1$};
			\draw (2.5,.5-1-2) node {$8 $};
			\draw (3.5,.5-1-2) node {$2 $};
			\draw (.5,.5-1) node {$0$};
			\draw (1.5,.5-1) node {$0$};
			\draw (2.5,.5-1) node {$0 $};
			\draw (3.5,.5-1) node {$1 $};
			\draw (2.5,3.5) node {{\Circle}};
		\end{large}
	\end{tikzpicture} &  &
	\begin{tikzpicture}[xscale=-1,scale=.5]
		\Yboxdim{1cm}
		\ylw
		\tyng(0cm,-3cm,3,3,3)
		\draw[line width = .5mm] (-.25,1-1) -- (3.25,1-1);
		\draw[line width = .5mm] (1, -3) -- (1,1.25-1);
		\begin{large}
			\draw (.5,.5-1-1) node {$2$};
			\draw (1.5,.5-1-1) node {$4 $};
			\draw (2.5,.5-1-1) node {$2 $};
			\draw (.5,.5-1-2) node {$1$};
			\draw (1.5,.5-1-2) node {$5$};
			\draw (2.5,.5-1-2) node {$4 $};
			\draw (.5,.5-1) node {$2$};
			\draw (1.5,.5-1) node {$0$};
			\draw (2.5,.5-1) node {$0 $};
		\end{large}
	\end{tikzpicture}
	                                                   &  &
	\begin{tikzpicture}[xscale=-1,scale=.5]
		\Yboxdim{1cm}
		\ygreen
		\tyng(0cm,0cm,4,2)
		\ylw
		\tyng(0cm,-3cm,4,4,4)
		\begin{large}
			\draw[line width = .5mm] (-.25,1-1) -- (4.25,1-1);
			\draw[line width = .5mm] (1, -3) -- (1,3.25-1);
			\draw[line width = .5mm] (2, -3) -- (2,3.25-1);
			\draw (.5,.5-1-1) node {$3$};
			\draw (1.5,.5-1-1) node {$1$};
			\draw (2.5,.5-1-1) node {$3 $};
			\draw (3.5,.5-1-1) node {$9 $};
			\draw (.5,.5-1-2) node {$8$};
			\draw (1.5,.5-1-2) node {$2 $};
			\draw (2.5,.5-1-2) node {$3 $};
			\draw (3.5,.5-1-2) node {$4$};
			\draw (.5,.5-1) node {$0$};
			\draw (1.5,.5-1) node {$0 $};
			\draw (2.5,.5-1) node {$2 $};
			\draw (3.5,.5-1) node {$0$};
		\end{large}
	\end{tikzpicture}            &  &
	\begin{tikzpicture}[xscale=-1,scale=.5]
		\Yboxdim{1cm}
		\ygreen
		\tyng(0cm,0cm,1)
		\ylw
		\tyng(0cm,-3cm,1,1,1)
		\begin{large}
			\draw[line width = .5mm] (-.25,1-1) -- (1.25,1-1);
			\draw (.5,.5-1-1) node {$4$};
			\draw (.5,.5-1-2) node {$1$};
			\draw (.5,.5-1) node {$0$};
		\end{large}
	\end{tikzpicture}            &  &
	\begin{tikzpicture}[xscale=-1,scale=.5]
		\Yboxdim{1cm}
		\ylw
		\tyng(0cm,-3cm,3,3,3)
		\draw[line width = .5mm] (-.25,1-1) -- (3.25,1-1);
		\begin{large}
			\draw (.5,.5-1-1) node {$4$};
			\draw (1.5,.5-1-1) node {$1 $};
			\draw (2.5,.5-1-1) node {$1 $};
			\draw (.5,.5-1-2) node {$6$};
			\draw (1.5,.5-1-2) node {$6 $};
			\draw (2.5,.5-1-2) node {$1 $};
			\draw (.5,.5-1) node {$0$};
			\draw (1.5,.5-1) node {$1$};
			\draw (2.5,.5-1) node {$0$};
		\end{large}
	\end{tikzpicture} &  &
	\begin{tikzpicture}[xscale=-1,scale=.5]
		\Yboxdim{1cm}
		\ygreen
		\tyng(0cm,0cm,1)
		\ylw
		\tyng(0cm,-3cm,1,1,1)
		\draw[line width = .5mm] (-.25,1-1) -- (1.25,1-1);
		\begin{large}
			\draw (.5,.5-1) node {$0$};
			\draw ( .5,.5-2) node {$1 $};
			\draw ( .5,.5-3) node {$1 $};
		\end{large}
	\end{tikzpicture}
\end{align*}
\caption{An example of $\psi_{\circ}(S') $.}
	\label{figure:phis_prime}
\end{figure}

\begin{proposition}
	For $|\gamma|>0$, the map $\psi_{\circ}$ defined above is a weight-preserving, sign-reversing involution.
\end{proposition}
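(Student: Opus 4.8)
The plan is to verify the three properties separately; weight-preservation and the sign-reversing property are short, while the involution property is the crux. For the first two, recall that every elementary move used in \magenta{Definition}~\ref{psicirc} is weight-preserving: $\overline{\spl}_r$ is weight-preserving (the selected analogue of \magenta{Proposition}~\ref{weightpreserving}, proved from the same identity for $\D$), $\join$ is its two-sided inverse and hence also weight-preserving, and the map $\psi$ of \autoref{section:weights} is a weight-preserving, sign-reversing involution. Since $\weight$ and $\rho$ are products over the components of a sequence, they are insensitive to the cyclic reordering of components occurring in cases (1) and (2); so in every case $\psi_\circ$ is the composition of a weight-preserving map with a rotation, hence weight-preserving. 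For the sign, write $\sign(S)=(-1)^{b(S)}$ with $b(S)$ the number of vertical bars of $S$ (so $b(S)\equiv\ell(\eta)-\ell(\mu)$): then $\overline{\spl}_r$ removes exactly one bar (the bar it splits at), $\join$ adds exactly one, $\psi$ flips the parity of $b$, and rotation leaves $b$ unchanged, so $\psi_\circ$ is sign-reversing in each of the three cases.

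For the involution property I would partition $\SC^\gamma(\eta,\mu)$ into the three sets to which the three cases of \magenta{Definition}~\ref{psicirc} apply: $\mathcal A$ (circle not leading), $\mathcal B$ (circle leading and $S^\ell$ can join $S^1$), and $\mathcal C$ (circle leading and $S^\ell$ cannot join $S^1$). The first claim is that $\psi_\circ$ is a self-inverse bijection interchanging $\mathcal A$ and $\mathcal B$. If $S=(S^1,\dots,S^\ell)\in\mathcal A$, splitting $S^1$ at the right-most vertical bar to the left of the circle puts the circle into the back piece $R^2$ with no bar to its left, so $\psi_\circ(S)=(R^2,S^2,\dots,S^\ell,R^1)$ has its circle leading; and $R^1$ can join $R^2$ with $\join(R^1,R^2)=S^1$ by the very construction of $\overline{\spl}_r$, so $\psi_\circ(S)\in\mathcal B$ and case (2) sends it back to $S$. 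Conversely, if $S\in\mathcal B$ then in $J:=\join(S^\ell,S^1)$ the newly created bar lies just to the left of the formerly-leading circle and is the right-most such bar, so $\psi_\circ(S)=(J,S^2,\dots,S^{\ell-1})\in\mathcal A$, and case (1) splits $J$ precisely at that bar, recovering $(S^\ell,S^1)$ and hence $S$.

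It then remains to show $\psi(\mathcal C)\subseteq\mathcal C$, after which $\psi_\circ^2=\psi^2=\mathrm{id}$ on $\mathcal C$ since $\psi_\circ$ agrees with the involution $\psi$ there. That the circle stays leading under $\psi$ follows from its recursive definition: $\psi$ either splits the first component at its first bar (the leading circle, having no bar to its left, lands in the front piece), or joins the first two components (the new bar is created to the right of the circle), or recurses on the tail (leaving the first component untouched). For the join condition, introduce the defect $\delta(X,Y)=c_1(Y)-c_\ell(X)-\D(w(X);w(Y))$, so that $X$ joins $Y$ iff $\delta(X,Y)\ge 0$, and assume $\delta(S^\ell,S^1)<0$. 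The first component of $\psi(S)$ is $S^1$, or the front piece of $\spl(S^1)$, or $\join(S^1,S^2)$; each of these has the same $c_1$ and the same first letter of its reading word as $S^1$, and since $\D(X;\cdot)$ sees its second argument only through that first letter, $\delta\bigl(X,\operatorname{first}(\psi(S))\bigr)=\delta(X,S^1)$ for all $X$. By the same recursion, the last component of $\psi(S)$ is $S^\ell$, or $\join(S^{\ell-1},S^\ell)$, or the back piece of $\spl(S^\ell)$; using the additivity $\D(w(U)w(V);w(W))=\D(w(U);w(V))+\D(w(V);w(W))$ — which follows from additivity of the $0$th-digit sum together with $\asc(U_{\star i}V_{\star i}c)=\asc(U_{\star i}v^i_1)+\asc(V_{\star i}c)$ — and the fact that $\join(U,V)$ shortens the last column by exactly $\D(w(U);w(V))$, a short calculation gives $\delta\bigl(\operatorname{last}(\psi(S)),S^1\bigr)=\delta(S^\ell,S^1)<0$ in each of the three cases. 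Combining, $\delta\bigl(\operatorname{last}(\psi(S)),\operatorname{first}(\psi(S))\bigr)<0$, so $\psi(S)\in\mathcal C$.

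I expect the main obstacle to be precisely this last paragraph: first the small internal induction cataloguing how $\psi$'s recursion can alter the first and last components, and then the defect cancellation for each of the resulting cases. A few degenerate corners — $\ell(\eta)=1$, or a single component "joining itself" — are the place where the hypothesis $|\gamma|>0$ enters: it forces some $0$th-digit of $w(X)$ to be positive, hence $\D(w(X);w(X))>0\ge c_1(X)-c_\ell(X)$, so such self-joins never occur and each small case falls squarely into $\mathcal A$ or $\mathcal C$. (The fixed points of $\psi_\circ$ are then exactly the elements of $\mathcal C$ fixed by $\psi$ — circle leading, no admissible split or join — which is what enters the subsequent polyomino and Dyck-path descriptions.)
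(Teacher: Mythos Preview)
Your proof is correct and follows essentially the same route as the paper: partition into the three cases, observe that cases (1) and (2) are mutually inverse, and show that case (3) is $\psi$-stable by proving that the quantity $c_\ell(\text{last}) + \D(\text{last};\text{first})$ (equivalently, your defect $\delta$) is unchanged by $\psi$, together with the single-component degeneracy handled via $|\gamma|>0$. The only cosmetic differences are that you package the invariance via the additivity identity $\D(UV;W)=\D(U;V)+\D(V;W)$ and fold the degenerate case into the same defect computation, whereas the paper does the equivalent arithmetic directly and treats the single-component case by a separate argument using $c_1(S^1)=0$.
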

\begin{proof}[\bf Proof]
	The proof follows just as in \Cref{section:weights}. Clearly, steps 1 and 2 are inverses. And we know $\varphi$ in step 3 is a weight-preserving, sign-reversing involution. Thus, we need to show that if $S$ falls into cases 1 or 2, then $\psi_{\circ}(S)$ also falls into case 1 or 2. And if $S = (S^1,\dots, S^r)$ falls into case 3, then so does $\psi(S)$. It therefore suffices to show that for $S$ in case 3, $\psi(S) = (T^1,\dots, T^s)$ also falls into case 3.

	Suppose the circle is leading in $S^1$ and $S^r$ cannot join $S^1$. Then this means that $c_1(S^1)< c_\ell(S^r)+ \D(S^r ; S^1)$. By definition of $\psi$, we must always have $c_\ell(S^r) + \D( S^r ; S^1) = c_\ell(T^r) + \D(T^r ; T^1)$. This is because if $\psi$ splits $S^r$ into $(T^{s-1}, T^s)$, then the last column of $S^r$ increases by $\D(T^{s-1};T^s)$ in order to create $c_\ell(T^s)$. In other words, $c_\ell(T^s) = \D( T^{s-1} ; T^s) + c_\ell(S^r)$. Therefore using that $\D(T^{r-1} ; T^r)+ \D( T^r ; T^1) = \D(S^r ; S^1),$
	\[
		c_\ell(T^s) +\D(T^r;T^1)= c_\ell(S^r) + \D(T^{s-1};T^s) + \D(T^r;T^1) =  c_\ell(S^r) + \D(S^r;S^1).
	\]
	If instead $\psi$ joins $S^{r -1}, S^r$ to make $T^s$, then this argument also works if we interchange $T^s, T^{s-1}$ by $S^r, S^{r-1}$.

	There is a problem with this argument when $S = (S^1)$ consists of a single part, since we would not have the condition that $S^\ell$ cannot join $S^1$. Suppose the circle is leading in $S$ and $\psi(S) = (T^1,T^2)$. We have to make sure that $T^2$ does not join with $T^1$; otherwise, we would have something from case 3 landing in case 2. This is only true if $\gamma$ is nonempty:

	If there is a nonempty label $i$ in $T^1$, then $c_\ell(T^2) > c_\ell(S^1) $, since at least $i$ new cells were added to each column to form $T^2$. In particular $c_\ell(T^2) >0$, while (from the definition of $\SC^\gamma(\eta,\mu)$), $c_1(S^1) = c_1(T^1) = 0$. Therefore, $T^2$ cannot join $T^1$.  On the other hand, if there is a nonzero label in $T^2$, then to join $T^2$ and $T^1$, we would need $c_1(T^1) \geq i > 0$. But since $c_1(T^1) = c_1(S^1) = 0$, we have that this cannot be so. Therefore, if $S$ falls into case 3 then so will $\psi_{\circ}(S)$.
\end{proof}

Note that all the fixed points must be in $\SC^\gamma(\eta,\eta)$ for some $\eta$. Let $\bSM_{ \eta}^\gamma$ be the set of fixed points of $\psi_{\circ}$ of type $\eta$. Then we have shown the following.
\begin{proposition}
	For any $n$, $k$, and nonempty partition $\gamma$, 
	\[ \widehat{\nabla}_\ast^k \, \widehat{\Delta}_{m_{\gamma}} \omega(p_n) = \sum_{\mu \vdash n} \sum_{S \in \bSM^\gamma_\eta} \rho(S) \otimes e_\eta. \]
\end{proposition}
The set $\bSM^\gamma_\eta$ is given as follows. An element $S = (S^1,\dots, S^r) \in \bSM^\gamma_\eta$ is fixed by $\psi^{\circ}$ if
\begin{enumerate}
	\item  For all $i$, $S^i$ cannot split, meaning there are no bars. This means that for each $i$, $c_1(S^i) = c_\ell(S^i)$ and we can therefore denote this quantity with no subscripts: $c(S^i) = c_1(S^i) = c_\ell(S^i)$.
	\item   For all $i< \ell$, $S^i$ cannot join $S^{i+1}$, and $S^\ell$ cannot join $S^1$.
	\item  The circle is leading in $S^1$.
\end{enumerate}
So each $S^i$ consists of a single part from $\eta$, $c(S^{i+1}) < c (S^i) +\D(S^i;S^{i+1})$, and also $c (S^1) < c(S^\ell) +\D(S^\ell;S^1)$ .
For example, suppose $\eta = (4,3,1,1)$ and $\gamma = (3,2,2,1,1,1)$. Then to create an element $S = (S^1,S^2, S^3, S^4)  \in \bSM^\gamma_\eta$, first pick a rearrangement of $\eta$, say $(3,1,4,1)$, and select a cell to circle in the first part. These will be the base rows of the tableaux.
\begin{align*}
	\begin{tikzpicture}[xscale=-1,scale=.5]
		\Yboxdim{1cm}
		\ylw
		\tyng(0cm,-1cm,3 )
		\draw[line width = .5mm] (-.25,1-1) -- (3.25,1-1);
		\begin{large}
			\draw ( 1.5,-.5) node {{\Circle}};
		\end{large}
	\end{tikzpicture}
	 &  &
	\begin{tikzpicture}[xscale=-1,scale=.5]
		\Yboxdim{1cm}
		\ylw
		\tyng(0cm,-1cm,1)
		\draw[line width = .5mm] (-.25,1-1) -- (1.25,1-1);
	\end{tikzpicture}
	 &  &
	\begin{tikzpicture}[xscale=-1,scale=.5]
		\Yboxdim{1cm}
		\ylw
		\tyng(0cm,-1cm,4)
		\draw[line width = .5mm] (-.25,1-1) -- (4.25,1-1);
	\end{tikzpicture}
	 &  &
	\begin{tikzpicture}[xscale=-1,scale=.5]
		\Yboxdim{1cm}
		\ylw
		\tyng(0cm,-1cm,1)
		\draw[line width = .5mm] (-.25,1-1) -- (1.25,1-1);
	\end{tikzpicture}
\end{align*}
\noindent Choose
a rearrangement of $\gamma, 0^{|\mu| - \ell(\gamma)}$, say $(1,0,3,1,0,2,1,0,2)$, to place in the base rows.
\begin{align*}
	\begin{tikzpicture}[ scale=.5]
		\Yboxdim{1cm}
		\ylw
		\tyng(0cm,-1cm,3)
		\draw[line width = .5mm] (-.25,1-1) -- (3.25,1-1);
		\begin{large}
			\draw ( 1.5,  .5) node {{\Circle}};
			\draw (.5,-.5) node { $1$};
			\draw (1.5,-.5) node { $0$};
			\draw (2.5,-.5) node { $3$};
		\end{large}
	\end{tikzpicture}
	 &  &
	\begin{tikzpicture}[ scale=.5]
		\Yboxdim{1cm}
		\ylw
		\tyng(0cm,-1cm,1)
		\draw[line width = .5mm] (-.25,1-1) -- (1.25,1-1);
		\begin{large}
			\draw (.5,-.5) node {$1$};
		\end{large}
	\end{tikzpicture}
	 &  &
	\begin{tikzpicture}[ scale=.5]
		\Yboxdim{1cm}
		\ylw
		\tyng(0cm,-1cm,4)
		\draw[line width = .5mm] (-.25,1-1) -- (4.25,1-1);
		\begin{large}
			\draw (.5,-.5) node {$0$};
			\draw (1.5,-.5) node {$2$};
			\draw (2.5,-.5) node {$1$};
			\draw (3.5,-.5) node {$0$};
		\end{large}
	\end{tikzpicture}
	 &  &
	\begin{tikzpicture}[  scale=.5]
		\Yboxdim{1cm}
		\ylw
		\tyng(0cm,-1cm,1)
		\draw[line width = .5mm] (-.25,1-1) -- (1.25,1-1);
		\begin{large}
			\draw (.5,-.5) node {$2$};
		\end{large}
	\end{tikzpicture}
\end{align*}
\noindent Now place two more letters under each of the cells in the bottom row.
\begin{align*}
	\begin{tikzpicture}[ scale=.5]
		\Yboxdim{1cm}
		\ylw
		\tyng(0cm,-3cm,3,3,3)
		\draw[line width = .5mm] (-.25,1-1) -- (3.25,1-1);
		\begin{large}
			\draw ( 1.5,  .5) node {{\Circle}};
			\draw (.5,-.5) node { $1$};
			\draw (1.5,-.5) node { $0$};
			\draw (2.5,-.5) node { $3$};
			\draw (.5,-1.5) node { $1$};
			\draw (1.5,-1.5) node { $1$};
			\draw (2.5,-1.5) node { $3$};
			\draw (.5,-2.5) node { $1$};
			\draw (1.5,-2.5) node { $2$};
			\draw (2.5,-2.5) node { $3$};
		\end{large}
	\end{tikzpicture}
	 &  &
	\begin{tikzpicture}[ scale=.5]
		\Yboxdim{1cm}
		\ylw
		\tyng(0cm,-3cm,1,1,1)
		\draw[line width = .5mm] (-.25,1-1) -- (1.25,1-1);
		\begin{large}
			\draw (.5,-.5) node {$1$};
			\draw (.5,-1.5) node {$1$};
			\draw (.5,-2.5) node {$1$};
		\end{large}
	\end{tikzpicture}
	 &  &
	\begin{tikzpicture}[ scale=.5]
		\Yboxdim{1cm}
		\ylw
		\tyng(0cm,-3cm,4,4,4)
		\draw[line width = .5mm] (-.25,1-1) -- (4.25,1-1);
		\begin{large}
			\draw (.5,-.5) node {$0$};
			\draw (1.5,-.5) node {$2$};
			\draw (2.5,-.5) node {$1$};
			\draw (3.5,-.5) node {$0$};
			\draw (.5,-1.5) node {$3$};
			\draw (1.5,-1.5) node {$2$};
			\draw (2.5,-1.5) node {$1$};
			\draw (3.5,-1.5) node {$2$};
			\draw (.5,-2.5) node {$2$};
			\draw (1.5,-2.5) node {$2$};
			\draw (2.5,-2.5) node {$1$};
			\draw (3.5,-2.5) node {$3$};
		\end{large}
	\end{tikzpicture}
	 &  &
	\begin{tikzpicture}[  scale=.5]
		\Yboxdim{1cm}
		\ylw
		\tyng(0cm,-3cm,1,1,1)
		\draw[line width = .5mm] (-.25,1-1) -- (1.25,1-1);
		\begin{large}
			\draw (.5,-.5) node {$2$};
			\draw (.5,-1.5) node {$1$};
			\draw (.5,-2.5) node {$1$};
		\end{large}
	\end{tikzpicture}
\end{align*}
\noindent To construct $(S^1,\dots, S^4)$ we now choose a value for $c(S^i)$ so that $c(S^{i+1})<c(S^i)+D(S^i;S^{i+1})$ and $c(S^1) < c(S^4)+ D(S^4;S^1)$. We require $c(S^i) = 0$ for at least one $i$.
\begin{align*}
	\begin{tikzpicture}[ scale=.5]
		\Yboxdim{1cm}
		\ygreen
		\tyng(0cm,0cm,3)
		\ylw
		\tyng(0cm,-3cm,3,3,3)
		\draw[line width = .5mm] (-.25,1-1) -- (3.25,1-1);
		\begin{large}
			\draw ( 1.5,  1.5) node {{\Circle}};
			\draw (.5,-.5) node { $1$};
			\draw (1.5,-.5) node { $0$};
			\draw (2.5,-.5) node { $3$};
			\draw (.5,-1.5) node { $1$};
			\draw (1.5,-1.5) node { $1$};
			\draw (2.5,-1.5) node { $3$};
			\draw (.5,-2.5) node { $1$};
			\draw (1.5,-2.5) node { $2$};
			\draw (2.5,-2.5) node { $3$};
		\end{large}
	\end{tikzpicture}
	 &  &
	\begin{tikzpicture}[ scale=.5]
		\Yboxdim{1cm}
		\ylw
		\tyng(0cm,-3cm,1,1,1)
		\draw[line width = .5mm] (-.25,1-1) -- (1.25,1-1);
		\begin{large}
			\draw (.5,-.5) node {$1$};
			\draw (.5,-1.5) node {$1$};
			\draw (.5,-2.5) node {$1$};
		\end{large}
	\end{tikzpicture}
	 &  &
	\begin{tikzpicture}[ scale=.5]
		\Yboxdim{1cm}
		\ygreen
		\tyng(0cm,0cm,4,4)
		\ylw
		\tyng(0cm,-3cm,4,4,4)
		\draw[line width = .5mm] (-.25,1-1) -- (4.25,1-1);
		\begin{large}
			\draw (.5,-.5) node {$0$};
			\draw (1.5,-.5) node {$2$};
			\draw (2.5,-.5) node {$1$};
			\draw (3.5,-.5) node {$0$};
			\draw (.5,-1.5) node {$3$};
			\draw (1.5,-1.5) node {$2$};
			\draw (2.5,-1.5) node {$1$};
			\draw (3.5,-1.5) node {$2$};
			\draw (.5,-2.5) node {$2$};
			\draw (1.5,-2.5) node {$2$};
			\draw (2.5,-2.5) node {$1$};
			\draw (3.5,-2.5) node {$3$};
		\end{large}
	\end{tikzpicture}
	 &  &
	\begin{tikzpicture}[  scale=.5]
		\Yboxdim{1cm}
		\ylw
		\tyng(0cm,-3cm,1,1,1)
		\draw[line width = .5mm] (-.25,1-1) -- (1.25,1-1);
		\begin{large}
			\draw (.5,-.5) node {$2$};
			\draw (.5,-1.5) node {$1$};
			\draw (.5,-2.5) node {$1$};
		\end{large}
	\end{tikzpicture}
\end{align*}
\noindent
In this case, we have the following values. To better illustrate the relative statistics, we have written $D(S^i;S^{i+1}) = a+b$ where $a$ is the sum of the labels in the $0^{\ith}$ row and $b$ is the contribution from the ascents in the lower rows.
\begin{align*}
	D(S^1;S^2) = 4+ 4 &  & D(S^2;S^3) =1 + 2 &  & D(S^3;S^4) = 3+2 &  & D(S^4;S^1) = 2+0 \\
	c(S^1) = 1        &  & c(S^2) = 0        &  & c(S^3) = 2       &  & c(S^4) = 0
\end{align*}
The weight of this sequence of labeled, selected column-composition tableaux is given by
\[
	\rho(S) =  q^{24}  \, x_{11}^5 \, x_{12}^2 \, x_{13}^2 \, x_{21}^4 \, x_{22}^3 \, x_{23}^2.
\]

\begin{definition} \label{retPoly}
	Recall that a parallelogram polyomino $(P,Q) $ consists of two paths that only touch at the beginning and end. If $r$ is the length of the first vertical segment of $P$, then there must exist some $i \geq r$ for which the distance between $P$ and $Q$ along the line $y=i$ is at most $1$ unit. The minimal such $i$ is called the return of $P$ and is denoted by $\ret(P,Q)$.
	Let $\eta \vdash n$ and let $\gamma$ be any nonempty partition. Denote by $\RP_{k,\eta}^\gamma$ the set of $\rho$-compatible polyominoes with a marked return, given by a quadruple $(P,Q,w,i)$, where
	\begin{enumerate}
		\item $w = (w_1,\dots, w_n) \in  (\mathbb{N}^{k+1})^n$, with $w_{10}\cdots w_{n0} \in R(\gamma,0^{n-\ell(\gamma)})$ and $w_{ij}>0$ for $j>0$,
		\item $(P,Q,w)$ is a $\rho$-compatible parallelogram polyomino of type $\eta$ with $P$ ending in two consecutive east steps, and
		\item $1 \leq i \leq \ret(P,Q)$.
	\end{enumerate}
\end{definition}

\begin{definition}
	\label{DeltaPowerBijection}
	Define the map $\varphi_{\circ}: \bSM^{\gamma}_\eta \rightarrow  \RP_{k,\eta}^\gamma$ using $\varphi$ from \Cref{section:weights} as follows. For $S=(S^1,\dots, S^\ell) \in \bSM^{\gamma}_\eta$,
	\begin{enumerate}
		\item if $c (S^1) = 0$, then set $\varphi_{\circ} (S) = \varphi(S)$, keep the circle from $S^1$ inscribed in its corresponding north step of $\varphi(S)$, as is associated by the map $\varphi$ from \Cref{proposition:phi};
		\item if $c(S^1) >0$, then there is a largest $j$ such that $c(S^j) = 0$, so set \[ \varphi_\circ (S) = \varphi(S^j,\dots, S^\ell, S^1, \dots, S^{j-1}),\] again keeping the circle from $S^1$ in its corresponding north step.
	\end{enumerate}
\end{definition}
For instance, to find the image under $\varphi_\circ$ for the last example, we first rearrange the parts to give
\begin{align*}
	                                                   &  &
	\begin{tikzpicture}[  scale=.5]
		\Yboxdim{1cm}
		\ylw
		\tyng(0cm,-3cm,1,1,1)
		\draw[line width = .5mm] (-.25,1-1) -- (1.25,1-1);
		\begin{large}
			\draw (.5,-.5) node {$2$};
			\draw (.5,-1.5) node {$1$};
			\draw (.5,-2.5) node {$1$};
		\end{large}
	\end{tikzpicture} &  &
	\begin{tikzpicture}[ scale=.5]
		\Yboxdim{1cm}
		\ygreen
		\tyng(0cm,0cm,3)
		\ylw
		\tyng(0cm,-3cm,3,3,3)
		\draw[line width = .5mm] (-.25,1-1) -- (3.25,1-1);
		\begin{large}
			\draw ( 1.5,  1.5) node {{\Circle}};
			\draw (.5,-.5) node { $1$};
			\draw (1.5,-.5) node { $0$};
			\draw (2.5,-.5) node { $3$};
			\draw (.5,-1.5) node { $1$};
			\draw (1.5,-1.5) node { $1$};
			\draw (2.5,-1.5) node { $3$};
			\draw (.5,-2.5) node { $1$};
			\draw (1.5,-2.5) node { $2$};
			\draw (2.5,-2.5) node { $3$};
		\end{large}
	\end{tikzpicture}
	                                                   &  &
	\begin{tikzpicture}[ scale=.5]
		\Yboxdim{1cm}
		\ylw
		\tyng(0cm,-3cm,1,1,1)
		\draw[line width = .5mm] (-.25,1-1) -- (1.25,1-1);
		\begin{large}
			\draw (.5,-.5) node {$1$};
			\draw (.5,-1.5) node {$1$};
			\draw (.5,-2.5) node {$1$};
		\end{large}
	\end{tikzpicture}
	                                                   &  &
	\begin{tikzpicture}[ scale=.5]
		\Yboxdim{1cm}
		\ygreen
		\tyng(0cm,0cm,4,4)
		\ylw
		\tyng(0cm,-3cm,4,4,4)
		\draw[line width = .5mm] (-.25,1-1) -- (4.25,1-1);
		\begin{large}
			\draw (.5,-.5) node {$0$};
			\draw (1.5,-.5) node {$2$};
			\draw (2.5,-.5) node {$1$};
			\draw (3.5,-.5) node {$0$};
			\draw (.5,-1.5) node {$3$};
			\draw (1.5,-1.5) node {$2$};
			\draw (2.5,-1.5) node {$1$};
			\draw (3.5,-1.5) node {$2$};
			\draw (.5,-2.5) node {$2$};
			\draw (1.5,-2.5) node {$2$};
			\draw (2.5,-2.5) node {$1$};
			\draw (3.5,-2.5) node {$3$};
		\end{large}
	\end{tikzpicture}
\end{align*}

Applying $\varphi$ then gives the selected polyomino of \Cref{figure:selected_polyomino}.
\begin{figure}[!ht]
	\begin{center}
		\includegraphics{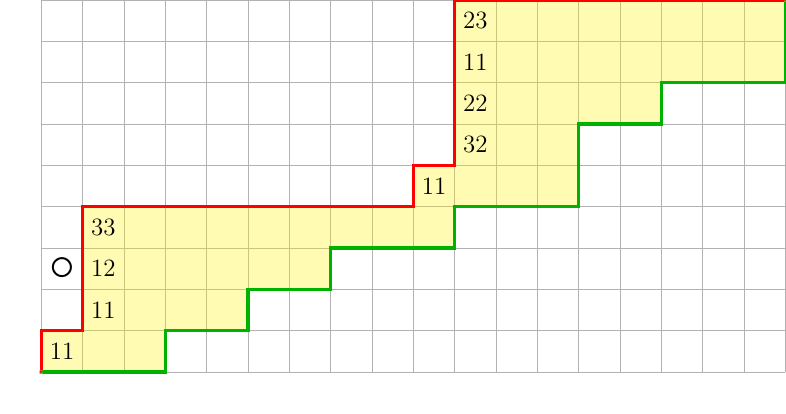}
	\end{center}
	\caption{A $\rho$-compatible labeled polyomino with a marked return in $\RP^{(3,2,2,1,1,1)}_{2,(4,3,1,1)}$ and with area $24$. The paths return when $y=4$.}
	\label{figure:selected_polyomino}
\end{figure}

Let $\RP^\gamma_k = \bigcup_\eta \RP^\gamma_{k,\eta}$, and let $\eta(L) = \eta$ if $L \in \RP^\gamma_{k,\eta}$.

\begin{proposition}
	\label{prop:power}
	The map $\varphi_{\circ}$ is a weight-preserving bijection. In particular, we have
	\[ \widehat{\nabla}_\ast^k \, \widehat{\Delta}_{m_\gamma} \omega(p_n) = \sum_{L \in \RP^\gamma_{k}} q^{\area(L)}  \XX^L \otimes e_{\eta(L)} \]
\end{proposition}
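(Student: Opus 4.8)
The plan is to obtain this by combining the sign-reversing involution $\psi_\circ$ of \magenta{Definition}~\ref{psicirc} with the $\varphi$-machinery of \autoref{section:weights}. By the fixed-point expansion already established for $\psi_\circ$, all signed contributions to $\widehat{\nabla}_{\YY}\, \widehat{\Delta}_{m_\gamma}(\omega\,p_n(\X))$ cancel except those of its fixed points, so that
\[
\widehat{\nabla}_{\YY}\, \widehat{\Delta}_{m_\gamma}(\omega\,p_n(\X)) = \sum_{\eta \vdash n} e_\eta(\X) \sum_{S \in \bSM^\gamma_\eta} \rho(S) .
\]
It therefore suffices to produce, for each $\eta$, a weight-preserving bijection $\varphi_\circ \colon \bSM^\gamma_\eta \to \RP^\gamma_{k,\eta}$: the stated identity then follows by summing $\rho(S) = q^{\area(\varphi_\circ(S))}\,\YY^{\,w}$ over $\bSM^\gamma_\eta$ and recalling that the type of $p$ equals $\eta$ for $p \in \RP^\gamma_{k,\eta}$ and that $\RP^\gamma_k = \bigsqcup_{\eta \vdash n}\RP^\gamma_{k,\eta}$.

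\textbf{Forward map.} Given $S = (S^1,\dots,S^\ell) \in \bSM^\gamma_\eta$, the cyclic rotation prescribed in \magenta{Definition}~\ref{DeltaPowerBijection} (trivial if $c(S^1)=0$, otherwise starting at the largest $j$ with $c(S^j)=0$) produces a sequence $\tilde S$ whose first component has $c=0$ and which satisfies the chain of ``cannot join'' inequalities $c(\tilde S_{m+1}) < c(\tilde S_m) + \D(\tilde S_m;\tilde S_{m+1})$ --- these are precisely $\ell-1$ of the $\ell$ cyclic inequalities defining $\bSM^\gamma_\eta$. Hence $\tilde S$ lies in the set $U_{W,\eta}$ of \autoref{section:weights}, where $W$ is the set of words in $(\mathbb{N}^{k+1})^\bullet$ with $0$-coordinates rearranging to $\gamma\,0^{\,n-\ell(\gamma)}$ and remaining coordinates positive, and $\rho$ is the ``looks right'' function built from $\rw$ on the $0$-th digit and from $\revmaj$ on the digits $1,\dots,k$. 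Applying $\varphi$ from \magenta{Proposition}~\ref{proposition:phi} to $\tilde S$ yields a $\rho$-compatible parallelogram polyomino $(P,Q)$ of type $\eta$ whose top path ends in two consecutive east-steps. The circle of $S$ sits in a north-step of the maximal vertical run of $P$ coming from $S^1$; let $i$ be the $y$-coordinate of its top, which we declare to be the marked return. To see that $1 \le i \le \ret(P,Q)$, recall from the computation inside the proof of \magenta{Proposition}~\ref{proposition:phi} that a return of $(P,Q)$ occurs at height $\Sigma_m$ exactly when the $c$-value of the $(m{+}1)$-st component of $\tilde S$ vanishes, and that $\ret(P,Q)$ is the least such height that is $\ge r$. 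If $c(S^1)=0$ the circle lies in the first run, so $i \le r \le \ret(P,Q)$; if $c(S^1)>0$, maximality of $j$ forces every component of $\tilde S$ from the second up to and including the one containing $S^1$ to have nonzero $c$-value, so no return occurs at or below the circled run and again $i \le \ret(P,Q)$. Thus $\varphi_\circ(S) \in \RP^\gamma_{k,\eta}$.

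\textbf{Inverse and weight.} Conversely, from $(P,Q,w,i) \in \RP^\gamma_{k,\eta}$ put $\tilde S = \varphi^{-1}(P,Q,w) \in U_{W,\eta}$, take the run containing the step into height $i$ as the component to be relabeled $S^1$ (with the circle in that step), and cyclically un-rotate $\tilde S$ so that $\tilde S_1$ occupies the position dictated by ``$\tilde S_1 = S^j$ with $j$ maximal such that $c(S^j)=0$''. One checks that $i \le \ret(P,Q)$ makes precisely the components after that position carry nonzero $c$-value, so $j$ is indeed maximal, and that the single cyclic inequality $0 < c(\tilde S_r) + \D(\tilde S_r;\tilde S_1)$ unused by $\varphi$ holds because $P$ ends in two east-steps (and, in the degenerate one-block case, because $\gamma \neq \varnothing$); this returns an element of $\bSM^\gamma_\eta$, and the two constructions are mutually inverse by construction. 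Since $\rho(S) = \prod_i \rho(S^i)$ is unchanged by the rotation and the circle/marked return carries no monomial weight, \magenta{Proposition}~\ref{proposition:phi} applied to $\tilde S$ gives $\rho(S) = q^{\area(\varphi_\circ(S))}\,\YY^{\,w}$, which is the asserted weight-preservation; the displayed formula follows.

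\textbf{Main obstacle.} The delicate part is the cyclic bookkeeping: isolating which one of the $\ell$ cyclic ``cannot join'' inequalities is discarded when passing to $U_{W,\eta}$, and matching its reinstatement with exactly the two features that cut $\RP^\gamma_{k,\eta}$ out of ``polyominoes carrying a circled north-step'', namely that the circle lies at or below the first return ($i \le \ret$) and that $P$ terminates in two east-steps, while verifying that the forward and inverse maps genuinely invert each other (this is where nonemptiness of $\gamma$ is used). Everything else is a direct transcription of the split/join and $\varphi$ constructions of \autoref{section:weights}, exactly as for \magenta{Proposition}~\ref{proposition:phi} and for $\psi_\circ$.
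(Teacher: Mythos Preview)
Your approach is the same as the paper's: rotate a fixed point $S\in\bSM^\gamma_\eta$ to start at the largest $j$ with $c(S^j)=0$, apply $\varphi$, and keep the circle as a marked return. The paper's proof is much more terse than yours---it takes the bijectivity, weight-preservation, and the marked-return bound $i\le \ret(P,Q)$ essentially for granted (via the $\varphi$-machinery) and spends its entire argument on the one point you assert without justification in your forward map: that $P$ ends in two consecutive east-steps. Their argument is that the unused cyclic inequality $c(S^j)<c(S^{j-1})+\D(S^{j-1};S^j)$ forces $c(S^{j-1})+\D(S^{j-1};S^j)>0$, and either summand being positive yields the two trailing east-steps. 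You clearly understand this correspondence (you invoke it in the inverse direction), so the only remark is that your sentence ``Applying $\varphi$ \dots\ yields a $\rho$-compatible parallelogram polyomino \dots\ whose top path ends in two consecutive east-steps'' needs exactly that one line of justification to be complete; otherwise your write-up is more thorough than the paper's own.
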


\begin{proof}[\bf Proof]
	The bijection $\varphi$ gives us an injection into a subset labeled parallelogram polyominoes with a marked return. The only condition we need to ensure is that if $\varphi_{\circ}(S)$ gives the polyomino $(P,Q)$, then $P$ ends with two east steps.
	If $S = (S^1,\dots, S^\ell)$ is sent to $\varphi_\circ (S) = \varphi(S^j,\dots, S^\ell, S^1, \dots, S^{j-1})$ with underlying polyomino $(P,Q)$, then we know that $c(S^j) = 0$ and $c(S^j) < c(S^{j-1}) + D(S^{j-1};S^j)$ (since no two consecutive $S^i$ can join). Thus $c(S^{j-1}) + D(S^{j-1};S^j) > c(S^j) = 0$, meaning one of these quantities is nonzero. If $D(S^{j-1};S^j) > 0$, then the last north-segment of $P$ must lie at least two units West of the last point of the path. If $c(S^{j-1}) >0$, then the number of cells between the last vertical segment of $P$ and the bottom path is at least $2$. This gives that in either case, $P$ must end with two east steps.
\end{proof}

We now apply the map $\iota$ of \Cref{def:iota} which sends $\rho$-compatible polyominoes with a marked return to multi-labeled $(\gamma,k^n)$-Dyck paths with a marked return, defined as follows.

\begin{definition}
	Let $\gamma$ be any nonempty partition.
	A $(\gamma,k^n)$-Dyck path is a polyomino $(P,Q)$ where $Q$ is a $(\gamma,k^n)$-staircase, meaning that
	$Q = EE^{a_1} N  \cdots E^{a_n} N$ with \[ (a_1-k,\dots, a_n-k) \in R(\gamma,0^{n-\ell(\gamma)}). \]
	Denote by $\RLD^\gamma_{k^n}$ the set of multi-labeled $(\gamma,k^n)$-Dyck paths with a marked return whose labels are given by words in $\mathbb{N}_+$ of length $k$. Recall that if $L \in \RLD^\gamma_{k^n}$ has labels given by $w(L)= (w_1,\dots, w_n)$, and $i$ is an $r$-descent, then the $i^{\ith}$  north step of the top path must be followed by at least $r$ east steps.
\end{definition}
The map $\iota$ can be applied to the set $\RP^{\gamma}_{k^n}$ as we did before, shifting both the path $P$ and $Q$ by the number of non-ascents in the labels.
\begin{proposition}
	For any $k$ and nonempty partition $\gamma$, we have
	\[ \widehat{\nabla}_\ast^k \,\widehat{\Delta}_{m_\gamma} \omega(p_n) = \sum_{L \in \RLD^\gamma_{k^n}} q^{\area(L)} \XX^L \otimes e_{\eta(L)} \]
\end{proposition}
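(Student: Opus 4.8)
The plan is to deduce this statement from \magenta{Proposition}~\ref{prop:power} by transporting the identity along the map $\iota$ of \magenta{Definition}~\ref{def:iota}, in exactly the way that was used to pass from $\PP_{W_\lambda,\eta}$ to $\LD_{k^n}(\lambda,\eta)$ in the $e_n$ case. By \magenta{Proposition}~\ref{prop:power} we already have
\[
\widehat{\nabla}_{\YY}\,\widehat{\Delta}_{m_\gamma}(\omega\,p_n(\X)) = \sum_{\eta\vdash n} e_\eta(\X) \sum_{p \in \RP^\gamma_k,\ \eta(p)=\eta} q^{\area(p)}\,\YY^p,
\]
where, by \magenta{Definition}~\ref{retPoly}, an element of $\RP^\gamma_k$ is a quadruple $(P,Q,w,i)$ consisting of a $\rho$-compatible parallelogram polyomino $(P,Q)$ of type $\eta$ with $P$ ending in two consecutive east-steps, a $W$-labeling $w$ whose $0$-th coordinates rearrange to $(\gamma,0^{n-\ell(\gamma)})$ and whose remaining coordinates are positive, and a marked return $1 \le i \le \ret(P,Q)$. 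So it is enough to exhibit a weight-preserving bijection $\RP^\gamma_k \leftrightarrow \RLD^\gamma_{k^n}$, where ``weight-preserving'' means preserving $\area$, the $\Y$-monomial $\YY^p$, and the $e$-composition $\eta$.

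First I would forget the marked return and apply $\iota$ to the underlying labeled polyomino $(P,Q,w)$. The proposition following \magenta{Definition}~\ref{def:iota} shows that $\iota$ is a weight-preserving bijection between multi-labeled $k^n$-Dyck paths of content $\lambda$ and $e$-composition $\eta$ and the $\rho$-compatible labeled polyominoes $\PP_{W_\lambda,\eta}$. Reading that argument with $W$ taken to be the set of words whose $0$-th coordinates rearrange to $(\gamma,0^{n-\ell(\gamma)})$ (so that the only change is that the fixed-content restriction on $w_{\star 0}$ becomes a rearrangement class, which does not touch the shift construction at all), one obtains in the same way a weight-preserving bijection between multi-labeled $(\gamma,k^n)$-Dyck paths of $e$-composition $\eta$ and the $\rho$-compatible labeled polyominoes of type $\eta$ built on the $(\gamma,k^n)$-staircase $Q = EE^{a_1}N\cdots E^{a_n}N$. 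Here the bottom path condition is exactly the requirement $(a_1-k,\dots,a_n-k)\in R(\gamma,0^{n-\ell(\gamma)})$ from the definition of a $(\gamma,k^n)$-Dyck path, which is automatic from the $\rho$-compatibility computation $\# \{ j \mid w_{ij}<w_{i+1,j}\}+\delta_{i,1} = \lwt(w,i)+\delta_{i,1}$ already carried out after \magenta{Definition}~\ref{def:iota}. Since $\iota$ removes, on each pair of adjacent horizontal lines, the same number $\#\{j : w_{ij}\ge w_{i+1,j}\}$ of east-steps from both $P$ and $Q$, it preserves $\area$ and the $\Y$-labels, and it matches $\eta$ with the lengths of the maximal vertical streaks of the resulting top path, exactly as in the original argument. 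The ``$P$ ends in two consecutive east-steps'' hypothesis is precisely what guarantees that after the shift the top path still has a trailing east-step, so that $\iota(P,Q)$ is a genuine polyomino lying (weakly) above $ky=x$ with a last north-step followed by at least one east-step.

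Second, I would carry the marked return along and check it transports correctly. Because $\iota$ deletes the same number of east-steps from $P$ on the line $y=i$ and from $Q$ on the line $y=i-1$, the horizontal gap between the two paths along each such pair of lines is unchanged; hence the smallest line on which $P$ and $Q$ lie within one unit is literally the same before and after the shift, i.e. $\ret(\iota(P,Q)) = \ret(P,Q)$. Therefore a mark $1\le i\le \ret(P,Q)$ corresponds bijectively to a mark $1 \le i \le \ret(\iota(P,Q))$, and $\iota$ (with the mark attached) is the desired weight-preserving bijection $\RP^\gamma_k \leftrightarrow \RLD^\gamma_{k^n}$. Substituting into the displayed form of \magenta{Proposition}~\ref{prop:power} then gives the claimed identity. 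The step I expect to be the main obstacle is this last bookkeeping around the marked return: one must be sure that the notion $\ret(P,Q)$ defined via the first vertical segment of $P$ is genuinely invariant under the $\iota$-shift in all edge cases (in particular that deleting east-steps cannot create a smaller line with gap $\le 1$), and that the ``two trailing east-steps'' condition survives the shift in the right place; everything else — area, the $\Y$-monomial, and the $e$-composition — is preserved for exactly the same reasons as in the unmarked $e_n$ case treated earlier.
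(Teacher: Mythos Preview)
Your approach is precisely the paper's: after \magenta{Proposition}~\ref{prop:power} the paper simply states that ``the map $\iota$ can be applied to the set $\RP^{\gamma}_{k^n}$ as we did before, shifting both the path $P$ and $Q$ by the number of non-ascents in the labels,'' and then records the proposition without further argument. Your more detailed account follows the same route, and you correctly isolate the transport of the marked return as the only delicate step; what the shift preserves is exactly the gap $x_Q(i)-x_P(i{+}1)$ along each line $y=i$, since removing $d_i$ east-steps from $P$ at height $i$ and from $Q$ at height $i-1$ shifts both $x_P(i{+}1)$ and $x_Q(i)$ by the same total $d_1+\cdots+d_i$.

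One sharpening of the caveat you raise: the gap itself is indeed invariant, so no ``new'' line with gap $\le 1$ can appear; the genuine edge case is that the lower bound $r$ in \magenta{Definition}~\ref{retPoly} (the length of the first vertical segment of $P$) can change, because deleting exactly $d_i$ east-steps at height $i$ may merge adjacent vertical segments of $P$. Thus $\ret(\iota(P,Q))=\ret(P,Q)$ does not follow from gap preservation alone. The paper is equally silent here; the natural resolution is to read the ``marked return'' for a $(\gamma,k^n)$-Dyck path as the first line along which the top path meets the staircase (i.e., the first $i$ with gap $1$), which is exactly the $\iota$-invariant quantity and is what the figure illustrates.
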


For instance, the previous example of a marked return polyomino maps to the multi-labeled $(\gamma,k^n)$-Dyck path with a marked return in \Cref{figure:gammaknDyckpath}.

\begin{figure}[!ht]
	\begin{center}
		\includegraphics{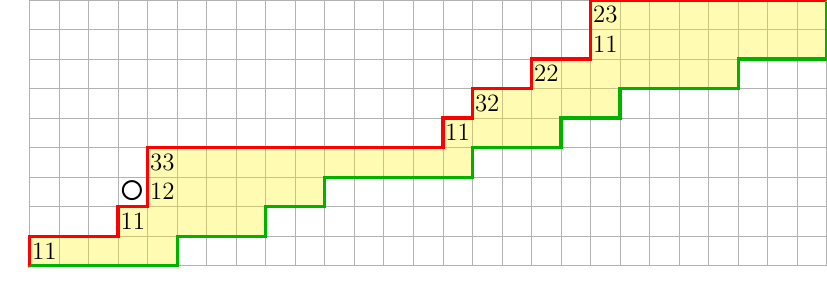}
	\end{center}
	\caption{A multi-labeled $(322111,2^9)$-Dyck path with a marked return.}
	\label{figure:gammaknDyckpath}
\end{figure}

\subsection{A Schur function expansion for applications to power sums}
\label{sub_SchurPower}

We can follow the same procedure outlined in the prior section, but instead use labels giving the Schur expansion; this corresponds to taking labels which form lattice words. We are immediately led to consider the following subset of multi-labeled $(\gamma, k^n)$-Dyck paths.

\begin{definition}
	Let $\gamma$ be a nonempty partition.
	The collection of lattice multi-labeled $(\gamma,k^n)$-Dyck paths with a marked return, denoted by $\RLLD^\gamma_{k^n}$, is the subset of elements $(P,Q,w,m) \in \RLD^{\gamma}_{k^n}$ such that
	\begin{enumerate}
		\item The labels $w_1,\dots, w_n$ are tuples of length $k$.
		\item The words $w_{\star i} = w_{1i}\cdots w_{ni}$ are lattice words. We denote the partition associated to this lattice word by $\lambda^i(w)$.
		\item The top path ends with two east steps.
		\item There is a marked row $m$ before the first return of the paths $(P,Q)$.
	\end{enumerate}
\end{definition}

We have the following.

\begin{proposition}
	For any $k$ and nonempty partition $\gamma$, \[ \widehat{\nabla}_\ast^k \widehat{\Delta}_{m_\gamma} \omega(p_n) = \sum_{L \in \RLLD^\gamma_{k^n}} q^{\area(L)} e_{\eta(L)} \otimes s_{\lambda^1(w(L))} \otimes \cdots \otimes s_{\lambda^k(w(L))}. \]
\end{proposition}

\subsection{Square Paths}
A combinatorial description of the effect of $\nabla$ on a power sum symmetric function is given by the Square Paths Conjecture of Loehr and Warrington \cite{Loehr-Warrington-square-2007}, proved by Emily Sergel in her thesis \cite{Leven-2016}. The conjecture states that we can write $\nabla \omega(p_n)$ as a sum over labeled square paths. We now define these structures and show that when $k=0$ and $\gamma = 1^n$ our results coincide to the Square Paths Theorem when $t=1$.

A square path $\pi \in \SP^E_n$ is a lattice path consisting of north steps and east steps from $(0,0)$ to $(n,n)$ that ends in an east step. The set of labeled square paths $\LSP^E_n$ is generated by placing positive integers along the  north steps of the square path so that the columns are increasing when read from bottom to top. The monomial of a labeled square path $L \in \LSP$ is given by $\X^L = x_1^{a_1} x_2^{a_2} \cdots$ where $a_i$ is the number of occurrences of the label $i$ in $L$. For example, the square path in \Cref{fig:square_paths} (left) has monomial weight $x_1 x_2^3 x_3 x_4^2 x_5$.

\begin{figure}[ht]
	\begin{tikzpicture}[scale=.6]
		\draw[step=1.0, gray!60, thin] (0,0) grid (8,8);
		\begin{scope}
			\clip (0,0) rectangle (8,8);
			\draw[gray!60, thin] (2,0) -- (8,6);
		\end{scope}
		\draw[blue!60, line width=2pt] (0,0) -- (0,1) -- (3,1) -- (3,4) --(6,4) --(6,8) --(8,8);
		\draw (.5,.5) node {$2$};
		\draw (3.5,1.5) node {$1$};
		\draw (3.5,2.5) node {$2$};
		\draw (3.5,3.5) node {$4$};
		\draw (6.5,7.5) node {$5$};
		\draw (6.5,4.5) node {$2$};
		\draw (6.5,5.5) node {$3$};
		\draw (6.5,6.5) node {$4$};
	\end{tikzpicture}
	\begin{tikzpicture}
		\draw[draw = none, use as bounding box] (0,0) rectangle (2,2);
	\end{tikzpicture}
	\begin{tikzpicture}[scale=.6]
		\draw[step=1.0, gray!60, thin] (0,0) grid (8,8);
		\begin{scope}
			\clip (0,0) rectangle (8,8);
			\draw[gray!60, thin] (0,0) -- (8,8);
		\end{scope}
		\draw[blue!60, line width=2pt] (0,0) -- (0,4) -- (2,4) -- (2,5) --(5,5) -- (5,8) --(8,8);
		\draw (2.5,4.5) node {$2$};
		\draw (5.5,5.5) node {$1$};
		\draw (5.5,6.5) node {$2$};
		\draw (5.5,7.5) node {$4$};
		\draw (.5,3.5) node {$5$};
		\draw (.5,.5) node {$2$};
		\draw (.5,1.5) node {$3$};
		\draw (.5,2.5) node {$4$};
		\draw (1.5,1.5) node {$\Circle$};
	\end{tikzpicture}
	\caption{A labelled square path (left) and its marked circular rearrangement (right)}
	\label{fig:square_paths}
\end{figure}
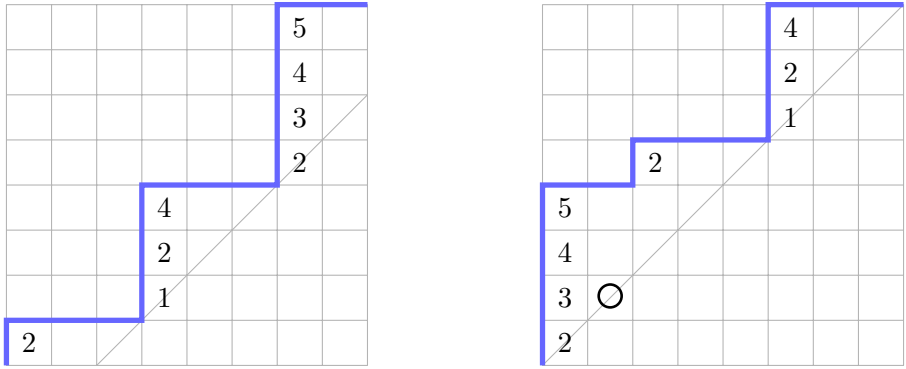

If the  north step on the lowest diagonal starts on diagonal $y=x-d$, then a north step on diagonal $y = x +a$ will contribute $a + d$ units of area. The polyomino in \Cref{figure:gammaknDyckpath} would then have area $11$. The number of diagonal inversions is calculated in a similar way to Dyck paths. Then the square paths theorem can be stated as \[ \nabla \omega(p_n) = \sum_{L \in \LSP^E_{n}} q^{\area(L)} t^{\dinv(L)} \X^L. \]

If we circularly rearrange the north steps and east steps of the square path above, so that the path begins with the right-most, lowest vertical segment, and we circle a cell on the main diagonal to mark the last column of the original square path, we get a marked Dyck path, as in \Cref{fig:square_paths} (right). This corresponds to marking a row before the Dyck path returns to the main diagonal. The area is simply the area of the Dyck path. Recall that \[ \left. \nabla \omega(p_n) \right\rvert_{t=1} = \widehat{\Delta}_{m_{1^n}} \omega(p_n) = \sum_{\mu \vdash n} e_\mu \sum_{L \in \RP_{\lambda,1^n}} t^{\area(L)}. \]

The first thing to note is that if $(P,Q) \in \RP_{\lambda,1^n}$, then the bottom $Q$ is completely determined: the path must travel from $(1,0)$ to $(n+1,n)$ with only north steps and east steps; and every east step must be followed by a  north step (since $\lambda = 1^n$). This means that $Q$ is the path which begins with an east step, then alternates ``east, north, east, north'', and so on. On the other hand $P$ cannot touch $Q$, so it must remain weakly above the line $y=x$ until the last step. Therefore, $P$ is a Dyck path. The return of the polyomino is equal to the return of $P$ as a Dyck path, i.e. the first time $P$ returns to the diagonal $y=x$. This equates the two interpretations when we set $t=1$.

Another application of our formulas is a proof of the Delta Square Conjecture \cite{DAdderio-Iraci-VandenWyngaerd-DeltaSquare-2019} when $t=1$. Here, there is a formula for $\Delta_{h_a} \Delta_{e_b} \omega(p_n)$ in terms of (partially) labeled square paths; our methods establish an interpretation for this symmetric function that agrees with the conjecture when $t=1$.

\section{Further directions}

\subsection{Representation theory}

In the special case when $k=2$, the expression $\nabla_\ast e_n$ appears to have positive coefficients in the Schur basis. The following conjecture has been checked by computer for $n \leq 9$.

\begin{conjecture}
	\label{conj:schur}
	The expression $\nabla_\ast e_n$ is $(s \otimes s)$-positive.
\end{conjecture}

If \Cref{conj:schur} holds, then it suggests that the expression $\nabla_\ast e_n$ has some representation-theoretical meaning; it would be interesting to find such an interpretation. The $(s \otimes s)$-expansion for $\nabla_\ast e_4$ is given in \Cref{table:nabla_e4}.

\begin{table}[ht]
    \resizebox{\textwidth}{!}{%
	\centering
	\renewcommand{\arraystretch}{1.5}
    \begin{tabular}{|c|c|c|}
		\hline
        $\mathbf{\mu}$ & $\mathbf{\nu}$ & $\textbf{Coefficient of } \mathbf{s_\mu \otimes s_\nu}$ \\
        \hline
        \hline
        $(1, 1, 1, 1)$ & $(1, 1, 1, 1)$ & $\makecell{q^{6} + q^{5} t + q^{4} t^{2} + q^{3} t^{3} + q^{2} t^{4} + q t^{5} + t^{6} + q^{4} t + q^{3} t^{2} + q^{2} t^{3} + q t^{4} + q^{3} t + q^{2} t^{2} + q t^{3}}$ \\
        \hline
        $(2, 1, 1)$ & $(1, 1, 1, 1)$ & $\makecell{q^{5} + q^{4} t + q^{3} t^{2} + q^{2} t^{3} + q t^{4} + t^{5} + q^{4} + 2 q^{3} t + 2 q^{2} t^{2} + 2 q t^{3} + t^{4} + q^{3} + 2 q^{2} t + 2 q t^{2} + t^{3} + q t}$ \\
        \hline
        $(3, 1)$ & $(1, 1, 1, 1)$ & $\makecell{q^{3} + q^{2} t + q t^{2} + t^{3} + q^{2} + q t + t^{2} + q + t}$ \\
        \hline
        $(4)$ & $(1, 1, 1, 1)$ & $\makecell{1}$ \\
        \hline
        $(2, 2)$ & $(1, 1, 1, 1)$ & $\makecell{q^{4} + q^{3} t + q^{2} t^{2} + q t^{3} + t^{4} + q^{2} t + q t^{2} + q^{2} + q t + t^{2}}$ \\
        \hline
        $(1, 1, 1, 1)$ & $(2, 1, 1)$ & $\makecell{q^{5} + q^{4} t + q^{3} t^{2} + q^{2} t^{3} + q t^{4} + t^{5} + q^{4} + 2 q^{3} t + 2 q^{2} t^{2} + 2 q t^{3} + t^{4} + q^{3} + 2 q^{2} t + 2 q t^{2} + t^{3} + q t}$ \\
        \hline
        $(2, 1, 1)$ & $(2, 1, 1)$ & $\makecell{q^{4} + q^{3} t + q^{2} t^{2} + q t^{3} + t^{4} + 2 q^{3} + 3 q^{2} t + 3 q t^{2} + 2 t^{3} + 2 q^{2} + 3 q t + 2 t^{2} + q + t}$ \\
        \hline
        $(3, 1)$ & $(2, 1, 1)$ & $\makecell{q^{2} + q t + t^{2} + q + t + 1}$ \\
        \hline
        $(2, 2)$ & $(2, 1, 1)$ & $\makecell{q^{3} + q^{2} t + q t^{2} + t^{3} + q^{2} + 2 q t + t^{2} + q + t}$ \\
        \hline
        $(1, 1, 1, 1)$ & $(2, 2)$ & $\makecell{q^{4} + q^{3} t + q^{2} t^{2} + q t^{3} + t^{4} + q^{2} t + q t^{2} + q^{2} + q t + t^{2}}$ \\
        \hline
        $(2, 1, 1)$ & $(2, 2)$ & $\makecell{q^{3} + q^{2} t + q t^{2} + t^{3} + q^{2} + 2 q t + t^{2} + q + t}$ \\
        \hline
        $(3, 1)$ & $(2, 2)$ & $\makecell{q + t}$ \\
        \hline
        $(2, 2)$ & $(2, 2)$ & $\makecell{q^{2} + q t + t^{2} + 1}$ \\
        \hline
        $(1, 1, 1, 1)$ & $(3, 1)$ & $\makecell{q^{3} + q^{2} t + q t^{2} + t^{3} + q^{2} + q t + t^{2} + q + t}$ \\
        \hline
        $(2, 1, 1)$ & $(3, 1)$ & $\makecell{q^{2} + q t + t^{2} + q + t + 1}$ \\
        \hline
        $(2, 2)$ & $(3, 1)$ & $\makecell{q + t}$ \\
        \hline
        $(1, 1, 1, 1)$ & $(4)$ & $\makecell{1}$ \\
        \hline
    \end{tabular}
	}
    \caption{The $(s \otimes s)$-expansion of $\nabla_\ast e_4$.}
	\label{table:nabla_e4}
\end{table}

It is worth mentioning that $s^{\otimes k}$-positivity fails when $k > 2$. Indeed, the coefficient of $s_{42} \otimes s_{42} \otimes s_{33}$ in $\nabla_\ast^2 e_6$ fails to be $(q,t)$-positive, as its degree $0$ coefficient is $-1$. Once we specialize $t=1$ we recover $q$-positivity, in accordance with \Cref{thm:schur-expansion}.

\subsection{Monomial expansion}

Although we give explicit combinatorial models in the $t=1$ case for several general applications of $\nabla_\ast$ operators, we have no such
combinatorial descriptions for the general setup of $(q,t)$ parameters. Indeed, computer experiments suggest the following conjecture, checked for $k, n \leq 6$.

\begin{conjecture}
	The expression $\nabla_\ast^{k-1} e_n$ is $m^{\otimes k}$-positive.
\end{conjecture}

This suggests the existance of a $t$-statistic on multilabeled $kn \times n$-Dyck paths that refines \Cref{thm:monomial-expansion} to the following identity.

\begin{problem}
	\label{problem:tstat}
	Find a statistic $\mathsf{tstat} \colon \LD_{k^n} \rightarrow \LD_{k^n}$ such that
	\begin{equation}
		\nabla_\ast^k e_n = \sum_{(\pi,w) \in \LD_{k^n}} q^{\area(\pi)} t^{\mathsf{tstat}(\pi, w)} \XX^w.
	\end{equation}
\end{problem}

It is possible that such descriptions may be found exploiting the work of Carlsson and Mellit (see \cites{Mellit-Poincare, Carlsson-Mellit-Nabla}). Building on an expansion of Mellit for
\begin{align}
	\label{Mellit-Poincare}
	\sum_{n=0}^{\infty} \sum_{\mu \vdash n} \frac {\Ht _\mu(\X_0) \cdots \Ht _\mu(\X_{k})}{w_\mu},
\end{align}
Carlsson and Mellit \cite{Carlsson-Mellit-Nabla} obtain a new proof of the shuffle theorem for $\nabla^k e_n$. In their proof, they specialize $\X_0$ to $M$, and take the coefficient of $s_{1^n}(\X_i)$ for $1 \leq i \leq k$ to reduce \Cref{Mellit-Poincare} to $\sum_{n=0}^{\infty} \nabla^k e_n$.  With this specialization, they show that the infinite series involves nilpotent endomorphisms which allows a further reduction to a finite series of monomials in $\X$, with the correct positive $q,t$-weights.

An interesting byproduct of their method is that the statistic $\text{dinv}$ pops out naturally, without having to know the shuffle conjecture beforehand. Further study may yield a refinement of their argument giving a positive monomial expansion for $\nabla_\ast^k e_n$ and a solution to \Cref{problem:tstat}.

\section*{Acknowledgements}

A. Iraci acknowledges the MIUR Excellence Department Project awarded to the Department of Mathematics, University of Pisa, CUP I57G22000700001.

M. Romero was partially supported by the NSF Mathematical Sciences Postdoctoral Research Fellowship DMS-1902731.

\bibliographystyle{amsalpha}
\bibliography{bibliography}

\end{document}